\numberwithin{equation}{section}
\newtheorem{thm}{Theorem}[section]
\newtheorem{lem}[thm]{Lemma}
\newtheorem{assume}[thm]{Assumption}
\theoremstyle{definition}
\newtheorem{example}[thm]{Example}
\theoremstyle{definition}
\theoremstyle{definition}
\newtheorem{defn}[thm]{Definition}
\journal{arxiv}
\renewcommand{\leq}{\leqslant}
\renewcommand{\geq}{\geqslant}
\newcommand{\cC}{C}
\newcommand{\PC}{PC}
\newcommand{\R}{\mathbb{R}}
\newcommand{\cone}{\mathop \Sigma _{\Delta}}
\newcommand{\wedg}{\mathop \Sigma_{w}}
\newcommand{\cusp}{\mathop \Sigma_{c}}
\newcommand{\FigureCaption}{\caption}
\newcommand{\TableCaption}{\caption}
\renewcommand{\phi}{\varphi}
\renewcommand{\epsilon}{\varepsilon}
\begin{document}

\begin{frontmatter}

\title{Lyapunov-Razumikhin techniques for state-dependent delay differential equations}

\author[McGill]{A.R. Humphries}
\ead{tony.humphries@mcgill.ca}
\author[Queens]{F.M.G. Magpantay}
\ead{felicia.magpantay@queensu.ca}

\address[McGill]{Department of Mathematics and Statistics, McGill University, 805 Sherbrooke St. W., Montreal, QC, Canada H3A 0B9}
\address[Queens]{Department of Mathematics and Statistics, Queen's University, 48 University Avenue,
Kingston, ON, Canada K7L 3N6}

\begin{abstract}
We present Lyapunov stability and asymptotic stability theorems for steady state solutions of general state-dependent delay differential equations (DDEs) using Lyapunov-Razumikhin methods.
Our results apply to DDEs with multiple discrete state-dependent delays, which may be nonautonomous for the
Lyapunov stability result, but autonomous (or periodically forced) for the asymptotic stability result.
Our main technique is to replace the DDE by a nonautonomous ordinary differential equation (ODE) where the
delayed terms become source terms in the ODE. The asymptotic stability result and
its proof are entirely new, and based on a contradiction argument together with the Arzel\`a-Ascoli theorem.
This approach alleviates the need to construct auxiliary functions to ensure the asymptotic contraction,
which is a feature of all other
Lyapunov-Razumikhin asymptotic stability results of which we are aware.

We apply our results to a state-dependent
model equation which includes Hayes equation as a special case, to directly establish asymptotic stability
in parts of the stability domain along with lower bounds on the size of the basin of attraction.
\end{abstract}

\begin{keyword}
delay differential equations \sep asymptotic stability \sep Lyapunov-Razumikhin theorem
\end{keyword}
\end{frontmatter}

\section{Introduction}
\label{Sec:Intro}
We consider the following general delay differential equation (DDE) in $d$ dimensions with $N$ discrete state-dependent delays,
\begin{equation}
\left\{ \begin{array}{ll}
   \dot u(t) = f\bigl( t,u(t),u(t-\tau_1(t,u(t))),\ldots, u(t-\tau_N(t,u(t))) \bigr), & t\geqslant t_0,  \\
   u(t) = \varphi(t), & t\leqslant t_0,
 \end{array}  \right. \label{Eq:GeneralMultipleDelay}
\end{equation}
and prove Lyapunov stability and asymptotic stability results using Lyapunov-Razumikhin techniques. We apply
our results to the model state-dependent DDE
\begin{equation}
\left\{ \begin{array}{ll}
\dot u(t)=\mu u(t)+\sigma u(t-a-cu(t)), & t\geq 0,\\
u(t) = \varphi(t), & t\leqslant 0,
\end{array} \right.
\label{Eq:1Delay}
\end{equation}
with $a>0$, as an example of \eqref{Eq:GeneralMultipleDelay}, to 
directly show asymptotic stability of the trivial solution in
parts of the stability domain, and derive  bounds on the basin of attraction. 

Differential equations with state-dependent delays arise in many applications including
milling \cite{IST07}, control theory \cite{W03}, haematopoiesis \cite{CHM16} and economics \cite{Mackey89}.
There is a well-established theory for retarded
functional differential equations (RFDEs) as infinite-dimensional dynamical systems on function spaces
\cite{DGVLW95,GW13,HaleLunel:1}, which encompasses problems with constant or prescribed delay,
but very little of this theory is directly applicable to state-dependent
delay problems. Extending the theory to state-dependent DDEs, including equations of the
form \eqref{Eq:GeneralMultipleDelay} is the subject of ongoing study.
See \cite{HartungKrisztinWaltherWu:1} for further examples and
a review of progress.

The model state-dependent DDE \eqref{Eq:1Delay} includes the constant delay DDE sometimes known as Hayes equation
\cite{Hayes50} as a special case when $c=0$.
Hayes equation is a standard model problem used to illustrate
stability theory for constant delay DDEs in most texts on the subject
including \cite{HaleLunel:1,IS11,Smith:1}, as well as being a standard numerical analysis test problem \cite{BellenZennaro:1}.
Hayes equation is also used to illustrate Lyapunov-Razumikhin stability results in
\cite{Barnea:1,HaleLunel:1,Myshkis:1}.

The state-dependent DDE \eqref{Eq:1Delay} was introduced by Mallet-Paret and Nussbaum, and is a natural
generalisation of Hayes equation to a state-dependent DDE with a single delay which is linearly state-dependent.
But whereas Hayes equation is linear, the DDE \eqref{Eq:1Delay} is nonlinear and can admit limit cycles.
Mallet-Paret and Nussbaum
investigate the existence and form of the slowly oscillating periodic solutions of a singularly
perturbed version of \eqref{Eq:1Delay} in detail in \cite{MalletParetNussbaum:4} and use it as an illustrative
example for more general problems in \cite{MalletParetNussbaum:1, MalletParetNussbaum:2, MalletParetNussbaum:3}.
This DDE is also studied in \cite{CHK:1,HBCHM:1,Humphries:1,KE:1,MH:1}.

To study the stability of steady states of RFDEs,
Krasovskii \cite{Krasovskii:1} extended
the method of Lyapunov functions for ODEs to Lyapunov functionals
for RFDEs.
Lyapunov theorems for 
RFDEs require the time derivative of the
functional 
to be nonpositive or strictly negative,
similar to the theorems for ODEs using Lyapunov functions \cite{HaleLunel:1, Krasovskii:1}. However,
finding functionals 
with this property for RFDEs is much harder than in the ODE
case. 
Razumikhin \cite{Razumikhin:1} developed the theory on how one might go from using the more difficult Lyapunov functionals 
back to Lyapunov functions again. His fundamental idea is that it is only necessary to require a constraint on the derivative of the Lyapunov function 
whenever  the solution is about to exit a ball centered at the steady state.
Following this approach,
Barnea \cite{Barnea:1} presents a Lyapunov stability theorem for RFDEs
and also considers Hayes equation.
A comprehensive discussion of Lyapunov functionals and functions for general RFDEs is
presented by Hale and Verduyn Lunel in chapter 5 of \cite{HaleLunel:1}.
Other works with Razumikhin-type results include
\cite{Hale:1,IvanovLizTrofimchuk:1,Kato:1,Krasovskii:1,Krisztin:1,Krisztin:3,Myshkis:1,Yorke:1}.
Of these \cite{Krisztin:3,Myshkis:1,Yorke:1} include results tailored for time-dependent delays.

Although state-dependent DDEs can be formulated as RFDEs, results tailored for RFDEs often do not apply directly to state-dependent DDEs. For example, Barnea \cite{Barnea:1} only establishes Lyapunov stability and to do so
assumes a Lipschitz condition on a Banach space of continuous functions, which is well-known not to hold
for general state-dependent DDEs of the form \eqref{Eq:GeneralMultipleDelay} \cite{HartungKrisztinWaltherWu:1}, and is
easily shown not to hold for \eqref{Eq:1Delay}. Other authors, such as
Hale and Verduyn Lunel \cite{HaleLunel:1}, make the weaker continuity assumptions,
but use auxiliary functions  to establish Lyapunov stability and uniform asymptotic stability.
The construction of such functions is nontrivial in all but the simplest examples,
and we have not seen them constructed for a state-dependent DDE.

Rather than try to circumvent the problems that arise with RFDEs,
in Section~\ref{Sec:Extension} we will develop new proofs of Lyapunov stability and asymptotic stability
for the state-dependent DDE \eqref{Eq:GeneralMultipleDelay} with $N$ discrete delays using
Lyapunov-Razumikhin techniques.
In Assumption~\ref{Assume:DDE} we state the assumptions that we make on the
nonautonomous DDE \eqref{Eq:GeneralMultipleDelay} with $N$ (state-dependent) delays, the main ones being that
$f$ is locally Lipschitz with respect to its arguments in $\R^d$ and the delays
are locally bounded near the steady state.
Then in Theorem~\ref{Thm:Lyapunov_NA} we provide sufficient conditions for Lyapunov
stability of a steady state of the DDE. The main idea behind the proof is
the conversion of the DDE into an auxiliary ODE problem where the delayed terms are regarded as source terms.
In Theorem~\ref{Thm:Technique} we establish asymptotic stability of the steady state when the
DDE \eqref{Eq:GeneralMultipleDelay} is autonomous. For simplicity of exposition we present the proof for the case
of a single delay, but the result remains true for multiple delays or periodically forced problems.
This result is significantly different to previous Lyapunov-Razumikhin asymptotic stability results
which require auxiliary functions to
establish uniform asymptotic stability. 
In contrast, Theorem~\ref{Thm:Technique} does not require the construction of any auxiliary functions,
and is proved
using the auxiliary ODE by a contradiction argument, which shows there cannot exist a solution which is
not asymptotic to the steady state.

Theorems~\ref{Thm:Lyapunov_NA} and~\ref{Thm:Technique} establish Lyapunov stability and asymptotic stability when the
solutions of the auxiliary ODE have certain properties, but to determine those properties exactly would require
the solutions of the DDE. So, in Section~\ref{Sec:Extension}, we also show  how to define a family of
ODE problems that are subject to constraints defined by bounds on the DDE solution and its derivatives,
which can be determined without solving the DDE.
Lemma~\ref{Lem:FiniteBoundI} establishes bounds on the growth of solutions to \eqref{Eq:GeneralMultipleDelay}
which are used to ensure solutions remain bounded for sufficiently long ($k$ times the largest delay for
some integer $k$) to acquire $k$ bounded derivatives.
Stability is then established from the solution properties of this family of constrained ODE problems.




In Section~\ref{Sec:ModelEquation} we review the stability region of the steady state of the model equation~\eqref{Eq:1Delay} which is known \cite{GyoriHartung:1} to be the same for the state-dependent ($c\ne0$) and constant delay cases ($c=0$). We also
consider the properties of the auxiliary ODE we define for this problem, and define sets and functions that are required in the following sections to apply our Lyapunov-Razumikhin results.

In Section~\ref{Sec:Stability:k=123}
we apply Theorem~\ref{Thm:Technique} to provide a proof of asymptotic stability of the steady state of~\eqref{Eq:1Delay}
in subsets of the known stability region, together with
lower bounds on the size of the basin of attraction.
This result is given as Theorem~\ref{Thm:StabilityDDE:Razumikhin_k=123}.
Since the delayed inputs to the auxiliary ODE have $k-1$ bounded derivatives, for the
$k=2$ and $k=3$ results we establish
suitable bounds on the first and second derivatives of the ODE source terms, while the
$k=1$ result, does not require any differentiability of these terms.

The expressions for the stability regions derived
in Sections~\ref{Sec:Stability:k=123}
all involve a term that needs to be maximized over a closed interval (see the definitions of the $P(\delta,c,k)$ functions
in Definition~\ref{Defn:StabilityDDE:etaIP}).
For $k=1$ 
this maximum is readily evaluated, while for $k=2$
an expression for the maximum is established in Theorem~\ref{Thm:StabilityDDE:simplify} (whose proof is
given in Appendix~\ref{Appendix:Explicit}).
Plots and measurements of the derived asymptotic stability regions in $(\mu,\sigma)$ parameter space are given in Section~\ref{Sec:StabilityDDE:Measurements}, where it is seen that these
regions grow with the integer $k$,
but do not appear to fill out the entire stability region in the case $\mu\ne0$.
In Section~\ref{Sec:StabilityDDE:Measurements},
we also briefly review previous work  on the
$c=0$ constant delay case of \eqref{Eq:1Delay} with $\mu=0$ and $\mu\ne0$
and point out an error in the results of Barnea \cite{Barnea:1}.

In Section~\ref{Sec:Basins} we present two examples of solutions which are not asymptotic to the steady state
of the model equation~\eqref{Eq:1Delay}
when $\mu\geq0>\sigma$, and which hence give upper bounds on the radius of the largest ball contained in the
basin of attraction of the steady state. We compare these with the
lower bounds on the basin of attraction given by
Theorem~\ref{Thm:StabilityDDE:Razumikhin_k=123} for $k=1$, $2$ and $3$.
In Section~\ref{Conclusions} we present brief conclusions, and compare and contrast our approach with linearization.

\section{Lyapunov-Razumikhin techniques for state-dependent DDEs}
\label{Sec:Extension}
Here we state and prove our main theorems to establish the Lyapunov stability and asymptotic stability of
steady state solutions to state-dependent DDEs of the form \eqref{Eq:GeneralMultipleDelay}.

Let $\mathbb{R}^d$ be the $d$-dimensional
linear vector space over the real numbers equipped with the Euclidean inner product
$\cdot$ and Euclidean norm $|\cdot|$.
We denote by $B(0,\delta)$ the closed ball centred at zero with radius $\delta>0$ in $\mathbb{R}^d$.

\begin{defn} \label{Defn:r}
For any $\delta>0$ define
\begin{equation} \label{eq:r}
r(\delta)=\sup_{\begin{subarray}{c}t\geqslant t_0,|u|\leqslant \delta,\\i=1,...,N\end{subarray}} \tau_i(t,u).
\end{equation}
\end{defn}

Let $\delta_0>0$, and let $C=C\bigl([t_0-r(\delta_0),t_0],\mathbb{R}^d\bigr)$
be the Banach space of continuous functions mapping $[t_0-r(\delta_0),t_0]$
to $\mathbb{R}^d$ with the supremum norm denoted $\|\cdot\|$.
We will consider continuous initial functions $\phi\in C$,
and by a solution of \eqref{Eq:GeneralMultipleDelay} we mean a function $u\in C^1([t_0,t_f),\R^d)$ which satisfies
\eqref{Eq:GeneralMultipleDelay} for $t\in[t_0,t_f)$ with $t_f>t_0$ and $u(t_0)=\phi(t_0)$.
We will assume that \eqref{Eq:GeneralMultipleDelay} has a steady state at $u=0$ and make the following assumptions throughout.

\begin{assume}
\label{Assume:DDE}
Let $d$, $N$ and $k\in \mathbb{Z}$, $d\geqslant 1$, $N\geqslant 1$, $k\geqslant 1$ and $t_0\in\mathbb{R}$.
\begin{enumerate}
\item
$f:[t_0,\infty)\times \mathbb{R}^{(N+1)d} \to \mathbb{R}^d$ and $\tau_i:[t_0,\infty)\times \mathbb{R}^d \to \mathbb{R}$ for $i=1,\dotsc,N$ are continuous functions of their variables.
\item
$f(t,0,0,\dotsc,0)=0$ for all $t\geqslant t_0$.
\item
The constant $\tau_{\text{max}}=\max_{i=1,...,N}\sup_{t\geq t_0}\tau_i(t,0)$ satisfies $\tau_{\text{max}}\in(0,\infty)$.
\item
There exist positive constants $L_0$, $L_1,\dotsc,L_N$ and $\delta_{0}$ such that
$$\bigl|f(t,u,v_1,\dotsc,v_N)-f(t,\tilde u,\tilde v_1,\dotsc,\tilde v_N)\bigr|
\leqslant L_0|u-\tilde u| + L_1 |v_1-\tilde v_1|+\dots+L_N|v_N-\tilde v_N|$$
for all $t\geqslant t_0$ and $u,v_1,\dotsc,v_N,\tilde u, \tilde v_1,\dotsc,\tilde v_N \in
B(0,\delta_0)$. Let $L=L_0+L_1+\dots+L_N$.
\item
The delay terms $\tau_i(t,u)$ are nonnegative and Lipschitz continuous in $u$ on
$[t_0,\infty)\times B(0,\delta_0)$ with Lipschitz constants $L_{\tau_i}$.
\item
$f(t,u,v_1,\dotsc,v_N)$ is at least $\max\{k-2,0\}$ times differentiable in its $u$ and $v$ variables, and $\tau_i(t,u)$ is at least $\max\{k-1,0\}$ times differentiable in $u$.
\item
$\phi\in C:=C\bigl([t_0-r(\delta_0),t_0],\mathbb{R}^d\bigr)$.
\end{enumerate}
\end{assume}

Notice that Assumption~\ref{Assume:DDE} item 4 fixes our choice of $\delta_0>0$.
Now, for $\delta\in(0,\delta_0]$, Assumption~\ref{Assume:DDE}, items 3 and 5 imply $0<\tau_\text{max}\leq r(\delta)\leq \tau_{\max}+\delta\max_iL_{\tau_i}<\infty$
with $\lim_{\delta\to0}r(\delta)=\tau_\text{max}>0$.
In contrast, the bound on the delay terms used in RFDE theory by Hale and Verduyn Lunel \cite{HaleLunel:1} is a global bound on $\tau_i(t,u)$ over all $t$ and $u$.

The Lipschitz continuity conditions in Assumption~\ref{Assume:DDE}, items 4--5 are based on those
of Driver \cite{Driver:1}. From the results in \cite{Driver:1},
these conditions ensure local existence
and uniqueness of a solution to \eqref{Eq:GeneralMultipleDelay} close to the steady
state solution for all sufficiently small Lipschitz continuous initial functions $\phi$, and
existence of a solution for all sufficiently small continuous initial functions $\phi$.

We will prove Lyapunov stability and asymptotic stability by contradiction, by showing that
there is no
solution of \eqref{Eq:GeneralMultipleDelay} that does not have the required stability property.
Thus we do not need to make assumptions 
specifically to ensure existence or uniqueness
of solutions. Rather, we make sufficient assumptions so solutions that do exist have the properties
we require.

Since with state-dependent delays it can be hard ensure \emph{a priori} that the delays $\tau_j(t,u(t))$ are strictly positive along solutions, and problems with vanishing delays can be interesting, we do \emph{not} assume that delays $\tau_j(t,u)$ are strictly positive in Assumption~\ref{Assume:DDE}. This makes our results more widely applicable, allowing us to deal with state-dependent DDEs without having to first impose or prove that the delays do not vanish along solutions. The possibility of vanishing delays will on the other hand complicate some of our proofs.
Results similar to
Lemmas~\ref{Lem:FiniteBoundI} and~\ref{Lem:FiniteBoundII} below
are well known for time-varying and nonvanishing delays and proved using a Gronwall lemma and the method of steps; however the method of steps is not applicable with vanishing delays, and we will need a more technical proof.

The following lemmas provide bounds on the growth of solutions to \eqref{Eq:GeneralMultipleDelay}. For initial functions $\phi\in C$ we will use
Lemma~\ref{Lem:FiniteBoundI}
to ensure that solutions $u(t)$
remain bounded for a sufficiently long time interval so that $u(t)$ acquires the differentiability
that we require.

\begin{lem}
\label{Lem:FiniteBoundI}
Suppose that Assumption~\ref{Assume:DDE} is satisfied.
Let $T>t_0$ and $\delta\in(0,\delta_0]$. Let $|\varphi(t)|\leqslant \delta e^{L(t_0-T)}$ for $t\leqslant t_0$. Then any solution of \eqref{Eq:GeneralMultipleDelay}
defined for $t\in[t_0,T]$ satisfies
$$|u(t)|\leqslant \delta e^{L(t-T)}\leqslant \delta
\quad \forall t\in[t_0,T].$$
\end{lem}

\begin{proof}
Let $w(t)=\sup_{s\leq t}|u(s)|$, then for $t\in[t_0,T]$
\begin{align*}
\dot w(t)
& = \left\{\begin{array}{ll}
0 & \text{if }|u(t)|<w(t)\text{ or }\frac{d}{dt}|u(t)|<0,\\
\displaystyle\frac{d}{dt}|u(t)| & \text{otherwise,}
\end{array}\right. \\
& \leq |\dot u(t)|
=\bigl|f\bigl(t,u(t),u(t-\tau_1(t,u(t))),\dotsc,u(t-\tau_N(t,u(t)))\bigr)\bigr|.
\end{align*}
But
Assumption~\ref{Assume:DDE}, items 2 and 4 imply that
$$\bigl|f(t,u,v_1,\dotsc,v_N)\bigr|\leqslant L_0|u| + L_1 |v_1|+\dots+L_N|v_N|,$$
while
$t-\tau_j(t,u(t))\leqslant t$ implies $|u(t-\tau_j(t,u(t)))|\leqslant w(t)$. Hence
\begin{align*}
\dot w(t) &
\leqslant L_0 |u(t)|+L_1|u(t-\tau_1(t,u(t)))|+\dotsc+ L_N|u(t-\tau_N(t,u(t)))| \\
& \leqslant (L_0+L_1+\dotsc +L_N)w(t)=L w(t).
\end{align*}
The result follows easily.
\end{proof}

Lemma~\ref{Lem:FiniteBoundII} below, which gives a bound on that rate that nearby trajectories diverge from one another,
will be essential in the proof of asymptotic stability
in Theorem~\ref{Thm:Technique}. Winston's example \cite{Winston74} of nonuniqueness of solutions for a continuous but non-Lipschitz initial function $\phi$, would provide a counter-example to the lemma if the Lipschitz continuity requirement on the initial function $\phi$ were relaxed. Although we require $\phi$ to be Lipschitz in Lemma~\ref{Lem:FiniteBoundII}, when we consider stability
for the state-dependent DDE \eqref{Eq:GeneralMultipleDelay} we will not impose a Lipschitz condition on $\phi$, but instead apply Lemma~\ref{Lem:FiniteBoundI} to ensure that the solution remains bounded for sufficiently long to acquire the required smoothness.

\begin{lem}
\label{Lem:FiniteBoundII}
Suppose that Assumption~\ref{Assume:DDE} is satisfied.
Let $T>t_0$ and $\epsilon>0$.
Let $\delta\in(0,\delta_0]$
and
$L_{\delta}=(1+\delta\sum_{j=1}^NL_jL_{\tau_j})L$.
Suppose that $u(t)$ and $\tilde u(t)$ are solutions to \eqref{Eq:GeneralMultipleDelay} with initial functions $\varphi$ and $\tilde\varphi\in C$ respectively,
where both $|\varphi(t)|\leq\delta$
and $|\tilde\varphi(t)|\leq\delta$ for $t\leq t_0$,
and $|u(t)|\leq\delta$ and $|\tilde u(t)|\leq\delta$ for all $t\in[t_0,T]$.
Let $\phi$ be Lipschitz with Lipschitz constant $L\delta$, and suppose
$|\varphi(t)-\tilde\varphi(t)|\leqslant\epsilon$ for $t\leqslant t_0$.
Then
\begin{equation} \label{eq:IIbd}
|u(t)-\tilde u(t)|\leqslant \epsilon e^{L_{\delta}(t-t_0)} \quad\forall t\in[t_0,T].
\end{equation}
\end{lem}

\begin{proof}
The proof is quite similar to the proof of the previous lemma, but care must be taken with the state-dependent delays. Since
$|u(t)|\leq\delta$ for all $t\in[t_0,T]$ it follows from
Assumption~\ref{Assume:DDE}, items 2 and 4 that
$$|\dot{u}(t)|  \leq L_0|u(t)| +\sum_{j=1}^N L_j|u(t-\tau_j(t,u(t)))|
\leq \delta \sum_{j=0}^N L_j =L\delta.$$
Since $u(t)=\phi(t)$ for $t<0$, and by assumption $\phi(t)$ also has Lipschitz constant $L\delta$,
it follows for all $t_1\leq T$ and all $t_2\leq T$ that
\begin{equation} \label{eq:timebd}
|u(t_1)-u(t_2)|\leq L\delta|t_1-t_2|.
\end{equation}

Let $w(t)=\sup_{s\leq t}|u(s)-\tilde u(s)|$, then for $t\in[t_0,T]$,
similar to the proof of Lemma~\ref{Lem:FiniteBoundI}, we find
\begin{align*}
\dot w(t) & \leq |\dot u(t)-\dot{\tilde {u}}(t)| \\
&=\bigl|f\bigl(t,u(t),u(t-\tau_1(t,u(t))),\dotsc,u(t-\tau_N(t,u(t)))\bigr)
\\ &\qquad \qquad
-f\bigl(t,\tilde u(t),\tilde u(t-\tau_1(t,\tilde u(t))),\dotsc,\tilde u(t-\tau_N(t,\tilde u(t)))\bigr) \bigr| \\
& \leqslant L_0 |u(t)-\tilde u(t)|+
\sum_{j=1}^N L_j|u(t-\tau_j(t,u(t))) - \tilde u(t-\tau_j(t,\tilde u(t)))| \\
&\leqslant L_0 w(t)
+\sum_{j=1}^N L_j|u(t-\tau_j(t,\tilde u(t))) - \tilde u(t-\tau_j(t,\tilde u(t)))|\\
&\qquad \qquad +\sum_{j=1}^N L_j|u(t-\tau_j(t,u(t))) - u(t-\tau_j(t,\tilde u(t)))|.
\end{align*}

Since $t-\tau_j(t,\tilde u(t))\leqslant t$ it follows that
$|u(t-\tau_j(t,\tilde u(t)))-\tilde u(t-\tau_j(t,\tilde u(t)))|\leqslant w(t)$.
Using this and \eqref{eq:timebd} we see that
\begin{align*}
\dot w(t) & \leqslant \sum_{j=0}^N L_j w(t)
+L\delta\sum_{j=1}^N L_j|\tau_j(t,u(t))-\tau_j(t,\tilde u(t))|
 \leqslant L w(t)
+L\delta\sum_{j=1}^N L_jL_{\tau_j}|u(t)-\tilde u(t)|\\
& \leqslant\Bigl(1+\delta\sum_{j=1}^N L_jL_{\tau_j}\Bigr)L w(t)
= L_{\delta}w(t),
\end{align*}
from which
the result follows.
\end{proof}

In this section we will develop a constructive technique for showing Lyapunov stability and asymptotic stability based on Lyapunov-Razumikhin ideas. To establish Lyapunov stability in Theorem~\ref{Thm:Lyapunov_NA} we will show that solutions remain in a closed ball $B(0,\delta)$ of radius $\delta$ about the steady state. We are thus implicitly using the Lyapunov function $V(u)=|u|^2/2$, though it will not appear directly in our results. Asymptotic stability is established in Theorem~\ref{Thm:Technique}, by showing that all solutions that remain in $B(0,\delta)$ must converge to the steady state.
The application of these results to the model problem \eqref{Eq:1Delay} is demonstrated
in Sections~\ref{Sec:ModelEquation}--\ref{Sec:Stability:k=123}.

We will prove Lyapunov stability (and later asymptotic stability) by contradiction. Suppose the
DDE \eqref{Eq:GeneralMultipleDelay} has a solution which escapes the ball $B(0,\delta)$ at time $t^*$, by which we mean
\begin{equation} \label{eq:tstar}
t^*=\inf\bigl\{t: |u(t)|>\delta\bigr\}.
\end{equation}
For such a solution $u(t^*)=x$ with $|x|=\delta$. Now letting $v(\theta)=u(t^*+\theta)$ and
$$\eta_i(\theta)=u(t^*+\theta-\tau_i(t^*+\theta,u(t^*+\theta))), \qquad \theta\in[-r(\delta),0], \quad i=1,\ldots,N$$
for this solution $u(t)$,
we can rewrite the DDE \eqref{Eq:GeneralMultipleDelay} as a nonautonomous ODE
$$\dot{v}(\theta) = f(t^*+\theta,v(\theta), \eta_1(\theta),\dotsc, \eta_N(\theta))$$
where
$$v(-\tau_i(t^*,x))= u(t^*-\tau_i(t^*,x))=\eta_i(0), \quad i=1,\ldots,N.$$
Hence for each $i$ such that $\tau_i(t^*,x)>0$ the escaping solution of the DDE \eqref{Eq:GeneralMultipleDelay}
corresponds to a solution of an ODE boundary value problem (BVP) with $v(-\tau_i(t^*,x))=\eta_i(0)$ and $v(0)=x$.
To establish stability it is sufficient to show that the ODE BVP does not have any such solutions.
We will use Lemma~\ref{Lem:FiniteBoundI} to ensure that $|u(t)|\leq\delta$ for $t\leq t_0+kr(\delta)$
so that the forcing functions $\eta_i(\theta)$ acquire the regularity we require. Hence we define the set of
forcing functions $\eta_i$ which could correspond to an escaping trajectory as follows.

\begin{defn}\label{Defn:E_NA}
Suppose that Assumption~\ref{Assume:DDE} is satisfied for \eqref{Eq:GeneralMultipleDelay} and $k\geqslant 1$. Let $\delta\in(0,\delta_0]$, $|x|=\delta$ and $t^*\geqslant t_0+kr(\delta)$.
Define the set,
\begin{equation} \label{eq:Ekdxt}
E_{(k)}(\delta,x,t^*)=
\left\{
\begin{array}{l}
  (\eta_1,\dotsc,\eta_N):  \eta_i \in C^{k-1}\big([-r(\delta),0], B(0,\delta) \big), \\
\mbox{}\quad \text{and conditions 1 and 2 are satisfied.}\\
\end{array}\right\}.
\end{equation}
\begin{enumerate}
\item $x \cdot f(t^*,x,\eta_1(0),\dotsc,\eta_N(0)) \geqslant 0$,
\item For some initial function $\varphi\in\cC$, equation \eqref{Eq:GeneralMultipleDelay} has solution $u(t)$ such that
$\eta_i(\theta) = u(t^*+\theta-\tau_i(t^*,u(t^*+\theta))$
for $\theta\in[-r(\delta),0]$ for each $i\in\{1,\dotsc,N\}$.
\end{enumerate}
\end{defn}

Condition (1) in the definition is equivalent to $\tfrac{d}{dt}|u(t)|_{t=t^*}\geq0$, a necessary condition for the solution
to escape the ball $B(0,\delta)$ at time $t^*$.
In the following theorem we prove the first of our main results,
that if certain conditions hold for all functions in the sets $E_{(k)}(\delta,x,t^*)$ then the steady state of
\eqref{Eq:ScalarOneDelay} is Lyapunov stable. The condition \eqref{odecond_NA} implies that the ODE BVP
discussed above has no solutions, while \eqref{odecond2_NA} allows solutions with
$\tfrac{d}{d\theta}|v(\theta)|_{\theta=0}\leq0$.

The sets $E_{(k)}(\delta,x,t^*)$ however cannot be determined without solving \eqref{Eq:GeneralMultipleDelay},
so it is not practical to actually solve for them.
Instead, the conditions of the theorems can be shown to hold for larger sets that contain $E_{(k)}(\delta,x,t^*)$.
We prove the theorems first and then consider such larger sets.

\begin{thm}
\label{Thm:Lyapunov_NA}
Suppose that Assumption~\ref{Assume:DDE} is satisfied for \eqref{Eq:GeneralMultipleDelay}.
For $\delta\in(0,\delta_0]$,
$x\in\mathbb{R}^d$, $|x|=\delta$, define $E_{(k)}(\delta,x,t^*)$ as in Definition~\ref{Defn:E_NA}.
Consider the family of auxiliary ODE problems,
\begin{equation}\left\{
\begin{array}{ll}
\dot{v}(\theta) = f(t^*+\theta,v(\theta), \eta_1(\theta),\dotsc, \eta_N(\theta)), & \theta \in [-\tau_i(t^*,x),0],\\
v(-\tau_i(t^*,x))= \eta_i(0)\in\R^d, &
\end{array}\right.
\label{Eq:AuxiliaryODE_NA}
\end{equation}
for $i=1,\dotsc,N$ and $t^*\geqslant t_0+kr(\delta)$.
We denote the solution of \eqref{Eq:AuxiliaryODE_NA} by $v(x,\eta_1,\dotsc,\eta_N)(\theta)$
if we want to emphasize the dependence on $x$ and $\eta_i$, or just $v(\theta)$ otherwise.
Suppose there exists $\delta_1\in(0,\delta_0]$ such that for all $\delta\in(0,\delta_1)$,
and for every $x$ such that $|x|=\delta$ and all $t^*\geqslant t_0+kr(\delta)$, for all $(\eta_1,\dotsc,\eta_N)\in E_{(k)}(\delta,x,t^*)$ the solution
of \eqref{Eq:AuxiliaryODE_NA} for some $I\in\{1,\dotsc,N\}$ satisfies $\tau_I(t^*,x)>0$ and either
\begin{equation} \label{odecond_NA}
\tfrac{1}{\delta} v(x,\eta_1,\dotsc,\eta_N)(0)\cdot x< \delta,
\end{equation}
or
\begin{equation} \label{odecond2_NA}
\tfrac{1}{\delta} v(x,\eta_1,\dotsc,\eta_N)(0)\cdot x= \delta, \quad\text{and}\quad \dot{v}(x,\eta_1,\dotsc,\eta_N)(0)\cdot x \leq 0,
\end{equation}
then the zero solution to \eqref{Eq:GeneralMultipleDelay} is Lyapunov stable.
Moreover if $\delta\in(0,\delta_1)$ and
$|\varphi(s)|<\delta e^{-Lkr(\delta)}$ for $s\in[t_0-r(\delta),t_0]$ then
the solution of \eqref{Eq:GeneralMultipleDelay} satisfies $|u(t)|\leqslant \delta$ for all $t\geqslant t_0$.
\end{thm}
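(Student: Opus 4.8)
The plan is to establish the displayed bound $|u(t)|\leq\delta$ first and then read Lyapunov stability off from it: given $\epsilon>0$ I would choose $\delta\in(0,\delta_1)$ with $\delta<\epsilon$ and declare an initial function admissible when $|\varphi(s)|<\delta e^{-Lkr(\delta)}$ on $[t_0-r(\delta),t_0]$; the bound then forces $|u(t)|\leq\delta<\epsilon$ for all $t\geq t_0$, which is exactly Lyapunov stability. So everything reduces to the \emph{moreover} statement, which I would prove by contradiction, assuming a solution with such small initial data nevertheless satisfies $|u(t_1)|>\delta$ for some $t_1>t_0$.

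First I would invoke part~II of Lemma~\ref{Lem:FiniteBound} with $T=t_0+kr(\delta)$: since $|\varphi(s)|\leq\delta e^{-Lkr(\delta)}=\delta e^{L(t_0-T)}$, it gives $|u(t)|\leq\delta e^{L(t-T)}\leq\delta$ on $[t_0,t_0+kr(\delta)]$, so any first crossing of the sphere $|u|=\delta$ occurs strictly after $t_0+kr(\delta)$. I would then bootstrap regularity by the method of steps, using Assumption~\ref{Assume:DDE}(6): a solution is automatically $C^1$, and because on each successive interval of length $r(\delta)$ the delayed arguments reach back only into a region on which one more derivative has already been established, the forcing functions obtained from $u$ acquire $k-1$ continuous derivatives once the current time exceeds $t_0+(k-2)r(\delta)$. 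The delayed arguments feeding $\eta_i$ at a time $\geq t_0+kr(\delta)$ lie in $[\,t_0+(k-2)r(\delta),\,\cdot\,]$, which is precisely what membership $\eta_i\in C^{k-1}([-r(\delta),0],B(0,\delta))$ requires; the factor $k$ in the threshold $t\geq t_0+kr(\delta)$ is chosen so that two delay lengths of margin remain.

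At a first crossing time $\bar t$ I would set $x=u(\bar t)$, so $|x|=\delta$ and $|u(t)|\leq\delta$ on $[t_0,\bar t]$, while the left difference quotients of $|u|^2$ force $x\cdot\dot u(\bar t)\geq0$; this is Definition~\ref{Defn:E_NA}(1), and together with $\eta_i(\theta)=u(\bar t+\theta-\tau_i(\bar t+\theta,u(\bar t+\theta)))$ (whose values lie in $B(0,\delta)$, being samples of $u$ at times $\leq\bar t$ or of the small initial data) it shows $(\eta_1,\dots,\eta_N)\in E_{(k)}(\delta,x,\bar t)$. The structural heart of the argument is that $w(\theta):=u(\bar t+\theta)$ solves the auxiliary ODE \eqref{Eq:AuxiliaryODE_NA} with $w(-\tau_I(\bar t,x))=u(\bar t-\tau_I(\bar t,x))=\eta_I(0)$, so by uniqueness (the right-hand side is Lipschitz in $v$ by Assumption~\ref{Assume:DDE}(4)) the ODE solution is $v=u(\bar t+\cdot)$; hence $v(0)=x$ and $\dot v(0)=\dot u(\bar t)$. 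Then $\tfrac1\delta v(0)\cdot x=\delta$, so \eqref{odecond_NA} cannot hold and the hypothesis forces \eqref{odecond2_NA}, giving $\dot u(\bar t)\cdot x\leq0$.

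The hard part is that these two inequalities only yield $x\cdot\dot u(\bar t)=0$, not a strict sign, so a bare first-exit argument does not close: a trajectory could in principle graze the sphere $|u|=\delta$ and cross it with vanishing radial speed. I would resolve this by running the same argument simultaneously for a continuum of radii. For each level $c$ in a small interval $(\delta^2,\delta^2+\epsilon)$ with $\sqrt c<\delta_1$, the first time $\bar t_c$ at which $g:=|u|^2$ exceeds $c$ satisfies $g(\bar t_c)=c$ and $g'(\bar t_c)\geq0$; and since $|u|\leq\sqrt c$ on $[t_0,\bar t_c]$, the Razumikhin condition and all membership requirements for $E_{(k)}(\sqrt c,u(\bar t_c),\bar t_c)$ still hold (with $\bar t_c>t_0+kr(\sqrt c)$ again from part~II of Lemma~\ref{Lem:FiniteBound} for $c$ close to $\delta^2$), so the argument above gives $g'(\bar t_c)=0$. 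Thus every such $c$ is a critical value of the $C^1$ function $g$, which contradicts Sard's theorem, since the critical values of a $C^1$ map must have measure zero. This contradiction rules out any crossing, so $|u(t)|\leq\delta$ for all $t\geq t_0$ and the theorem follows.
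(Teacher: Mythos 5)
Your proposal is correct, and up to the crucial step it follows the same architecture as the paper's proof: argue by contradiction, use part~II of Lemma~\ref{Lem:FiniteBound} both to keep $|u(t)|\leqslant\delta$ on $[t_0,t_0+kr(\delta)]$ and to place any crossing time beyond the threshold $t_0+kr(\delta)$, bootstrap the $C^{k-1}$ regularity of the forcing functions from Assumption~\ref{Assume:DDE}, item 6 (the paper does this implicitly via the estimate that the delayed arguments lie beyond $t_0+(k-2)r$), verify membership in $E_{(k)}$ of Definition~\ref{Defn:E_NA}, and identify the translated DDE solution with the solution of \eqref{Eq:AuxiliaryODE_NA}, so that \eqref{odecond_NA} fails and \eqref{odecond2_NA} combined with the first-exit inequality yields only $\tfrac{d}{dt}|u|^2=0$ at the crossing --- the grazing obstruction, which the paper itself highlights with the example $y(t)=\delta+t^3\sin^2(2\pi/t)$. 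Where you genuinely diverge is in how grazing is defeated. The paper escalates to slightly larger radii: it picks $\hat\delta\in(\delta,\bar\delta)$ with $\bar\delta$ small enough that the initial data still clear the threshold $\hat\delta e^{-Lkr(\hat\delta)}$, and then \emph{asserts} the existence of a time $t^{**}$ at which $|u(t^{**})|=\delta^{**}=\sup_{[t^*,t^{**}]}|u|\in(\delta,\hat\delta)$ with $\tfrac{d}{dt}|u(t^{**})|>0$ strictly, so that at radius $\delta^{**}$ both \eqref{odecond_NA} and \eqref{odecond2_NA} are violated at once. You instead run the Razumikhin argument at \emph{every} level $c\in(\delta^2,\delta^2+\epsilon)$, conclude that each such $c$ is a critical value of the $C^1$ function $|u|^2$, and contradict Sard's theorem. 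These are two faces of the same coin: the paper's unproved existence claim for $t^{**}$ is itself essentially a Sard-type (rising-sun) statement --- if every first-attainment point of every level in $(\delta,\hat\delta)$ had vanishing radial derivative, all those levels would be critical values --- so your route makes explicit and rigorous precisely the real-analysis step the paper leaves as an assertion, at the cost of invoking a named theorem where the paper keeps the appearance of an elementary argument. The uniformity you need (that Lemma~\ref{Lem:FiniteBound} part~II and membership in $E_{(k)}(\sqrt{c},u(\bar t_c),\bar t_c)$ hold for all $c$ near $\delta^2$) is supplied by continuity of $\delta\mapsto\delta e^{-Lkr(\delta)}$, exactly as the paper's choice of $\bar\delta$; and your use of the strict inequality on $\varphi$ plays the role of the paper's parameter $\lambda\in(0,1)$. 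One cosmetic caveat: what you call the ``method of steps'' for regularity is fine here (it only uses that delayed arguments at time $t$ lie in $[t-r(\delta),t]$, so it survives vanishing delays), but you should avoid that name, since the paper explicitly notes the method of steps is inapplicable to existence arguments in this setting.
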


\begin{proof}
Let the hypothesis of the theorem hold and let $\delta\in(0,\delta_1)$.
By Lemma~\ref{Lem:FiniteBoundI},
the solution of \eqref{Eq:GeneralMultipleDelay}
satisfies $u(t)\in B(0,\delta)$ for all $t\leq t_0+ kr(\delta)$. We will prove that $u(t)\in B(0,\delta)$ for all $t\geq t_0$ by contradiction.

Assume that a solution escapes the ball $B(0,\delta)$ for the first time at some time $t^*\geq t_0+kr(\delta)$, where $t^*$ satisfies \eqref{eq:tstar}. 
Let $w(t)=\sup_{s\leq t}|u(s)|$, then $w(t^*)=|u(t^*)|$ and the fundamental theorem of calculus can be used to show that there exists a monotonically decreasing sequence $t_j$ with $t_j>t^*$, $w(t_j)=|u(t_j)|>\delta$
and $\lim_{j\to\infty}t_j=t^*$, $\lim_{j\to\infty}w(t_j)=\delta$ and
$\frac{d}{dt}|u(t_j)|>0$ for each $j$.

Assuming
$|\varphi(s)|<\delta e^{-Lkr(\delta)}$ for $s\in[t_0-r(\delta),t_0]$, there exists
$\lambda\in(0,1)$ such that
$|\varphi(s)|\leqslant \lambda \delta e^{-L kr(\delta)}$ for $s\in [t_0-r(\delta),t_0]$.
It follows from Assumption~\ref{Assume:DDE} and equation~\eqref{eq:r} that $r(\delta)$ is a continuous function of $\delta$ and hence
 $\delta e^{-Lkr(\delta)}$ is a continuous function of $\delta$ as well.
Thus for all $j$ sufficiently large $\lambda \delta e^{-Lkr(\delta)}<w(t_j)e^{-Lkr(w(t_j))}$.
Let $t^{**}=t_j$ for such a $j$ then
the solution escapes the ball $B(0,\delta^{**})$ for the first time at $t^{**}>t^*\geq t_0+kr(\delta)$.
Let $x=u(t^{**})$ so $|x|=\delta^{**}$ and
\begin{align}
0 < \tfrac12\tfrac{d}{dt}|u(t^{**})|^2 &= u(t^{**}) \cdot \dot{u}(t^{**}) ,\notag \\
&= u(t^{**}) \cdot f\big(t^{**},u(t^{**}),u(t^{**}-\tau_1(t^{**},u(t^{**}))),\dotsc,u(t^{**}-\tau_N(t^{**},u(t^{**})))\big), \notag \\
&=x\cdot f\big(t^{**},x,u(t^{**}-\tau_1(t^{**},u(t^{**}))),\dotsc,u(t^{**}-\tau_N(t^{**},u(t^{**})))\big).
\label{Eq:Technique1_NA}
\end{align}

\begin{figure}[t]
\begin{center}
\includegraphics[scale=0.55]{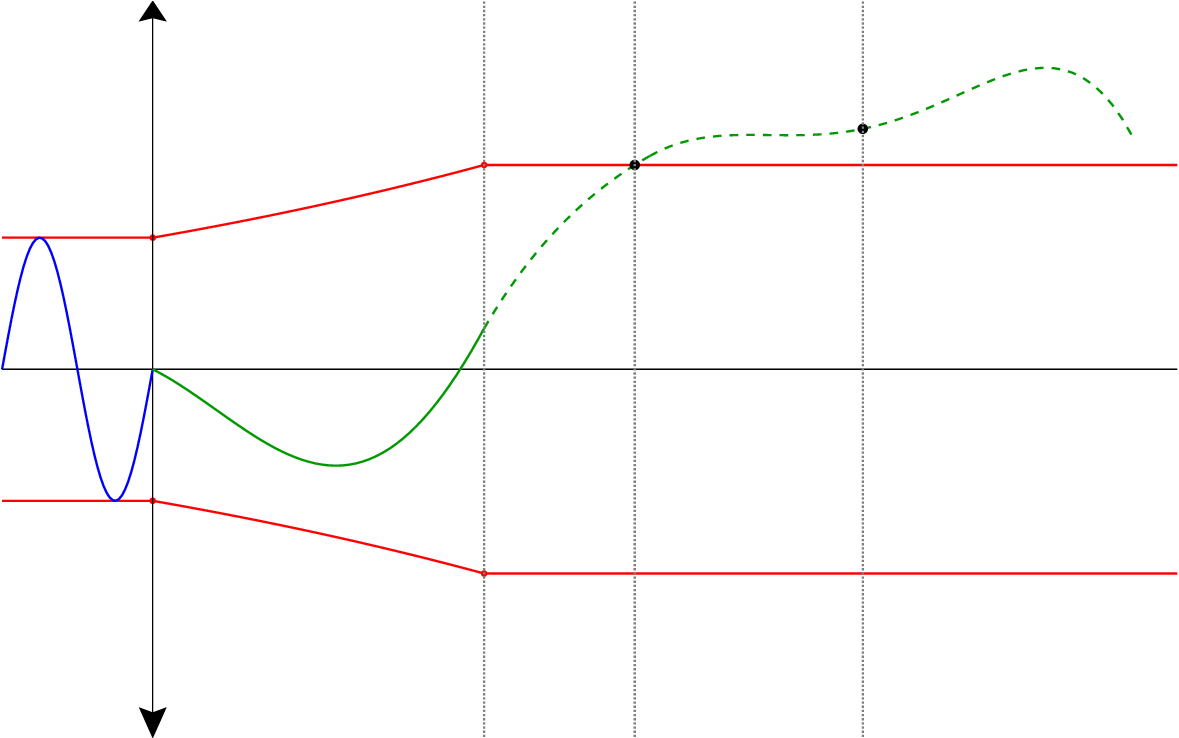}
\put(-270,100){{\footnotesize $t_0$}}
\put(-182,155){{\footnotesize\color{red} $\delta$}}
\put(-185,35){{\footnotesize\color{red} $-\delta$}}
\put(-295,140){{\footnotesize\color{red} $\delta e^{-Lkr}$}}
\put(-305,50){{\footnotesize\color{red} $-\delta e^{-Lkr}$}}
\put(-240,150){{\footnotesize\color{red} $\delta e^{L(t-t_0-kr)}$}}
\put(-245,40){{\footnotesize\color{red} $-\delta e^{L(t-t_0-kr)}$}}
\put(-295,120){{\footnotesize\color{blue} $\phi(t)$}}
\put(-182,100){{\footnotesize $t_0+kr$}}
\put(-142,100){{\footnotesize $t^*$}}
\put(-81,100){{\footnotesize $t^{**}$}}
\put(-143,143){{\footnotesize $|u(t^{*})|=\delta$}}
\put(-80,155){{\footnotesize $|u(t^{**})|=\delta^{**}$}}
\caption{\label{Fig:RazProof2_NA} Illustration of the proof of Theorem~\ref{Thm:Lyapunov_NA} in one dimension.}
\end{center}
\end{figure}

Since $|\varphi(s)|\leq \lambda \delta e^{-Lkr(\delta)}<\delta^{**} e^{-Lkr(\delta^{**})}$, it follows from Lemma~\ref{Lem:FiniteBoundI} that $t^{**} \geq t_0+kr(\delta^{**})$.
Now consider the auxiliary ODE problem \eqref{Eq:AuxiliaryODE_NA} for $I\in\{1,\ldots,N\}$ such that
either \eqref{odecond_NA} or \eqref{odecond2_NA} holds.
Let $v(\theta)=u(t^{**}+\theta)$, and noting that $[-\tau_I(t^{**},x ),0]\subseteq [-r(\delta^{**}),0]$,  let
$\eta_i(\theta) =u\bigl(t^{**}+\theta-\tau_i(t^{**}+\theta,u(t^{**}+\theta))\bigr)$
for $\theta\in[-r(\delta^{**}),0]$.
Then
$$t^{**}+\theta-\tau_i(t^{**}+\theta,u(t^{**}+\theta)) \geq t_0 +kr(\delta^{**})-2r(\delta^{**}) = t_0+(k-2)r(\delta^{**}),\quad \text{for }\theta\in[-r(\delta^{**}),0].$$
It follows that $\eta_i \in \cC^{k-1}([-r(\delta^{**}),0],B(0,\delta^{**}))$ for $i=1,\dotsc,N$.
From \eqref{Eq:Technique1_NA} we deduce that
\begin{equation}\label{lyapip_NA}
x\cdot f(t^{**},x, \eta_1(0),\dotsc,\eta_N(0)) > 0.
\end{equation}
Furthermore, we can consider
$$\eta_i(\theta) = U(t_0+kr(\delta^{**})+\theta - \tau_i(t_0+kr(\delta^{**})+\theta, U(t_0+kr(\delta^{**})+\theta)  ),
\quad \text{for $\theta\in[-r(\delta^{**}),0]$},
$$
where $U$ is the solution to \eqref{Eq:GeneralMultipleDelay} with initial function $\Phi$ defined as
\[
\Phi(s) = \left\{
\begin{array}{ll}
\varphi(s-kr(\delta^{**})+t^{**}), & s\leqslant t_0+ kr(\delta^{**})-t^{**},\\
u(s-kr(\delta^{**})+t^{**}), & s\in[ t_0 + kr(\delta^{**})-t^{**},t_0].
\end{array}\right.
\]
Thus, $(\eta_1,\dotsc,\eta_N)\in E_{(k)}(\delta^{**},x,t^{**})$.
Moreover,
\begin{align*}
&\dot v(\theta)=\dot u(t^{**}+\theta),\\
&= f\bigl(t^{**} +\theta,u(t^{**}+\theta),u(t^{**}+\theta-\tau_1(t^{**}+\theta,u(t^{**}+\theta)) ),
\dotsc, u(t^{**}+\theta-\tau_N(t^{**}+\theta,u(t^{**}+\theta)))\bigr),\\
&= f\bigl(t^{**} +\theta,v(\theta),\eta_1(\theta),\dotsc,\eta_N(\theta)\bigr).
\end{align*}
Also,
\[v(-\tau_I(t^{**},x))=u\bigl(t^{**} -\tau_I(t^{**},x)\bigr)=u\bigl(t^{**}-\tau_I(t^{**},u(t^{**}))\bigr)=\eta_I(0).\]
Thus $v$ is a solution to the ODE system \eqref{Eq:AuxiliaryODE_NA} with $i=I$ and $(\eta_1,\dotsc,\eta_N)\in E_{(k)}(\delta^{**},x,t^{**})$. But $v(0)=u(t^{**})=x$ with $|x|=\delta^{**}$
so $\frac{1}{\delta^{**}}v(x,\eta_1,\dotsc,\eta_N)(0)\cdot x=\delta^{**}$
which contradicts \eqref{odecond_NA}. Meanwhile, equation \eqref{lyapip_NA} implies that
$ \dot{v}(x,\eta_1,\dotsc,\eta_N)(0) \cdot x>0$ which contradicts \eqref{odecond2_NA}. 

Thus any solution with $|\varphi(s)|<\delta e^{-Lkr(\delta)}$ which escapes the ball
$B(0,\delta^{**})$ violates the conditions of the theorem, so $u(t)\in B(0,\delta^{**})$
for all $t\geq t_0$, and the zero solution to \eqref{Eq:GeneralMultipleDelay} is Lyapunov stable.
Moreover, since this is true with $t^{**}=t_j$ for all $j$ sufficiently large it follows that
$u(t)\in B(0,\delta)$ for all $t\geq t_0$.
\end{proof}


The proof of Theorem~\ref{Thm:Lyapunov_NA} is complicated by the auxiliary ODE \eqref{Eq:AuxiliaryODE_NA}
being nonautonomous. The solution of a nonautonomous ODE escaping the ball $B(0,\delta)$ for the first time at
$t^*$ neither implies that $\tfrac{d}{dt}|u(t^*)|>0$ nor that there exists $t^{**}>t^*$ such that
$|u(t)|>\delta$ for all $t\in(t^*,t^{**})$. As an illustration of this, consider the function $y(t)=\delta+t^3\sin^2(2\pi/t)$
which is easily seen to be continuously differentiable and crosses $\delta$ at $t=0$ with $y'(0)=0$, and for which
there does not exist any $\varepsilon>0$ such that $y(t)>\delta$ for all $t\in(0,\varepsilon)$.

Our second main result is to show asymptotic stability of the steady state $u=0$ if the auxiliary ODE
\eqref{Eq:AuxiliaryODE_NA} satisfies the strict inequality \eqref{odecond2_NA}. We will do this for autonomous
DDEs, and for simplicity of notation we only present the derivation for problems with one delay term ($N=1$).
The extension to multiple delays is straightforward, and we discuss the extension to periodically forced
nonautonomous DDEs after Theorem~\ref{Thm:Technique}. Hence we consider
autonomous DDEs of the form \eqref{Eq:GeneralMultipleDelay} with $N=1$ for which $f(t,u,v)=f(0,u,v)$ and
$\tau_1(t,u)=\tau(0,u)$ for all $t$. In this case we may set $t_0=0$, so
$\cC=\cC([-r(\delta),0],\R^d)$,
and rewrite \eqref{Eq:GeneralMultipleDelay} as
\begin{equation}
\left\{ {\begin{array}{ll}
   {\dot u(t) 
   =f\big(0,u(t),u(t-\tau(0,u(t)))\big),} & {t\geqslant 0,}  \\
   {u(t) = \varphi(t),} & t\leq0.
 \end{array} } \right. \label{Eq:ScalarOneDelay}
\end{equation}
By Assumption~\ref{Assume:DDE} item 2 the DDE \eqref{Eq:ScalarOneDelay} has the trivial steady state solution $u=0$.
We write the solution
to \eqref{Eq:ScalarOneDelay} as $u(\varphi)(t)$ when we want to emphasize initial conditions, or just $u(t)$ otherwise.
For equation \eqref{Eq:ScalarOneDelay} the auxiliary ODE introduced in \eqref{Eq:AuxiliaryODE_NA} becomes
\begin{equation}\left\{
\begin{array}{ll}
\dot v(\theta) = f\bigl( 0,v(\theta),\eta(\theta)\bigr), & \theta \in[-\tau(0,x),0],\\
v(-\tau(0,x))= \eta(0), &
\end{array}\right.
\label{Eq:AuxiliaryODE}
\end{equation}
and since we consider a single delay, there is one such auxiliary ODE associated with \eqref{Eq:ScalarOneDelay}.
We write the solution of \eqref{Eq:AuxiliaryODE}
as $v(x,\eta)(\theta)$ if we want to emphasize the dependence on $x$ and $\eta$, or just $v(\theta)$
otherwise. The sets $E_{(k)}(\delta,x,t)$ defined in \eqref{eq:Ekdxt} are no longer dependent on $t$ for
an autonomous DDE, and so we denote them by $E_{(k)}(\delta,x)$ which for \eqref{Eq:ScalarOneDelay} is defined as follows.

\begin{defn}\label{Defn:E}
Suppose that Assumption~\ref{Assume:DDE} is satisfied for \eqref{Eq:ScalarOneDelay} and $k\geqslant 1$. Let $\delta\in(0,\delta_0]$ and $|x|=\delta$.
Define the set
\begin{equation} \label{eq:Ekdx}
E_{(k)}(\delta,x)=
\left\{
\begin{array}{c}
  \eta:  \eta \in C^{k-1}\big([-\tau(0,x),0], B(0,\delta) \big), \text{ such that }
x \cdot f(0,x,\eta(0)) \geqslant 0,\\
\text{and for some initial function $\varphi\in\cC$ the solution $u(t)$ of \eqref{Eq:ScalarOneDelay} satisfies}\\
\eta(\theta) = u(kr(\delta)+\theta-\tau(0,u(kr(\delta)+\theta)) \text{ for } \theta\in[-\tau(0,x),0]
\end{array}\right\}.
\end{equation}
\end{defn}

To show asymptotic stability of the zero solution to \eqref{Eq:ScalarOneDelay} it is sufficient to strengthen
the conditions of Theorem~\ref{Thm:Lyapunov_NA} by requiring that solutions
of the auxiliary ODE problem 
satisfy the strict inequality \eqref{odecond_NA} and not the weaker condition \eqref{odecond2_NA}.
For the DDE \eqref{Eq:ScalarOneDelay} the condition \eqref{odecond_NA} becomes
\begin{equation} \label{odecond}
\frac{1}{\delta} v(x,\eta)(0)\cdot x< \delta,
\end{equation}
and we now show asymptotic stability when all solutions of the auxiliary ODE satisfy \eqref{odecond}. 

\begin{thm}[Asymptotic stability] \label{Thm:Technique}
Suppose that Assumption~\ref{Assume:DDE} is satisfied for \eqref{Eq:ScalarOneDelay}.
%
%
For $\delta\in(0,\delta_0]$,
$x\in\mathbb{R}^d$, $|x|=\delta$, define $E_{(k)}(\delta,x)$ as in Definition~\ref{Defn:E}.
If there exists $\delta_1\in(0,\delta_0]$ such that for all $\delta\in(0,\delta_1)$,
and for every $x$ such that $|x|=\delta$, for all $\eta\in E_{(k)}(\delta,x)$ the solution $v(x,\eta)(\theta)$
of the auxiliary ODE problem \eqref{Eq:AuxiliaryODE} satisfies \eqref{odecond}
then the results of Theorem~\ref{Thm:Lyapunov_NA} hold
and moreover, the zero solution of \eqref{Eq:ScalarOneDelay} is asymptotically stable.
Furthermore, if $|\varphi(s)|<\delta_1 e^{-Lkr(\delta_1)}$ for $s\in[-r(\delta_1),0]$ then $u(t)\to 0$ as $t\to\infty$.
\end{thm}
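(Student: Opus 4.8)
The plan is to reduce the theorem to a single new assertion, namely $u(t)\to0$, and to prove that by contradiction through a limiting-trajectory argument. The hypothesis imposed here — that every $\eta\in E_{(k)}(\delta,x)$ forces the strict inequality \eqref{odecond} — is a strengthening of the disjunctive hypothesis of Theorem~\ref{Thm:Lyapunov_NA} (it is exactly its strict alternative, uniformly in $\eta$), so the Lyapunov stability conclusion and the containment $|u(t)|\leq\delta$ for $|\varphi|<\delta e^{-Lkr(\delta)}$ carry over verbatim. It therefore remains only to show that a solution launched from this basin tends to the steady state. So I would suppose $|\varphi(s)|<\delta_1 e^{-Lkr(\delta_1)}$ yet $u(t)\not\to0$. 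Using continuity of $\delta\mapsto\delta e^{-Lkr(\delta)}$ (with $r$ continuous, as in Definition~\ref{Defn:r}) I can pick $\delta\in(0,\delta_1)$ with $|\varphi(s)|<\delta e^{-Lkr(\delta)}$, so Theorem~\ref{Thm:Lyapunov_NA} gives $|u(t)|\leq\delta<\delta_1$ for all $t\geq0$. Then $\rho:=\limsup_{t\to\infty}|u(t)|\in(0,\delta_1)$, and I choose $s_n\to\infty$ with $|u(s_n)|\to\rho$.

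Next I would apply the Arzel\`a--Ascoli theorem to the shifted solutions $u_n(\theta):=u(s_n+\theta)$. Since $u$ stays in $B(0,\delta)$ and, for $t$ large, the solution has $k$ bounded derivatives (bounds coming from Assumption~\ref{Assume:DDE} items 4 and 6 on the compact ball $B(0,\delta)$; cf.\ the role of $kr(\delta)$ in Definition~\ref{Defn:E}), the $u_n$ together with their first $k$ derivatives are uniformly bounded on every compact interval. A diagonal extraction then yields a subsequence converging in $C^{k-1}_{\mathrm{loc}}(\R)$ to an entire function $u_\infty$. Passing to the limit in \eqref{Eq:ScalarOneDelay} — here continuity of $\tau$ together with uniform convergence lets me send $u_n(\theta-\tau(0,u_n(\theta)))\to u_\infty(\theta-\tau(0,u_\infty(\theta)))$ — shows that $u_\infty$ is an entire solution. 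Because $s_n+\theta\to\infty$ for each fixed $\theta$, we get $|u_\infty(\theta)|\leq\rho$ for all $\theta$, while $|u_\infty(0)|=\rho$, so $\theta=0$ is a global maximum of $|u_\infty|$.

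Writing $x_\infty:=u_\infty(0)$ with $|x_\infty|=\rho$, I would set $\eta(\theta)=u_\infty(\theta-\tau(0,u_\infty(\theta)))$ on $[-\tau(0,x_\infty),0]\subseteq[-r(\rho),0]$ and verify $\eta\in E_{(k)}(\rho,x_\infty)$: it is $C^{k-1}$ and $B(0,\rho)$-valued by the convergence and $|u_\infty|\leq\rho$; differentiating $\tfrac12|u_\infty|^2$ at its interior maximum $\theta=0$ gives $x_\infty\cdot\dot u_\infty(0)=x_\infty\cdot f(0,x_\infty,\eta(0))=0\geq0$; and translating $u_\infty$ so that $x_\infty$ is attained at time $kr(\rho)$ realises $\eta$ from the admissible initial function $\varphi(s)=u_\infty(s-kr(\rho))$, $s\in[-r(\rho),0]$, in $\cC$. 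Since $u_\infty$ satisfies the same ODE as \eqref{Eq:AuxiliaryODE} with this $\eta$ and matches the endpoint value $v(-\tau(0,x_\infty))=\eta(0)=u_\infty(-\tau(0,x_\infty))$, uniqueness for the ODE initial value problem forces $v(x_\infty,\eta)\equiv u_\infty$ on $[-\tau(0,x_\infty),0]$, whence $v(x_\infty,\eta)(0)=x_\infty$ and $\tfrac1\rho v(x_\infty,\eta)(0)\cdot x_\infty=\rho$. This contradicts \eqref{odecond} applied with $\delta=\rho\in(0,\delta_1)$, forcing $\rho=0$, i.e.\ $u(t)\to0$.

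The main obstacle I anticipate is the compactness/regularity step: I must secure uniform bounds on the derivatives of $u$ up to order $k$ along the tail so that Arzel\`a--Ascoli delivers $C^{k-1}$ — not merely $C^0$ — convergence, since an admissible forcing function must lie in $C^{k-1}([-\tau(0,x),0],B(0,\rho))$, and I must justify taking the limit through the state-dependent delayed argument. Establishing that the limit $u_\infty$ is a genuine solution arising from an initial function in $\cC$ (so that $\eta\in E_{(k)}(\rho,x_\infty)$ is legitimate) is the other delicate point. By contrast, once the limiting trajectory is in hand the contradiction is immediate, and this is exactly where the \emph{strict} inequality \eqref{odecond}, rather than the weaker \eqref{odecond2_NA}, is essential: the maximum at $\theta=0$ produces $v(x_\infty,\eta)(0)\cdot x_\infty=\rho^2$, which violates strictness but would be compatible with \eqref{odecond2_NA}.
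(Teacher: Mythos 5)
Your proposal is correct and follows essentially the same strategy as the paper's proof: a contradiction argument in which the Arzel\`a--Ascoli theorem is applied to time-shifts of the solution, the limiting function is shown to be a genuine solution of the DDE that realizes an admissible forcing function $\eta\in E_{(k)}(\delta_\infty,x_\infty)$, and the resulting identity $\tfrac{1}{\delta_\infty}v(x_\infty,\eta)(0)\cdot x_\infty=\delta_\infty$ contradicts the strict inequality \eqref{odecond} unless $\delta_\infty=0$. The only notable difference is technical: the paper needs just uniform ($C^0$) convergence on the fixed interval $[-(k+1)r,0]$ --- equicontinuity comes from the first-derivative bound alone, the limit is identified as a DDE solution via the continuous-dependence estimate of Lemma~\ref{Lem:FiniteBound} part I, and the $C^{k-1}$ regularity of $\eta$ is then inherited from the solution property --- whereas you extract $C^{k-1}_{\mathrm{loc}}$ convergence and pass to the limit in the equation directly, which is workable but forces you to establish the uniform higher-derivative bounds you yourself flag as the main obstacle.
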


\begin{proof}
The only differences between the conditions of Theorem~\ref{Thm:Lyapunov_NA} and
Theorem~\ref{Thm:Technique} is that Theorem~\ref{Thm:Lyapunov_NA} allows a finite number of delays
and nonautonomous $f$ and requires the solution of the
auxiliary ODE problem \eqref{Eq:AuxiliaryODE_NA} to satisfy \eqref{odecond_NA} or \eqref{odecond2_NA},
while Theorem~\ref{Thm:Technique} assumes autonomous $f$, one delay, and requires that the strict inequality \eqref{odecond} hold. Thus it trivially follows that the requirements of Theorem~\ref{Thm:Lyapunov_NA} are satisfied,
and the results of Theorem~\ref{Thm:Lyapunov_NA} hold.

Let $\delta\in(0,\delta_1)$, $r=r(\delta)$ and $|\varphi(s)|< \delta e^{-Lkr}$ for $s\in[-r,0]$. Then
by Theorem~\ref{Thm:Lyapunov_NA} we have $|u(t)|\leqslant\delta$ for all $t\geqslant 0$. Consider such a solution.
Since $|u(t)|\leqslant\delta$ for all $t\geqslant 0$ we have
$\limsup _{t\to\infty}|u(t)| = \delta_\infty$ with $\delta_\infty\in[0,\delta]$,
and it remains only to show that $\delta_\infty=0$.

Since $\limsup _{t\to\infty}|u(t)|= \delta_\infty$ and $\{u:|u|=\delta_\infty\}$ is compact in $\R^d$
there exists $t_i$ such that $\lim_{i\to\infty}t_i=\infty$ and
$\lim_{i\to\infty}u(t_i)=x_\infty$
with $|x_\infty|=\delta_\infty$. Assume without loss of generality that $t_i\geq(k+1)r$ for all $i$.

Since $|u(t)|\leq\delta$ for all $t\geq 0$,
and $|\phi(t)|<\delta e^{-Lkr}\leq \delta$ for $t\leq0$
it follows from Assumption~\ref{Assume:DDE}
items 2 and 4 that $|\frac{d}{dt}u(t)|\leq (L_0+L_1)\delta = L\delta$ for all $t\geq0$.

Now, consider the sequence of functions $v_i(\theta)=u(t_i+\theta)$ for $\theta\in[-(k+1)r,0]$.
These functions and their derivatives are uniformly bounded with $\|v_i\|\leq\delta$ and
$\|\tfrac{d}{dt}v_i\|\leq L\delta$.
The set of all $C^1$ functions satisfying these bounds forms a
uniformly bounded
and equicontinuous closed family of functions defined on compact
set $[-(k+1)r,0]$. By 
the Arzel\`a-Ascoli theorem
the sequence of functions $v_i(\theta)$
has a uniformly convergent subsequence. Let
$\{v_i\}$ now denote this subsequence
and let $v(\theta)$
be the limiting function, which has Lipschitz constant $L\delta$, and satisfies $v(0)=x_\infty$.
Note that $|v(\theta)|\leq\delta_\infty$ for all $\theta\in[-(k+1)r,0]$, since the existence of a point
with $|v(\theta)|>\delta_\infty$ would contradict that
$\limsup _{t\to\infty}|u(t)|= \delta_\infty$.

Let $\phi^*(\theta)=v(-kr+\theta)$ for $\theta\in[-r,0]$ then we claim that the solution of
\eqref{Eq:ScalarOneDelay}
with initial function $\phi^*$
is $u^*(t)=v(t-kr)$ for $t\in[0,kr]$. To see that this is true,
let
$\sup_{t\in[0,kr]}|u^*(t)-v(t-kr)|=\epsilon\geq0$.
Now let $u_i(t)$ solve
\eqref{Eq:ScalarOneDelay} with
corresponding initial functions
$\phi_i(\theta)=v_i(-kr+\theta)$ for $\theta\in[-r,0]$,
so $u_i(t)=v_i(t-kr)$ for $t\in[0,kr]$.
For all
$i$ sufficiently large we have $\sup_{t\in[0,kr]}|u_i(t)-v(t-kr)|=\sup_{t\in[0,kr]}|v_i(t-kr)-v(t-kr)|\leq\tfrac13\epsilon$
by the uniform convergence of the $v_i$ to $v$.  But also by the uniform convergence for all $i$ sufficiently large
we have
$|\phi_i(\theta)-\phi^*(\theta)|=|v_i(-kr+\theta)-v(-kr+\theta)|\leq\tfrac13\varepsilon
e^{-L_{\delta}kr}$ for all $\theta\in[-r,0]$,
and hence by Lemma~\ref{Lem:FiniteBoundII}
we have $\sup_{t\in[0,kr]}|u_i(t)-u^*(t)|=\sup_{t\in[0,kr]}|v_i(t-kr)-u^*(t)|\leq\tfrac13\epsilon$.
But now
$$\epsilon=\sup_{t\in[0,kr]}|u^*(t)-v(t-kr)|\leq
\sup_{t\in[0,kr]}|v_i(t-kr)-u^*(t)|+\sup_{t\in[0,kr]}|v_i(t-kr)-v(t-kr)|=\tfrac23\epsilon,$$
which can only be true if $\epsilon=0$ so the solution of
\eqref{Eq:ScalarOneDelay} with
$\phi^*(\theta)=v(-kr+\theta)$ for $\theta\in[-r,0]$
is indeed $u^*(t)=v(t-kr)$ for $t\in[0,kr]$.

Now let $\eta(\theta)=v(\theta-\tau(0,v(\theta)))$ for $\theta\in[-\tau(0,x_\infty),0]$
which implies that $\eta(\theta)=u^*(kr+\theta-\tau(0,u^*(kr+\theta)))$.
Moreover $|v(\theta)|\leq\delta_\infty$ for all $\theta\in[-(k+1)r,0]$ implies
that $|\eta(\theta)|\leq\delta_\infty$ for $\theta\in[-\tau(0,x_\infty),0]$ and hence
$\eta\in\cC^{k-1}\bigl([-\tau(0,x_\infty),0],B(0,\delta_\infty)\bigr)$. To show that
$\eta\in E_{(k)}(\delta_\infty,x_\infty)$ it remains only to show that
$x_\infty\cdot f(0,x_\infty,\eta(0))\geq0$. But if this is false then
\begin{align*}
0> x_\infty\cdot f(0,x_\infty,\eta(0))
& =v(0)\cdot f(0,v(0),\eta(0))= u^*(kr)\cdot f(0,u^*(kr),u^*(kr-\tau(0,u^*(kr))))\\
& =u^*(kr)\cdot \dot{u}^*(kr) = \tfrac12\tfrac{d}{dt}|u^*(kr)|.
\end{align*}
But, $|u^*(kr)|=\delta_\infty$ and $\frac{d}{dt}|u^*(kr)|<0$ implies that there exists $\epsilon>0$ such that
$|u^*(t)|>\delta_\infty$ for $t\in(kr-\epsilon,kr)$, or equivalently
$|v(t)|>\delta_\infty$ for $t\in(-\epsilon,0)$. But this contradicts
$|v(\theta)|\leq\delta_\infty$ for all $\theta\in[-(k+1)r,0]$, so we must have
$x_\infty\cdot f(0,x_\infty,\eta(0))\geq0$ and $\eta\in E_{(k)}(\delta_\infty,x_\infty)$.

Now $v(0)=x_\infty$ implies $v(0) \cdot x_\infty = \delta_\infty^2$. But unless $\delta_\infty=0$
this contradicts that \eqref{odecond} holds for all $\delta\in(0,\delta_1)$. The result follows.
\end{proof}

Notice that  Theorem~\ref{Thm:Technique} not only establishes asymptotic stability
of the steady state, but also shows that the basin of attraction of the steady state contains
the ball
\begin{equation} \label{Eq:basinbd}
\bigl\{\phi:\|\phi\|< \delta_1e^{-Lkr(\delta_1)}\bigr\}.
\end{equation}
We will consider the basin of attraction of the steady state of the model problem \eqref{Eq:1Delay} in Section~\ref{Sec:Basins}.

The proof of Theorem~\ref{Thm:Technique} given above would not be valid for nonautonomous DDEs, 
but would only fail in one crucial step;
for a general nonautonomous DDE \eqref{Eq:GeneralMultipleDelay},
the limiting function $v(t)$ would not in general define a solution of the DDE. The result is easily extended
to periodically nonautonomous DDEs by choosing the initial sequence $t_i$ to be
$t_i=(k+1)r+iT$ where $T$ is the period of 
$f$, and if necessary taking a subsequence so that $u(t_i)$ converges to $x_\infty$.

Our asymptotic stability result and its proof differs very significantly from other asymptotic stability results for RFDEs which are all similar to Theorem 4.2 of Hale and Verduyn Lunel \cite{HaleLunel:1}.
Beyond the technical differences in continuity assumptions, and whether delays are locally or globally bounded, there are two fundamental but related differences between our result and results such as those in \cite{HaleLunel:1}. Firstly, in Theorem~\ref{Thm:Technique} we establish \emph{asymptotic stability}, but in
Theorem 4.2 of \cite{HaleLunel:1} the stronger property
of \emph{uniform asymptotic stability} is obtained. But secondly,
auxiliary functions with specific properties are required (in Theorem 4.2 of \cite{HaleLunel:1} four auxiliary functions, $u$, $v$, $\omega$ and $p$ appear)
to obtain the contraction that leads to the uniform asymptotic stability.
Construction of such functions
is difficult even for constant delay DDEs, and a major obstacle to the application of these theorems. In contrast,
we use a proof by contradiction which shows that there does not exist a trajectory which is not asymptotic
to the steady state. The contradiction argument establishes asymptotic stability rather than
uniform asymptotic stability, but does not require any troublesome auxiliary functions, and
thus is much
easier to apply. In the following sections we will use Theorem~\ref{Thm:Technique} to study
the asymptotic stability of the steady state of the model state-dependent DDE \eqref{Eq:1Delay}.


We next define the larger sets containing $E_{(k)}(\delta,x)$ in which
we will later show that conditions of Theorem~\ref{Thm:Technique} hold
to establish asymptotic stability for the model problem \eqref{Eq:1Delay}.
By items 4--6 in Assumption~\ref{Assume:DDE}, if a bound on $u(t)$ is given for $t\in[-r(\delta),(k-1)r(\delta)]$ we can also find bounds on up to the $k-1$ order derivatives of $u\bigl(t-\tau(0,u(t)\bigr)$ for $t\in[(k-1)r(\delta),kr(\delta)]$. These bounds can be derived from the bounds on $f$, $\tau$ and their derivatives. Recalling the definition of $E_{(k)}(\delta,x)$
in Definition~\ref{Defn:E} this leads us to the following definition.

\begin{defn} \label{Defn:ED}
Suppose that Assumption~\ref{Assume:DDE} is satisfied for \eqref{Eq:ScalarOneDelay} and $k\geqslant 1$. Let $\delta\in(0,\delta_0]$ while $|x|=\delta$. Let the functions $\mathcal{D}_j(\delta)
$ be Lipschitz continuous in $\delta$ for
$j=0,\dotsc,k-1$ and satisfy
\begin{equation} \label{Eq:Dj}
\mathcal{D}_{j}(\delta)\geqslant
\sup_{t\in[(k-1)r(\delta),kr(\delta)]} \bigl|\tfrac{d^j}{dt^j}u\big(t-\tau(t,u(t))\big)\bigr|,
\end{equation}
given that $|u(t)| \leqslant \delta$ for all $t\in[-r(\delta),kr(\delta)]$, where $u(t)$ is a solution to \eqref{Eq:ScalarOneDelay}.
Define the set
\begin{equation} \label{Eq:calEkdx}
\mathcal{E}_{(k)}(\delta,x)=\left\{\begin{array}{l}
 \eta:  \eta \in\PC^{k-1}\big([-\tau(0,x),0], B(0,\delta)\big), \;
x \cdot f(0,x,\eta(0)) \geqslant 0,\\
\qquad \bigl|\tfrac{d^j}{d\theta^j}\eta(\theta)\bigr|\leqslant \mathcal{D}_{j}(\delta)
\text{ for } \theta\in[-\tau(0,x),0], j=0,\dotsc,k-1
\end{array}\right\}
\end{equation}
where $\PC^{k-1}\big([-\tau(0,x),0], B(0,\delta)\big)$ denotes the space of $C^{k-2}$ functions which
are piecewise $C^{k-1}$.
\end{defn}

Clearly, $E_{(k)}(\delta,x)\subseteq \mathcal{E}_{(k)}(\delta,x)$. It is convenient to consider piecewise $C^{k-1}$ functions
in Definition~\ref{Defn:ED} because we will later seek the supremum of an integral over the set $\mathcal{E}_{(k)}(\delta,x)$. Even if all the functions in $\mathcal{E}_{(k)}(\delta,x)$ were 
$C^{k-1}$, in general the maximiser could still be piecewise $C^{k-1}$.

In Section~\ref{Sec:Stability:k=123}
we derive bounds $\mathcal{D}_{j}(\delta)$ for the model problem \eqref{Eq:1Delay}, and use these to
identify parameter regions for which all $ \eta \in \mathcal{E}_{(k)}(\delta,x)$ satisfy
\eqref{odecond}, and hence the steady state of \eqref{Eq:1Delay}
is asymptotically stable by Theorem~\ref{Thm:Technique}.
For $\delta\in(0,\delta_0]$, $x\in\mathbb{R}^d$ and $|x|=\delta$,
it is
useful to define
\begin{equation} \label{eq:GF}
\mathcal{G}(\delta,x) = \sup_{\eta\in \mathcal{E}_{(k)}(\delta,x)} \frac{1}{\delta} v(x,\eta)(0)\cdot x, \qquad
\mathcal{F}(\delta) = \sup_{|x|=\delta} \, \mathcal{G}(\delta,x),
\end{equation}
where $v(x,\eta)$ is the solution to \eqref{Eq:AuxiliaryODE}.
Notice that for $\delta\in(0,\delta_0]$ and $|x|=\delta$ we have
\begin{equation} \label{stronger}
\sup_{\eta\in E_{(k)}(\delta,x)} \frac{1}{\delta}v(x,\eta)(0)\cdot x
\leqslant \sup_{\eta\in \mathcal{E}_{(k)}(\delta,x)} \frac{1}{\delta}v(x,\eta)(0)\cdot x = \mathcal{G}(\delta,x) \leqslant \sup_{|x|=\delta} \mathcal{G}(\delta,x)  = \mathcal{F}(\delta).
\end{equation}
Thus if $\mathcal{F}(\delta)<\delta$ for all $\delta\in(0,\delta_1)$ then
\eqref{odecond} holds for all $\delta\in(0,\delta_1)$
and Theorems~\ref{Thm:Lyapunov_NA} and~\ref{Thm:Technique} can be applied. Although $\mathcal{F}(\delta)<\delta$
is a somewhat stronger condition than \eqref{odecond} we will find it
convenient to work with when considering the model problem \eqref{Eq:1Delay}.

The set $E_{(k)}(\delta,x,t)$ given by \eqref{eq:Ekdxt} for the DDE
\eqref{Eq:GeneralMultipleDelay} can be easily generalised to a larger set $\mathcal{E}_{(k)}(\delta,x,t)$,
in a similar manner.
For $t\geq t_0+kr(\delta)$ we let
\begin{equation} \label{Eq:calEkdxt}
\mathcal{E}_{(k)}(\delta,x,t)=\left\{\begin{array}{l}
(\eta_1,\ldots,\eta_N):  \eta_i \in\PC^{k-1}\big([-r(\delta),0], B(0,\delta)\big), \\
x \cdot f(t,x,\eta_1(0),\ldots,\eta_N(0)) \geqslant 0,\\
\bigl|\tfrac{d^j}{d\theta^j}\eta_i(\theta)\bigr|\leqslant \mathcal{D}_{ij}(\delta,t)
\text{ for } \theta\in[-r(\delta),0], i=1,\dotsc,N, \; j=0,\dotsc,k-1
\end{array}\right\}
\end{equation}
where for all solutions $u$ to \eqref{Eq:GeneralMultipleDelay} which satisfy $|u(s)|\leq\delta$
for $s\in[t-(k+1)r(\delta),t]$,
\begin{equation} \label{Eq:Dij}
\mathcal{D}_{ij}(\delta,t)\geqslant
\sup_{s\in[t-r(\delta),t]} \Bigl|\tfrac{d^j}{dt^j}u\big(s-\tau_i(s,u(s))\big)\Bigr|.
\end{equation}
It follows that $E_{(k)}(\delta,x,t)\subseteq\mathcal{E}_{(k)}(\delta,x,t)$,
and hence establishing properties on the set $\mathcal{E}_{(k)}(\delta,x,t)$ is sufficient to apply
Theorem~\ref{Thm:Lyapunov_NA}. However, we will consider the 
autonomous model problem \eqref{Eq:1Delay}  in the following sections, and so will not need to consider
$E_{(k)}(\delta,x,t)$ or $\mathcal{E}_{(k)}(\delta,x,t)$ further.

\section{Model Equation Properties}
\label{Sec:ModelEquation}
In the following sections we will apply the Lyapunov-Razumikhin theory of Section~\ref{Sec:Extension}
to the model state-dependent DDE given in \eqref{Eq:1Delay}.
In this section we consider the properties of the DDE \eqref{Eq:1Delay} and its auxiliary ODE \eqref{Eq:AuxiliaryODE},
and will define the sets and functions that we will use to apply our
results to this model problem.
We begin by considering boundedness and, existence and uniqueness of solutions of the DDE \eqref{Eq:1Delay}
with $\mu+\sigma<0$,
which generalise the results of Mallet-Paret and Nussbaum in \cite{MalletParetNussbaum:4} for $\sigma<\mu<0$.

\begin{lem} \label{lem:delayed}
Let $c\ne0$ and $\mu+\sigma<0<a$. If $u\in C^1([0,\infty),\R)$ solves \eqref{Eq:1Delay} for $t\geq0$ with $c\phi(0)\geq-a$ then $t-a-cu(t)<t$ for all $t>0$.
\end{lem}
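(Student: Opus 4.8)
My plan is to recast the claim as a strict positivity statement for the instantaneous delay. Writing $g(t)=a+cu(t)$, the assertion $t-a-cu(t)<t$ for $t>0$ is precisely $g(t)>0$ for all $t>0$, and the hypothesis $c\varphi(0)\geq-a$ together with $u(0)=\varphi(0)$ gives $g(0)=a+cu(0)\geq0$. Since $u\in C^1$, the function $g$ is $C^1$ with $\dot g(t)=c\dot u(t)$, so the whole problem reduces to controlling the sign of $g$.

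The decisive step is a sign computation at any zero of $g$. If $g(t_0)=0$ then $cu(t_0)=-a$, and the delayed argument satisfies $t_0-a-cu(t_0)=t_0-g(t_0)=t_0$, so the delayed term collapses onto the current state, $u(t_0-a-cu(t_0))=u(t_0)$. Substituting into \eqref{Eq:1Delay} yields $\dot u(t_0)=(\mu+\sigma)u(t_0)=(\mu+\sigma)(-a/c)$, whence $\dot g(t_0)=c\dot u(t_0)=-(\mu+\sigma)a>0$, using $\mu+\sigma<0$ and $a>0$. Thus wherever $g$ vanishes it is strictly increasing; in particular $g$ can only cross zero upward. I note that this computation is independent of the sign of $c$ and of the particular value $u(t_0)$, which is why the single hypothesis $c\varphi(0)\geq-a$ suffices for either sign of the state-dependence coefficient.

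I then argue by contradiction. Suppose $S=\{t>0:g(t)\leq0\}$ is nonempty and set $t^*=\inf S\geq0$, so that $g(t)>0$ for all $t\in(0,t^*)$ by definition of the infimum. If $t^*>0$, continuity forces $g(t^*)=0$ (the left limit along $(0,t^*)$ is $\geq0$, while a sequence in $S$ accumulating at $t^*$ forces $\leq0$); but since $g>0=g(t^*)$ immediately to the left, the left difference quotients are $\leq0$, giving $\dot g(t^*)\leq0$ and contradicting $\dot g(t^*)=-(\mu+\sigma)a>0$. If instead $t^*=0$, then $g(0)=0$ (otherwise $g>0$ near $0$ by continuity, excluding the nearby points of $S$), and the same sign computation gives $\dot g(0)>0$, so $g>0$ on a right-neighbourhood of $0$, again contradicting that points of $S$ accumulate at $0$. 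Hence $S=\emptyset$ and $g(t)>0$ for all $t>0$, which is the claim.

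The only delicate part is the bookkeeping in this final step: verifying that $t^*$ is a genuine zero of $g$ rather than merely a point where $g\leq0$, and checking that the relevant times remain positive so that the sign identity applies at $t^*$. This is routine once the key identity $\dot g=-(\mu+\sigma)a>0$ at zeros of $g$ is established, and that identity — obtained from the vanishing of the delay exactly when $g=0$ — is the real content of the proof.
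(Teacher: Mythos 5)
Your proof is correct and is essentially the paper's argument: both hinge on the observation that at a first time where $a+cu(t)=0$ the delayed argument collapses to the current time, so $\dot u=(\mu+\sigma)u=-(a/c)(\mu+\sigma)$ has the sign that pushes the solution back, contradicting a first crossing (with the boundary case $g(0)=0$, i.e.\ $\phi(0)=-a/c$, treated separately in both proofs). The only cosmetic difference is that you work directly with $g(t)=a+cu(t)$ and note that the factors of $c$ cancel, whereas the paper first reduces to $c>0$ via the symmetry $(u,c)\mapsto(-u,-c)$ and then runs the same contradiction.
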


\begin{proof}
The model DDE \eqref{Eq:1Delay} is invariant under the transformation $u\mapsto -u$, $c\mapsto -c$, so we consider only the case $c>0$. 
Suppose first that $\phi(0)>-a/c$ and assume there exists $t^*>0$ for which $u(t^*)=-a/c$ for the first time. 
Since $u(t)>-a/c$ for $0\leq t<t^*$ implies $\dot{u}(t^*)\leq0$, but from \eqref{Eq:1Delay} with $u(t^*)=-a/c$ we have $\dot{u}(t^*)=(\mu+\sigma)u(t^*)=-\tfrac{a}{c}(\mu+\sigma)>0$, supplying a contradiction. Thus we must have $u(t)>-\frac{a}{c}$ for all $t\geq 0$ and the result follows.
If $\phi(0)=-a/c$ then $\dot{u}(0)>0$ and the result follows similarly.
\end{proof}

We will always consider the DDE \eqref{Eq:1Delay} with $a>0$ and $\mu+\sigma<0$, then
Lemma~\ref{lem:delayed} assures that the deviating argument is always a delay.
The lemma also gives the lower bound $u(t)>-a/c$ on solutions when $c>0$ (or an upper bound on solutions when $c<0$).
When $\mu<0$ we can bound solutions above and below. It is convenient to define
\begin{equation} \label{Eq:1Delay:L0M0tau0}
M_0=-\frac{a}{c}, \qquad N_0=\frac{a\sigma}{c\mu}, \qquad \tau=a+cN, \qquad \tau_0=a+cN_0.
\end{equation}
Here $N$ is used to denote a bound on solutions of the single delay DDE \eqref{Eq:1Delay} (an upper bound if $c>0$ and a lower bound if $c<0$), in contrast to the multiple delay DDE \eqref{Eq:GeneralMultipleDelay} for which we used $N$ to denote the number of delays.
We use $M$ to denote the other bound on the solution, and the notation $\text{sign}(c)[M,N]$ to denote $[M,N]$ if $c>0$ and $[-N,-M]$ if $c<0$ (and similar notation for open and half-open intervals).

\begin{lem} \label{lem:1delbound}
Let $c\ne 0$ and $\mu+\sigma<0<a$ and $\mu<0$. Suppose $u\in C^1([0,\infty),\R)$ solves \eqref{Eq:1Delay} for $t\geq0$.
If $\sigma>0$ let $\textit{sign}(c)M\in\textit{sign}(c)[M_0,0)$ and $\textit{sign}(c)N>0$,
and suppose that
$\textit{sign}(c)\phi(t)\in\textit{sign}(c)[M,N]$ for all $t\in[-\tau,0]$. 
If $\sigma\leq0$ let $M=M_0$ and $N=\max\{N_0,\phi(0)\}$ and suppose
$\textit{sign}(c)\phi(t)\geq\textit{sign}(c)M_0$ for all $t\in[-\tau,0]$.
Then
\begin{equation} \label{eq:ubd}
\textit{sign}(c)u(t)\in\textit{sign}(c)(M,N), \quad \forall t>0.
\end{equation}
\end{lem}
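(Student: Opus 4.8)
The plan is to establish the two-sided bound \eqref{eq:ubd} by a barrier/contradiction argument, treating $c>0$ and $c<0$ together via the sign conventions (indeed, by the symmetry $u\mapsto-u$, $c\mapsto-c$ noted in Lemma~\ref{lem:delayed}, I may assume $c>0$ throughout and simply verify that all the sign-adjusted hypotheses reduce to ordinary inequalities). With $c>0$ the claim is that $u(t)\in(M,N)$ for all $t>0$. The lower bound $u(t)>M_0=-a/c$ is already supplied by Lemma~\ref{lem:delayed}, so when $\sigma\le0$ (where $M=M_0$) the lower side is immediate; when $\sigma>0$ and $M\in[M_0,0)$ I must still rule out $u(t)=M$. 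The real work is therefore the upper bound $u(t)<N$ together with the sharpened lower bound when $\sigma>0$.

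First I would set up the first-escape-time argument: suppose the conclusion fails and let $t^*>0$ be the first time at which $u$ touches the boundary of the interval, i.e. either $u(t^*)=N$ with $u(t)<N$ for $t<t^*$, or $u(t^*)=M$ with $u(t)>M$ for $t<t^*$. At such a point one-sided derivative information forces $\dot u(t^*)\ge0$ in the upper case and $\dot u(t^*)\le0$ in the lower case. I then substitute into the DDE $\dot u(t^*)=\mu u(t^*)+\sigma u(t^*-a-cu(t^*))$ and seek a sign contradiction. The key quantitative input is controlling the delayed term $u(t^*-\tau(t^*))$: by Lemma~\ref{lem:delayed} the argument $t^*-a-cu(t^*)$ is genuinely less than $t^*$, so it lies in a region where $u$ is already confined to $[M,N]$ (either the initial data on $[-\tau,0]$ or earlier solution values). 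The constants $N_0=a\sigma/(c\mu)$ and $M_0=-a/c$ are engineered precisely so that the worst-case delayed value, when fed through $\mu u(t^*)+\sigma u(\text{delayed})$, produces a derivative of the wrong sign, giving the contradiction. For instance at $u(t^*)=N\ge N_0$ with $\sigma>0$, one estimates $\dot u(t^*)\le\mu N+\sigma N=(\mu+\sigma)N<0$ (using $\mu+\sigma<0$ and $N>0$), contradicting $\dot u(t^*)\ge0$; the cases split according to the sign of $\sigma$ because that determines which endpoint the delayed term is pushed toward.

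The step I expect to be the main obstacle is the boundary-derivative logic at a first-touching time, which is delicate for two reasons. The pathology flagged in the paper's discussion after Theorem~\ref{Thm:Lyapunov_NA} (a $C^1$ function can touch a level with zero derivative and immediately return, with no interval on which it exceeds the level) means I cannot naively assert that $\dot u(t^*)$ has a strict sign; I can only get the non-strict inequality $\dot u(t^*)\ge0$ (resp. $\le0$), and I must then check that the DDE forces a \emph{strict} inequality of the opposite sign, so that even the degenerate case is excluded. A clean way to handle this uniformly is to prove the open-interval inclusion directly: define $t^*=\inf\{t>0:u(t)\notin(M,N)\}$, show $t^*=\infty$ by deriving a strict contradiction at any finite candidate, and note that because the forcing $\dot u(t^*)$ comes out strictly negative (upper) or strictly positive (lower) from $\mu+\sigma<0$, the solution cannot even reach the boundary. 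The second subtlety is the $\sigma>0$ lower bound: here the delayed term can be as large as $N$, so I must verify $\mu M+\sigma N\ge\mu M_0+\sigma N$ stays positive using the definition of $N_0$ and the constraint $M\in[M_0,0)$; tracking these endpoint-optimisations carefully, case by case on $\mathrm{sign}(\sigma)$, is where the bookkeeping concentrates.
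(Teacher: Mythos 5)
Your proposal is correct and takes essentially the same route as the paper's own proof: reduce to $c>0$ by the symmetry $(u,c)\mapsto(-u,-c)$, then run a first-touching-time barrier argument in which the DDE, together with $\mu+\sigma<0$ and the confinement of the delayed value guaranteed by Lemma~\ref{lem:delayed}, forces $\dot u$ strictly inward at $u=M$ and $u=N$, with the $u=M_0$ case delegated to the proof of Lemma~\ref{lem:delayed}. One minor bookkeeping slip: for the $\sigma>0$ lower barrier the dangerous delayed value is $M$ (not $N$, since $\sigma>0$ makes a large delayed term push $\dot u$ upward), giving $\dot u(t^*)\geq(\mu+\sigma)M>0$; this does not affect the validity of your argument.
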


\begin{proof}
Again, we consider the $c>0$ case, then it is sufficient to show that $\dot{u}(t)>0$ if $u(t)=M$, and
$\dot{u}(t)<0$ if $u(t)=N$ given that $u(s)\in(M,N)$ for $s\in(0,t)$. 
The case where $u(t)=M_0$ 
is dealt with in the proof of Lemma~\ref{lem:delayed}, the other
cases are straightforward.
\end{proof}

\begin{thm} \label{Thm:1Delay_existence_uniqueness}
Let $\mu+\sigma<0<a$.
Let the initial history function $\phi$ be 
continuous and for $\mu<0$ satisfy
the bounds given
in Lemma~\ref{lem:1delbound}.
For $\mu\geq0$
let $\textit{sign}(c)\phi(t)\geq\textit{sign}(c)M_0$ for all $t\in(-\infty,0]$.
Then there exists at
least one solution $u\in C^1([0,\infty),\R)$ which solves \eqref{Eq:1Delay} for all $t\geq0$.
If $\mu<0$ any solution satisfies the bounds \eqref{eq:ubd}, while if $\mu\geq0$ any solution
satisfies $\textit{sign}(c)u(t)\geq\textit{sign}(c)M_0$ for all $t\geq0$.
If $\phi$ is locally Lipschitz the solution is unique.
\end{thm}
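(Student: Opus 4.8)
The goal is to establish existence (and conditional uniqueness) of solutions to the model DDE \eqref{Eq:1Delay} for continuous initial data satisfying the prescribed bounds. I would separate the three assertions: global existence of a $C^1$ solution, the \emph{a priori} bounds, and uniqueness under local Lipschitz data.

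My plan is to first use Lemma~\ref{lem:delayed} to guarantee that the deviating argument $t-a-cu(t)$ is genuinely a delay (less than $t$), so that \eqref{Eq:1Delay} can be solved by the method of steps: on each interval the right-hand side depends only on already-known values of $u$. Local existence of a $C^1$ solution then follows from a standard Peano-type argument applied to the ODE obtained by freezing the delayed term as a known continuous source. The crucial point is that this construction extends to all $t\geq0$ rather than blowing up in finite time; this is where the boundedness results do the real work. For $\mu<0$, Lemma~\ref{lem:1delbound} confines the solution to the compact set between $M$ and $N$ (in the sign-adjusted sense), so the solution cannot escape to infinity and hence continues for all time. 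For $\mu\geq0$ the one-sided bound $\textit{sign}(c)u(t)\geq\textit{sign}(c)M_0$ from Lemma~\ref{lem:delayed} bounds the solution on the side that matters, and on the other side the linear-plus-bounded-source structure of \eqref{Eq:1Delay} gives at most exponential growth on any finite interval, again precluding finite-time blow-up. The bounds \eqref{eq:ubd} and the $\mu\geq0$ bound are then immediate from Lemmas~\ref{lem:delayed} and~\ref{lem:1delbound}.

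For uniqueness under locally Lipschitz $\phi$, I would invoke the theory of Driver~\cite{Driver:1} already cited after Assumption~\ref{Assume:DDE}: once the delay is bounded away from producing a non-delayed argument (Lemma~\ref{lem:delayed}) and $f(t,u,v)=\mu u+\sigma v$ together with $\tau(u)=a+cu$ is locally Lipschitz in its $\R$-arguments, Driver's conditions for local existence and uniqueness are met, and uniqueness propagates forward by the method of steps on successive intervals. The \emph{a priori} bounds ensure the unique local solution extends to a unique global one.

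The main obstacle I anticipate is the interplay between the \emph{state-dependence} of the delay and the method of steps: because the delay $a+cu(t)$ varies with the solution, the step length is not fixed and one must verify that the deviating argument stays strictly below $t$ uniformly enough to make the inductive step-by-step construction well defined. This is exactly what Lemma~\ref{lem:delayed} secures (and why the hypothesis $c\phi(0)\geq-a$, equivalently $\textit{sign}(c)\phi(0)\geq\textit{sign}(c)M_0$, is imposed). A secondary subtlety is that the delay can in principle vanish, so one cannot assume a uniform positive lower bound on the delay; fortunately the existence and bound arguments here do not require the delay to be bounded away from zero, only that $t-a-cu(t)<t$, so vanishing delays cause no difficulty for this particular result.
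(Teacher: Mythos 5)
There is a genuine gap: your existence construction rests on the method of steps, and the method of steps is not available for this problem under the stated hypotheses. The theorem permits $\textit{sign}(c)\phi(0)=\textit{sign}(c)M_0$ (the inequality is not strict), in which case the delay $a+cu(t)$ vanishes at $t=0$. Taking $c>0$ and $\phi(0)=M_0=-a/c$, one has $\dot u(0)=(\mu+\sigma)M_0>0$, so $a+cu(t)\approx -a(\mu+\sigma)\,t$ for small $t>0$, and the deviated argument satisfies $t-a-cu(t)\approx t\bigl(1+a(\mu+\sigma)\bigr)$. Whenever $-1/a<\mu+\sigma<0$ (allowed by the hypotheses, e.g.\ $\mu=0$, $\sigma=-1/(2a)$), this lies in $(0,t)$: the delayed term refers to values of $u$ on the very interval you are trying to construct, so there is no already-known source to freeze and no first step from which to start the induction. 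This is exactly the situation the paper warns about before Lemma~\ref{Lem:FiniteBound} (``the method of steps is not applicable with vanishing delays''). Your own closing remark --- that vanishing delays cause no difficulty because only $t-a-cu(t)<t$ is needed --- contradicts your construction: Lemma~\ref{lem:delayed} gives strict pointwise positivity of the delay, not the quantitative control needed either to start the stepping at $t=0$ or to guarantee that variable-length steps exhaust $[0,\infty)$ without an additional compactness/limit-point argument.

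The paper's proof avoids this entirely: it invokes Driver's theory \cite{Driver:1} not only for uniqueness (as you do) but for local existence as well, since Driver's framework is designed for state-dependent, possibly vanishing delays and needs only the Lipschitz structure of $f(t,u,v)=\mu u+\sigma v$ and $\tau(u)=a+cu$ near the relevant sets. Given local existence, the only genuinely delicate point is global continuation for $0\leq\mu<-\sigma$, where Lemma~\ref{lem:delayed} yields only a one-sided bound; there the paper uses $\dot u(t)\leq\mu u(t)+\sigma M_0$ and Gronwall to obtain the exponential bound \eqref{eq:4thquadexpbd}, precluding finite-time blow-up --- a step your ``linear-plus-bounded-source'' remark captures in substance (the paper also notes this is why $\phi$ must be prescribed on all of $(-\infty,0]$ when $\mu\geq0$). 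If you reroute local existence, and the propagation of uniqueness, through Driver's results rather than through stepping, the rest of your outline matches the paper's argument.
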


\begin{proof}
Local existence and uniqueness follows directly from the results of Driver~\cite{Driver:1}, and for $\mu<0$
global existence and uniqueness follows from the extended existence result of Driver~\cite{Driver:1} using
the bounds on the delay and solution given by Lemma~\ref{lem:delayed} and~\ref{lem:1delbound}. The only delicate
case is for $-\sigma>\mu>0$ for which Lemma~\ref{lem:delayed} gives one of the bounds, $\text{sign}(c)u(t)\geq M_0$.
We consider this case with $c>0$. Then,
$\dot{u}(t)\leq \mu u(t)+\sigma M_0$ and the Gronwall lemma implies that
\begin{equation} \label{eq:4thquadexpbd}
u(t)\leq\bigl(\phi(0)+\tfrac{\sigma}{\mu}M_0\bigr)e^{\mu t}-\tfrac{\sigma}{\mu}M_0
=(\phi(0)-N_0)e^{\mu t}+N_0.
\end{equation}
Since $\phi(0)\geq M_0>N_0$ in this case,
solutions cannot become unbounded in finite time, and global existence again follows. For this case
$\phi(t)$ should be defined for all $t\leq0$ since with the
exponentially growing bound \eqref{eq:4thquadexpbd} on $u(t)$ it is possible that $t-a-cu(t)\to-\infty$ as $t\to+\infty$.
\end{proof}

The constant delay DDE  which corresponds to \eqref{Eq:1Delay} with $c=0$, known as Hayes equation, has
been much studied.  The
$(\mu,\sigma)$ values for which its steady state
is asymptotically stable when $a>0$ and $c=0$ are well known (see eg. \cite{HaleLunel:1}) and given in Definition~\ref{Defn:StabilityRegion}.

\begin{defn}[Stability region $\Sigma_\star$]
\label{Defn:StabilityRegion}
Let $a>0$ and $c=0$. Let $\Sigma_\star$ be the open set of the $(\mu,\sigma)$-parameter space between the curves
\[ \ell_\star = \bigl\{(s,-s) : s\in(-\infty,1/a]\bigr\},
\quad g_\star= \bigl\{(\mu(s),\sigma(s)) : s\in(0,\pi/a)\bigr\} \]
where the functions $\mu(s)$ and $\sigma(s)$ are given by
\begin{equation}  \label{Eq:musigma-bounds}
  \mu(s)  =  s\cot(as), \quad  \sigma(s)  = - s\csc(as).
\end{equation}
The region $\Sigma_\star$ is further divided into three subregions: the cone $\cone=\left\{ (\mu,\sigma) : |\sigma|<-\mu\right\}$, the wedge $\wedg=(\Sigma_\star \setminus \cone )\cap \left\{\mu< 0\right\}$ and the cusp $\cusp=\Sigma_\star\cap \left\{\mu\geq0\right\}$, which are shown in Figure~\ref{Fig:ConstantDelay_divisions}.
\end{defn}

\begin{figure}[t]
\begin{center}
\includegraphics[scale=0.4]{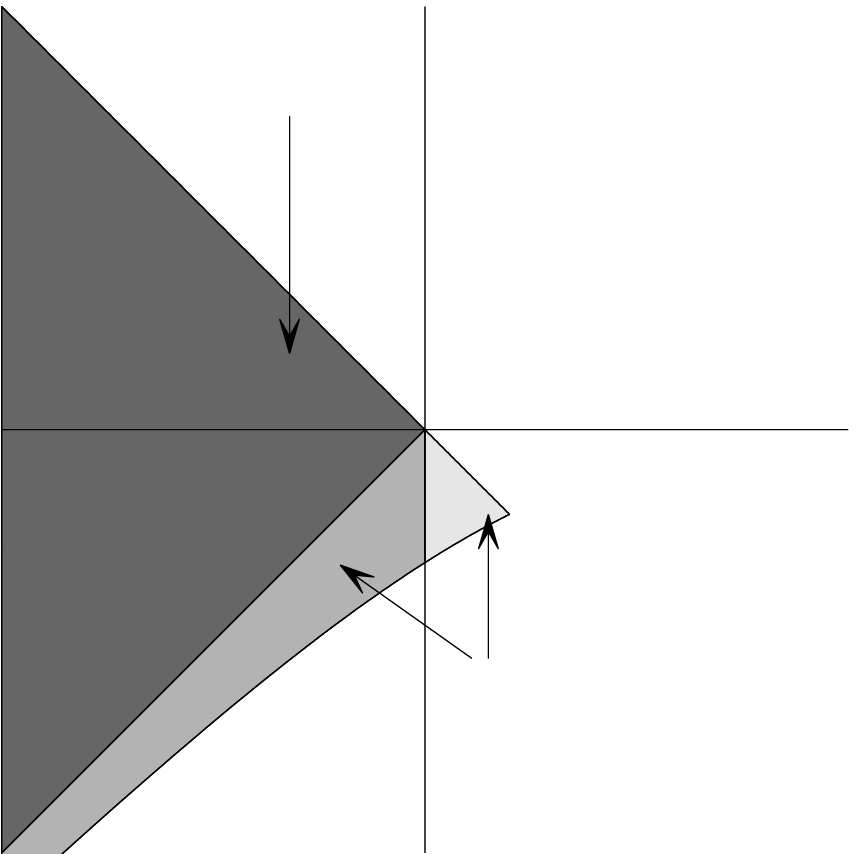}
\put(-75,30){{\footnotesize delay-dependent}}
\put(-145,145){{\footnotesize delay-independent}}
\put(-7,77){{\footnotesize $\mu$}}
\put(-90,1){{\footnotesize $\sigma$}}
\put(-150,60){{\footnotesize $\cone$}}
\put(-150,105){{\footnotesize $\cone$}}
\put(-80,67){{\footnotesize $\cusp$}}
\put(-120,38){{\footnotesize $\wedg$}}
\FigureCaption{\label{Fig:ConstantDelay_divisions}The analytic stability region $\Sigma_\star$ in the $(\mu,\sigma)$ plane, divided into the delay-independent cone $\cone$, and the delay-dependent
  wedge $\wedg$ and cusp $\cusp$.}
\end{center}
\end{figure}

The region $\Sigma_\star$ is the parameter region in the $(\mu,\sigma)$-plane for which the zero solution to the DDE \eqref{Eq:1Delay} is locally asymptotically stable
in both the constant and state-dependent delay cases.
The cone $\cone$ forms the delay-independent stability region (because this does not change when $a$ is changed) while $\wedg\cup\cusp$ is often referred to as the delay-dependent stability region.
For the constant delay case ($c=0$)
this region is found from the characteristic equation \cite{ElsgoltsNorkin:1}. The results of
Gy\"ori and Hartung \cite{GyoriHartung:1} show the state-dependent case ($c\ne 0$) of \eqref{Eq:1Delay} has the same (exponentially) asymptotic stability region. On the boundary of $\Sigma_\star$ the steady-state is
Lyapunov stable for the constant delay case, and the stability is delicate in the state-dependent case
\cite{Stumpf:1}.

In this paper we derive new proofs of stability in parts of $\Sigma_\star$ for the state-dependent case using Theorem~\ref{Thm:Technique}.
The asymptotic stability of the zero solution to \eqref{Eq:1Delay} in all of the delay-independent region
($(\mu,\sigma)\in\cone$) will be shown in Theorem~\ref{Thm:localcone}.
In
Theorem~\ref{Thm:StabilityDDE:Razumikhin_k=123}
we will also show asymptotic stability of the steady
state of the model problem \eqref{Eq:1Delay} for $(\mu,\sigma)$ in subsets of $\wedg\cup\cusp$, by applying
Theorem~\ref{Thm:Technique} with $k=1$ to $3$.
Here we define some notation that will be required. Let $(\mu,\sigma)\in\wedg\cup\cusp$, $k\in\mathbb{Z}$,
$k\geqslant 1$, $\delta_0\in (0,|a/c|)$ and $\delta\in(0,\delta_0)$. It is easy to see that
Assumption~\ref{Assume:DDE} is satisfied for \eqref{Eq:1Delay} with $L_0=|\mu|$, $L_1=|\sigma|$,
$\tau_{\max}=a$ and $r(\delta)=a+|c|\delta$.
Thus for the model problem  \eqref{Eq:1Delay} the sets $E_{(k)}(\delta,x)$ from Definition~\ref{Defn:E} are given by
\begin{equation}  \label{eq:Ekdxmodeq}
E_{(k)}(\delta,x)=\left\{\begin{array}{l}
\eta:  \eta \in \cC^{k-1}([-a-cx,0],[-\delta,\delta]), \; \mu x^2 + \sigma x \eta(0) \geqslant 0,  \\
 \mbox{}\qquad \text{and for some initial function $\varphi\in\cC$ equation \eqref{Eq:ScalarOneDelay} has solution}\\
\mbox{}\qquad\quad \eta(\theta) = u(kr(\delta)+\theta-a-cu(kr(\delta)+\theta)) \text{ for } \theta\in[-a-cx,0]
\end{array}\right\}.
\end{equation}
To apply the stability theorems in the next section
we will derive some bounds $\mathcal{D}_j(\delta)$ for $j=0,\dotsc,k-1$ and $\delta\in(0,\delta_0]$ as in Definition~\ref{Defn:ED}.
Once these bounds are determined, the sets $\mathcal{E}_{(k)}(\delta,x)$ from Definition~\ref{Defn:ED} are given by
\begin{equation}
\mathcal{E}_{(k)}(\delta,x)=\left\{\begin{array}{l}
\eta:  \eta \in \PC^{k-1}([-a-cx,0],[-\delta,\delta]), \; \mu x^2 + \sigma x \eta(0) \geqslant 0,  \\
\quad \bigl|\tfrac{d^j}{d\theta^j}\eta(\theta) \bigr|\leqslant \mathcal{D}_j(\delta)
\text{ for } \theta\in[-a-cx,0], j=0,\dotsc,k-1
\end{array}\right\}.
\label{Eq:E1Delay}
\end{equation}
Since the DDE \eqref{Eq:1Delay} is scalar the set of $x$ such that $|x|=\delta$ consists of just
two points $x=\delta$ and $x=-\delta$.
Suppose first that $x=\delta$, then \eqref{Eq:E1Delay} implies that
$\eta(0)\in[-\delta,-\delta\mu/\sigma]$.
Let $r_+=a+c\delta$ so that we can write the auxiliary ODE problem \eqref{Eq:AuxiliaryODE}
becomes
\begin{equation}\left\{
\begin{array}{ll}
   \dot v(\theta) =\mu v(\theta)+ \sigma \eta(\theta), & \theta\in[-r_+,0],\\
   v(-r_+)=\eta(0).&
 \end{array}\right.
 \label{Eq:AuxiliaryODESDD}
\end{equation}
Integrating \eqref{Eq:AuxiliaryODESDD} yields,
\begin{equation} \label{Eq:FullIntegration1}
v(0)=  \eta(0)e^{\mu r_+}  + \sigma \! \int_{-r_+}^{0} {e^{-\mu \theta}\eta(\theta)d\theta}.
\end{equation}

\begin{defn}
\label{Defn:StabilityDDE:etaIP}
Let $a>0$, $c\ne0$, $\sigma\leqslant\mu$ and $\sigma<-\mu$. For any $\delta\in(0,|a/c|)$
and $\hat u \in[-\delta,-\delta\mu/\sigma]$,
define $r_+=a+c\delta$ and $\eta_{(k)}(\theta)$ for $\theta\in[-r_+,0]$ by
\begin{equation}  \label{Eq:etak}
\eta_{(k)}(\theta) =  \inf_{\begin{subarray}{c}\eta\in \mathcal{E}_{(k)}(\delta,\delta)\\
\eta(0)=\hat u \end{subarray}}\eta(\theta).
\end{equation}
We also define the function $\mathcal I(\hat u,\delta,c,k)$ to be
\begin{equation} \label{Eq:Ick}
 \mathcal I(\hat u,\delta,c,k)= \hat{u} e^{\mu r_+}  + \sigma\!
 \int_{-r_+}^{0} {e^{-\mu\theta}\eta_{(k)}(\theta)d\theta}.
\end{equation}
\end{defn}

The function $\eta_{(k)}$ given by \eqref{Eq:etak}
is the most negative one
in $\mathcal{E}_{(k)}(\delta,\delta)$ satisfying $\eta(0)=\hat u$, and so
since $\sigma<0$, this function
maximizes $v(0)$ for fixed $\eta(0)$ by maximising the second term in \eqref{Eq:FullIntegration1}.
This is the reason for considering
$\eta \in \PC^{k-1}([-a-cx,0],[-\delta,\delta])$ in the definition of $\mathcal{E}_{(k)}(\delta,x)$.
We really want to maximise $v(0)$ for $\eta \in E_{(k)}(\delta,x)$, where the smaller set
$E_{(k)}(\delta,x)$ is defined in \eqref{eq:Ekdx}. Even though all the functions $\eta\in E_{(k)}(\delta,x)$
satisfy $\eta \in \cC^{k-1}([-a-cx,0],[-\delta,\delta])$, the maximiser will in general only be piecewise $\cC^{k-1}$.
With Definition~\ref{Defn:StabilityDDE:etaIP} we can derive bounds on the solution $v(0)$ of the auxiliary ODE \eqref{Eq:AuxiliaryODESDD} for
all $\eta\in\mathcal{E}_{(k)}(\delta,x)$ in both cases where $x=\pm\delta$.

\begin{lem}
\label{Lem:v<P}
Let $a>0$, $c\ne0$, $\sigma\leqslant\mu$ and $\sigma<-\mu$. Let $\delta\in(0,|a/c|)$.
The solution of the auxiliary ODE system \eqref{Eq:AuxiliaryODESDD}
satisfies
$$v(0)\leq \sup_{\hat u \in \left[-\delta,-\frac{\mu}{\sigma}\delta\right]}\!\!\mathcal{I}(\hat u,\delta,c,k), \;
\forall\eta\in\mathcal{E}_{(k)}(\delta,\delta), \quad
v(0)\geq -\!\!\sup_{\hat u \in \left[-\delta,-\frac{\mu}{\sigma}\delta\right]}\!\!\mathcal{I}(\hat u,\delta,-c,k), \;
\forall\eta\in\mathcal{E}_{(k)}(\delta,-\delta).$$
\end{lem}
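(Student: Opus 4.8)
The plan is to establish the two inequalities separately: the upper bound over $\mathcal{E}_{(k)}(\delta,\delta)$ by a direct monotonicity argument, and the lower bound over $\mathcal{E}_{(k)}(\delta,-\delta)$ by exploiting the reflection symmetry of \eqref{Eq:1Delay}. First I would record two elementary consequences of the standing hypotheses: from $\sigma\leqslant\mu$ and $\sigma<-\mu$ one gets $\sigma<0$, and from $\delta\in(0,|a/c|)$ one gets $r_\pm:=a\pm c\delta\in(0,2a)$, so that the integrated solution formula \eqref{Eq:FullIntegration1} is valid on $[-r_+,0]$ and on $[-r_-,0]$ for the two cases $x=\pm\delta$.

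For the first inequality I would fix any $\eta\in\mathcal{E}_{(k)}(\delta,\delta)$ and put $\hat u=\eta(0)$. The sign constraint $\mu x^2+\sigma x\eta(0)\geqslant0$ with $x=\delta>0$ and $\sigma<0$ forces $\hat u\in[-\delta,-\mu\delta/\sigma]$, the interval being nonempty exactly because $\mu\geqslant\sigma$. The crux is then a one-sided estimate: by \eqref{Eq:etak}, $\eta_{(k)}$ is the pointwise infimum of all competitors in $\mathcal{E}_{(k)}(\delta,\delta)$ sharing the endpoint value $\hat u$, so $\eta(\theta)\geqslant\eta_{(k)}(\theta)$ on $[-r_+,0]$. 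Since $e^{-\mu\theta}>0$ and $\sigma<0$, multiplying through and integrating reverses the inequality, and \eqref{Eq:FullIntegration1} gives $v(x,\eta)(0)\leqslant\hat u\,e^{\mu r_+}+\sigma\int_{-r_+}^{0}e^{-\mu\theta}\eta_{(k)}(\theta)\,d\theta=\mathcal{I}(\hat u,\delta,c,k)$. Taking the supremum over $\hat u\in[-\delta,-\mu\delta/\sigma]$ yields the first claim. I would emphasise that $\eta_{(k)}$ need not itself belong to $\mathcal{E}_{(k)}(\delta,\delta)$: it enters only as an integrable pointwise lower bound for the integrand, which is all the one-sided estimate needs.

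For the second inequality I would invoke the invariance of \eqref{Eq:1Delay} under $u\mapsto-u$, $c\mapsto-c$ already used in Lemma~\ref{lem:delayed}. Given $\eta\in\mathcal{E}_{(k)}(\delta,-\delta)$ for parameter $c$, I claim the reflected function $\tilde\eta:=-\eta$ lies in $\mathcal{E}_{(k)}(\delta,\delta)$ for parameter $-c$: its domain $[-(a-c\delta),0]$ is exactly the $x=\delta$ domain $[-(a+(-c)\delta),0]$ for the $-c$ problem; the sign constraint becomes $\mu\delta^2-\sigma\delta\eta(0)\geqslant0$ in both settings; and the derivative bounds $|\tilde\eta^{(j)}|=|\eta^{(j)}|\leqslant\mathcal{D}_j(\delta)$ are preserved because the defining relation \eqref{Eq:Dj} is itself invariant under the reflection, so the same $\mathcal{D}_j$ serve both problems. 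By linearity of \eqref{Eq:FullIntegration1} (with the delay length $a+(-c)\delta=r_-$ for the $-c$ problem matching that of the $x=-\delta$ case), the two auxiliary solutions satisfy $v(-\delta,\eta)(0)=-v(\delta,\tilde\eta)(0)$, where the latter is the auxiliary solution of the $-c$ problem. Applying the already-proved first inequality to the $-c$ problem bounds $v(\delta,\tilde\eta)(0)$ above by $\sup_{\hat u}\mathcal{I}(\hat u,\delta,-c,k)$, and negating delivers the asserted lower bound on $v(-\delta,\eta)(0)$.

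I expect the main difficulty to be bookkeeping rather than analysis: keeping the two delay lengths $r_\pm=a\pm c\delta$, the endpoint constraint on $\eta(0)$, and the sign of $\sigma$ mutually consistent across the reflection, and in particular verifying carefully that the reflected admissible set for $(x,c)=(-\delta,c)$ coincides with that for $(x,c)=(\delta,-c)$, including the invariance of the bounds $\mathcal{D}_j$. The only genuinely analytic point to watch is measurability and integrability of the pointwise infimum $\eta_{(k)}$, but since every competitor is bounded by $\delta$ this is immediate and the integral defining $\mathcal{I}$ is finite.
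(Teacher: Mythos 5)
Your proposal is correct and follows essentially the same route as the paper's proof: the upper bound comes from the pointwise inequality $\eta\geqslant\eta_{(k)}$ combined with $\sigma<0$ in the integrated formula \eqref{Eq:FullIntegration1}, and the lower bound comes from the invariance of \eqref{Eq:1Delay} under $(u,c)\mapsto(-u,-c)$. Your write-up is merely more explicit than the paper's (in particular in verifying that the reflection maps $\mathcal{E}_{(k)}(\delta,-\delta)$ for parameter $c$ onto $\mathcal{E}_{(k)}(\delta,\delta)$ for parameter $-c$, and in noting that $\eta_{(k)}$ need only be a pointwise lower bound rather than a member of the set), which is a welcome clarification but not a different argument.
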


\begin{proof}
First consider $x=\delta$. The function $\mathcal I(\hat u,\delta,c,k)$ comes from \eqref{Eq:FullIntegration1}
and depends on $c$ and $\delta$ through $r_+$.
Since,
as noted above, the choice
of $\eta_{(k)}$
maximizes
\eqref{Eq:Ick} for fixed $\hat u$,
the first inequality in the statement of the lemma follows.

Next consider $x=-\delta$, then \eqref{Eq:E1Delay} implies that
$\eta(0)\in[\delta\mu/\sigma,\delta]$.
This time we should consider the most positive function in
$\mathcal{E}_{(k)}(\delta,-\delta)$ satisfying $\eta(0)=\hat u\in[\delta\mu/\sigma,\delta]$,
to obtain a lower bound on $v(0)$ for all $\eta\in\mathcal{E}_{(k)}(\delta,-\delta)$. However,
the model DDE \eqref{Eq:1Delay} is invariant under the transformation $(u,c)\mapsto(-u,-c)$,
so this function is $-\eta_{(k)}(\theta)$ and the second inequality follows.
\end{proof}

Notice from \eqref{Eq:Ick} that the functions $\mathcal{I}(\hat u,\delta,c,k)$ and $\mathcal{I}(\hat u,\delta,-c,k)$
only differ in their integration limits with $\mathcal{I}(\hat u,\delta,c,k)$ integrating $\eta_{(k)}$ over the
interval $[-a-c\delta,0]$ and $\mathcal{I}(\hat u,\delta,-c,k)$ integrating over $[-a+c\delta,0]$. The integration
over the larger of these intervals will be important in the following sections and so it is convenient to define
\begin{equation} \label{Eq:P}
P(\delta,c,k)=
\sup_{\hat u \in[-\delta,-\delta\mu/\sigma]}\mathcal{I}(\hat u,\delta,|c|,k).
\end{equation}
Comparing the cases when $x=\delta$ and $-\delta$ has to be done separately for each value of $k$, and we can also
explicitly define the functions $\eta_{(k)}$ for each $k$. This is handled in the following section where we show that $P(\delta,c,k)<\delta$ implies $\mathcal{F}(\delta)<\delta$, and
apply Theorem~\ref{Thm:Technique} to obtain asymptotic stability for
$\bigl\{(\mu,\sigma) : P(\delta,c,k)<\delta\bigr\}$.

Barnea \cite{Barnea:1} applied Lyapunov-Razumikhin techniques
to the $c=0$ constant delay case of the model DDE \eqref{Eq:1Delay}.
His results do not apply to state-dependent case, as they were based on a result for
autonomous RFDEs which assumed $F$ was Lipschitz, and he did not define an
auxiliary ODE, nor sets similar to $E_{(k)}(\delta,x)$ or $\mathcal{E}_{(k)}(\delta,x)$.
However, he did define functions $\eta_{(k)}$ for the constant delay case by considering the most
negative bounded functions with $k-1$ bounded derivatives as the function segments in the RFDE.
In the limit as $c\to0$ our $\eta_{(k)}$ functions reduce to those found by Barnea for the constant delay case.
Because of the linearity of \eqref{Eq:1Delay} with $c=0$, Barnea did not have to consider the upper and
lower bounds separately as we did in Lemma~\ref{Lem:v<P}, but did define a function which is equivalent to
$P(\delta,0,k)$ in \eqref{Eq:P}. Our asymptotic stability
results for the state-dependent model DDE \eqref{Eq:1Delay} constitute a significant
generalisation of the Lyapunov stability results of Barnea \cite{Barnea:1} for the constant delay case, and moreover
in Section~\ref{Sec:StabilityDDE:Measurements}
we will correct an error of Barnea for the $k=2$ constant delay case.

\section{Asymptotic stability for $\dot u(t)=\mu u(t) +\sigma u(t-a-cu(t))$
using $\mathcal{E}_{(k)}(\delta,x)$}
\label{Sec:Stability:k=123}

In this section we consider the model state-dependent DDE \eqref{Eq:1Delay} and
use Theorem~\ref{Thm:Technique} to show that the steady state is asymptotically stable
in various parameter sets.  In Theorem~\ref{Thm:localcone} we use the set $\mathcal{E}_{(1)}(\delta,x)$
to show that the steady state is asymptotically stable
whenever the parameters values $(\mu,\sigma)$ are in the cone $\cone$. In the rest of the section
we consider parameters in the wedge and the cusp ($\wedg\cup\cusp$),
and use Theorem~\ref{Thm:Technique} with $k=1$, $2$ and $3$ to
show the steady state is asymptotically stable
for the parameter sets $\bigl\{(\mu,\sigma):P(1,0,k)<1\bigr\}$,
where $P(\delta,c,k)$ is defined in \eqref{Eq:P}.
We will often write $\bigl\{P(1,0,k)<1\bigr\}$ as shorthand for these sets which
are nested in $\wedg\cup\cusp$, becoming larger with $k$.
We also find lower bounds on the basin of attraction of the steady state.
For the constant delay case ($c=0$) the parameter regions
found in $\wedg\cup\cusp$ are independent of the choice of $\delta$ in $\mathcal{E}_{(k)}(\delta,x)$.
For the state-dependent case, these regions change with $c$ and $\delta$ (see Figure~\ref{Fig:StabilityExample_k=12})
and converge to the region for the constant delay case as $\delta\to 0$.

We begin by showing asymptotic stability in the cone $\cone$.
The following result could also be shown by adapting a stability result for time-dependent delays, such as that of Yorke \cite{Yorke:1}.
Recall that $M_0$ is defined by \eqref{Eq:1Delay:L0M0tau0}.

\begin{thm}[Asymptotic stability for \eqref{Eq:1Delay} in $\cone$] \label{Thm:localcone}
Let $a>0$, $c\ne0$ and $|\sigma|<-\mu$ so $(\mu,\sigma)\in\cone$.
If $|\varphi(t)|\leq|M_0|$ for $t\in[-a-c|M_0|,0]$ then the solution $u(t)$ to \eqref{Eq:1Delay} satisfies $u(t)\to 0$ as $t\to\infty$.
\end{thm}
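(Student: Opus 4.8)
The plan is to apply Theorem~\ref{Thm:Technique} with $k=1$, for which $\mathcal{E}_{(1)}(\delta,x)$ carries no derivative bounds and reduces to continuous functions $\eta:[-\tau(0,x),0]\to[-\delta,\delta]$ (so $\mathcal{D}_0(\delta)=\delta$). Since $(\mu,\sigma)\in\cone$ we have $\mu<0$ and $|\sigma|<-\mu=|\mu|$, and Assumption~\ref{Assume:DDE} holds with $L_0=|\mu|$, $L_1=|\sigma|$, $r(\delta)=a+|c|\delta$. For $\delta\in(0,|M_0|)$ the delay $\tau(0,x)=a+cx$ is strictly positive for both $x=\delta$ and $x=-\delta$ (because $|c|\delta<|c||M_0|=a$), so the auxiliary ODE \eqref{Eq:AuxiliaryODESDD} is genuinely defined. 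By \eqref{stronger} it suffices to verify \eqref{odecond} on the larger sets $\mathcal{E}_{(1)}(\delta,x)$, and by the scalar structure it is enough to control $|v(0)|$.

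The key step is a direct estimate of the auxiliary solution. From the integral representation \eqref{Eq:FullIntegration1}, with $\tau=\tau(0,x)>0$ and $|\eta(\theta)|\le\delta$,
\[
|v(0)|\le\delta e^{\mu\tau}+|\sigma|\delta\int_{-\tau}^{0} e^{-\mu\theta}\,d\theta
=\delta\Bigl(e^{\mu\tau}+\tfrac{|\sigma|}{|\mu|}\bigl(1-e^{\mu\tau}\bigr)\Bigr).
\]
Since $e^{\mu\tau}\in(0,1)$ and, crucially, $|\sigma|/|\mu|<1$ in $\cone$, the bracket is a strict convex combination of $e^{\mu\tau}$ and $1$ lying below $1$, so $|v(0)|<\delta$. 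For $x=\delta$ this gives $\tfrac1\delta v(0)\cdot x=v(0)<\delta$, and for $x=-\delta$ it gives $\tfrac1\delta v(0)\cdot x=-v(0)<\delta$; in both cases \eqref{odecond} holds. As this is true for every $\delta\in(0,|M_0|)$, Theorem~\ref{Thm:Technique} yields asymptotic stability of the zero solution. Note that the constraint $\mu x^2+\sigma x\eta(0)\ge0$ is not needed here: the bound holds for every admissible $\eta$.

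Theorem~\ref{Thm:Technique} by itself only delivers a small basin, so to reach the stated basin $|\varphi(t)|\le|M_0|$ I would rerun the contradiction argument in its proof, replacing the a priori bound there by the global bounds specific to \eqref{Eq:1Delay}. By Lemmas~\ref{lem:delayed} and~\ref{lem:1delbound}, together with Theorem~\ref{Thm:1Delay_existence_uniqueness} for existence, any solution from this data exists for all $t\ge0$ and is confined to the interval between $M$ and $N$. Setting $\delta_\infty=\limsup_{t\to\infty}|u(t)|$ and extracting $t_i\to\infty$ with $u(t_i)\to x_\infty$, $|x_\infty|=\delta_\infty$, the Arzel\`a--Ascoli construction of Theorem~\ref{Thm:Technique} produces $\eta\in E_{(1)}(\delta_\infty,x_\infty)$ with $v(0)\cdot x_\infty=\delta_\infty^2$, contradicting \eqref{odecond} at $\delta=\delta_\infty$ unless $\delta_\infty=0$.

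The main obstacle is exactly this last step: the confinement from Lemma~\ref{lem:1delbound} touches the boundary value $M_0$, which is precisely where $r(\delta)$ permits a vanishing delay and the auxiliary ODE degenerates, so one cannot simply invoke \eqref{odecond} at $\delta_\infty=|M_0|$. I would close this gap with a comparison (Gronwall) estimate: once the delayed term is known to lie in the interval of Lemma~\ref{lem:1delbound}, the inequality $\dot u<\mu u+|\sigma|\,|M_0|$ (and its symmetric lower counterpart) forces $u$ asymptotically into $[-\rho,\rho]$ with $\rho=(|\sigma|/|\mu|)|M_0|<|M_0|$, whence $\delta_\infty\le\rho<|M_0|$ and \eqref{odecond} does apply at $\delta_\infty$. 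The cases $\sigma>0$ and $\sigma\le0$, and the sign of $c$, are then handled uniformly via the $(u,c)\mapsto(-u,-c)$ symmetry of \eqref{Eq:1Delay} together with the corresponding choice of $[M,N]$ in Lemma~\ref{lem:1delbound}.
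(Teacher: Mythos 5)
Your proof is correct, but its core verification step takes a genuinely different route from the paper's. The paper's argument is essentially one line: in $\cone$ we have $\mu<-|\sigma|$, so the Razumikhin sign constraint $\mu x^2+\sigma x\eta(0)\geqslant 0$ built into $\mathcal{E}_{(1)}(\delta,x)$ can never be met when $|x|=\delta$ and $|\eta(0)|\leqslant\delta$; hence $\mathcal{E}_{(1)}(\delta,x)$ is \emph{empty}, \eqref{odecond} holds vacuously, and this is true for every $\delta\in(0,|M_0|]$, endpoint included. You instead discard the sign constraint and verify \eqref{odecond} by a direct estimate on \eqref{Eq:FullIntegration1}, showing the auxiliary ODE strictly contracts for \emph{every} $\eta$ with $\|\eta\|\leqslant\delta$; this is a stronger and more informative statement (it makes visible why $\cone$ is the delay-independent region), but it is exactly what creates the boundary difficulty you flag: your estimate requires $\tau(0,x)>0$ and degenerates to $|v(0)|\leqslant\delta$ where the delay vanishes, which happens at one of the two points with $|x|=|M_0|$. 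The paper's vacuity argument suffers no such degeneration --- the sign condition is impossible whether or not the delay vanishes --- so it needs no patch at $\delta_\infty=|M_0|$. Your patch is nevertheless valid: since both the history and the solution are confined by Lemma~\ref{lem:1delbound} to $|u|\leqslant|M_0|$, the comparison inequality $\dot u(t)\leqslant\mu u(t)+|\sigma|\,|M_0|$ (and its mirror image) gives $\limsup_{t\to\infty}|u(t)|\leqslant(|\sigma|/|\mu|)|M_0|<|M_0|$, placing $\delta_\infty$ strictly inside the region where your estimate applies. The basin-upgrade step is identical in the two proofs: the exponential factor $e^{-Lkr}$ in Theorem~\ref{Thm:Technique} enters only through Lemma~\ref{Lem:FiniteBound}, whose job (keeping the solution bounded initially) is done for the model problem by the \emph{a priori} confinement of Lemma~\ref{lem:1delbound}. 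Net effect: your route costs an extra Gronwall argument that the paper's emptiness observation renders unnecessary, but buys an explicit contraction estimate and the observation that the conclusion in $\cone$ does not depend on the Razumikhin constraint at all.
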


\begin{proof} 
With $|x|=\delta$ and $\mu<-|\sigma|$ it is impossible to satisfy $\mu x^2 + \sigma x \eta(0) \geqslant 0$
with $|\eta(0)|\leq\delta$. Thus $\mathcal{E}_{(1)}(\delta,x)$ and also $E_{(1)}(\delta,x)$ are empty, and asymptotic stability of the steady state
follows from Theorem~\ref{Thm:Technique}. This holds for all $\delta\in(0,|M_0|]$ and it follows directly from
Theorem~\ref{Thm:Technique} that $u(t)\to 0$ as $t\to\infty$ provided
$|\varphi(t)|<|M_0|e^{-Lr(|M_0|)}$ for $t\in[-a-c|M_0|,0]$. However, the exponential correction term $e^{-Lr(|M_0|)}$ comes
from using Lemma~\ref{Lem:FiniteBoundI} in the proof of Theorem~\ref{Thm:Technique} to ensure that $|u(t)|<|M_0|$
for $t\in[0,r(|M_0|)]$. But Lemma~\ref{lem:1delbound} already ensures that $|u(t)|<|M_0|$ for
all $t>0$ if $|\varphi(t)|\leq|M_0|$ for the model DDE \eqref{Eq:1Delay} (note that in $\cone$ we have $|M_0|>|N_0|$); the result follows.
\end{proof}


We already noted in Section~\ref{Sec:ModelEquation} that Assumption~\ref{Assume:DDE} is satisfied
for \eqref{Eq:1Delay}, and
derived an expression for $\mathcal{E}_{(k)}(\delta,x)$
and indicated which are the most relevant functions in these sets.
For $k=1$ we do not need any bounds $\mathcal{D}_j(\delta)$ and the members of the set
$\mathcal{E}_{(1)}(\delta,x)$ need not be continuous. Then the function $\eta_{(1)}$
from \eqref{Eq:etak} is given by,
\begin{equation}
 \eta_{(1)}(\theta) =  \left\{ {\begin{array}{rl}
    -\delta , & \theta\in[-a-c\delta,0),  \\
   \hat{u}, & \theta=0.
 \end{array} }\right.\label{Eq:eta1}
\end{equation}

For $k=2$ we need to
find $\mathcal{D}_1(\delta)$ such that
$|\tfrac{d}{d\theta}\eta(\theta)|\leqslant\mathcal{D}_1(\delta)$ for all
$\eta\in E_{(2)}(\delta,x)$, where $E_{(2)}(\delta,x)$ is defined by \eqref{eq:Ekdxmodeq}.
For $\eta\in E_{(2)}(\delta,x)$ we have
$$\eta(\theta) = u(2r+\theta-a-cu(2r+\theta)) \text{ for } \theta\in[-a-cx,0]\subseteq[-r,0],$$
where $|u(t)|\leq\delta$ for $t\in[-r,2r]$ and solves \eqref{Eq:1Delay} for $r\geq0$.
We easily derive that
$|\dot{u}(t)|\leq |\mu u(t)|+|\sigma u(t-a-cu(t))| \leq (|\mu| +|\sigma|)\delta$ for $t\in[0,2r]$.
Then
$$\eta'^{\!}(\theta)=\tfrac{d}{d\theta}u\bigl(2r+\theta-a-cu(2r+\theta)\bigr)
=\bigl(1-c\dot{u}(2r+\theta)\bigr)\dot{u}\bigl(2r+\theta-a-cu(2r+\theta)\bigr).$$
Hence $|\eta'^{\!}(\theta)|\leq D_1\delta$ for $\theta\in[-r,0]$ where
\begin{equation}  \label{Eq:D1}
D_1 = \bigl(|\mu|+|\sigma|\bigr)\bigl(1+(|\mu|+|\sigma|)|c|\delta\bigr).
\end{equation}
Thus we can choose $\mathcal{D}_1(\delta)=D_1\delta$ (note that $D_1$ also depends on $\delta$) to
define $\mathcal{E}_{(2)}(\delta,x)$ and we obtain that
$E_{(2)}(\delta,x)\subseteq\mathcal{E}_{(2)}(\delta,x)$.
The function $\eta_{(2)}$ is given by
\eqref{Eq:etak}, as 
\begin{equation}  \label{Eq:eta2}
\eta_{(2)}(\theta) = \begin{cases}
\hat u +D_1\delta\theta, & \theta\in\bigl[-\frac{\delta+\hat u}{D_1\delta},0 \bigr],\\
- \delta, & \theta\in\bigl[-a-c\delta,-\frac{\delta+\hat u}{D_1\delta} \bigr],\quad\text{when }
\frac{\delta+\hat u}{D_1\delta}<r_+=a+c\delta.
\end{cases}
\end{equation}
where $D_1$ is defined by \eqref{Eq:D1}.

For $\eta\in E_{(3)}(\delta,x)\subseteq E_{(2)}(\delta,x)$, the same bound on the first derivative of $\eta$ applies, and
we also bound the second derivative as follows. We have
$$\eta(\theta) = u(3r+\theta-a-cu(3r+\theta)) \text{ for } \theta\in[-a-cx,0]\subseteq[-r,0],$$
where $|u(t)|\leq\delta$ for $t\in[-r,3r]$ and solves \eqref{Eq:1Delay} for $r\geq0$. As
above 
we have that  $|\dot{u}(t)|\leq (|\mu|+|\sigma|)\delta$ for $t\in[0,3r]$. Now noting that
$t-a-cu(t)\in[0,2r]$ for $t\in[r,3r]$ we have
$$|\ddot{u}(t)| = \bigl|\mu\dot{u}(t)+\sigma\dot{u}(t-a-cu(t)(1-c\dot u(t))\bigr|
\leq 
(|\mu|+|\sigma|)^2(1+|\sigma c|\delta)\delta,$$
for $t\in[r,3r]$. Then, since $3r+\theta-a-cu(3r+\theta)\in[r,2r]$ for $\theta\in[-r,0]$ it follows that
\begin{align*}
|\eta''^{\!}(\theta)|&=\Bigl|\tfrac{d^2}{d\theta^2}u\bigl(3r+\theta-a-cu(3r+\theta)\bigr)\Bigr|\\
& =\Bigl|\bigl(1-c\dot{u}(3r+\theta)\bigr)^2\ddot{u}\bigl(3r+\theta-a-cu(3r+\theta)\bigr)
-c\ddot{u}(3r+\theta)\bigr)\dot{u}\bigl(3r+\theta-a-cu(3r+\theta)\bigr)\Bigr|\\
& \leq
\bigl(D_1^2 +(|\mu|+|\sigma|)^3|c|\delta\bigr)\bigl(1+|\sigma c|\delta\bigr)\delta
=D_2 \delta.
\end{align*}
Hence for all $\eta\in E_{(3)}(\delta,x)$ we have $|\eta''^{\!}(\theta)|\leq\mathcal{D}_2(\delta)=D_2\delta$
where
\begin{equation}   \label{Eq:D2}
D_2=\bigl(D_1^2 +(|\mu|+|\sigma|)^3|c|\delta\bigr)\bigl(1+|\sigma c|\delta\bigr),
\end{equation}
and $\lim_{\delta\to0}D_2=(\lim_{\delta\to0}D_1)^2=(|\mu|+|\sigma|)^2$.
Taking $D_1$ and $D_2$ to satisfy
\eqref{Eq:D1} and \eqref{Eq:D2} ensures that
$\mathcal{E}_{(3)}(\delta,x)\subseteq E_{(3)}(\delta,x)$.
Then the $\eta_{(k)}$ function from \eqref{Eq:etak} for $k=3$ can be defined by
\begin{equation} \label{Eq:eta3}
\eta_{(3)}(\theta)=\bar \eta_{(3)}(\theta+\theta_{\text{shift}}), \quad \theta\in[-r_+,0]
\end{equation}
where
\begin{equation} \label{Eq:etabarthetashift_k=3}  
\mbox{}\hspace{-0.6em}\bar\eta_{(3)}(\theta) = \left\{ \!\! \begin{array}{ll}
-\delta,                             & \theta\leq0,  \\
-\delta+\frac{\delta}{2}D_2\theta^2, & \theta\in(0,\frac{D_1}{D_2}),\\
-\delta-\frac{\delta D_1^2}{2D_2}+\delta D_{1}\theta, & \theta>\frac{D_1}{D_2}.
 \end{array}  \right.
\hspace{0.5em}
\theta_{\text{shift}} = \left\{ \! \begin{array}{ll}
\left(\displaystyle\frac{2(\hat u + \delta )}{D_2\delta}\right)^{\frac12}\!,
& \hat u\in[-\delta,-\delta+\frac{\delta D_1^2}{2D_2}],\!\! \\
\displaystyle  \frac{\hat u + \delta+ \frac{\delta D_1 ^2}{2D_2}}{D_1\delta},
 & \hat u>-\delta+\frac{\delta D_1^2}{2D_2}.
 \end{array}  \right.  
\end{equation}
Here $\theta_{\text{shift}}$
is a convenient device which allows us to define $\eta_{(3)}(\theta)$ for all values of $\hat u$ by the single
function $\bar\eta_{(3)}(\theta)$ with the shift used to obtain the correct value of $\hat u$.

\begin{figure}[t]
\begin{center}
\subfigure[$\mu=1$, $\sigma=-1.2$, $\hat u =0$]{
\includegraphics[scale=1]{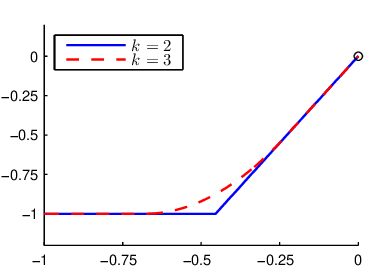}\label{fig:subfig1a}}
\subfigure[$\mu=0.4$, $\sigma=-0.6$, $\hat u =0$]{
\includegraphics[scale=1]{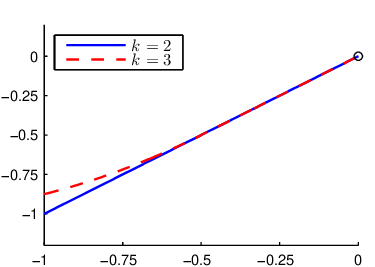}\label{fig:subfig2a}}
\put(-175,110){{\footnotesize $\eta(\theta)$}}
\put(-360,110){{\footnotesize $\eta(\theta)$}}
\put(-28,0){{\footnotesize $\theta$}}
\put(-210,0){{\footnotesize $\theta$}}
\put(-202,102){{\footnotesize $\hat{u}$}}
\put(-17,102){{\footnotesize $\hat{u}$}}
\FigureCaption{Sample $\eta_{(k)}(\theta)$
functions for $a=1$, $c=0$ and $\delta=1$. The $\eta_{(k)}(\theta)$
functions are defined in \eqref{Eq:eta2} and \eqref{Eq:eta3}.}
\label{Fig:etafunctions}
\end{center}
\end{figure}


The $\eta_{(k)}$ functions define $\mathcal I(\hat u,\delta,c,k)$ via equation \eqref{Eq:Ick} and
$P(\delta,c,k)$ through equation \eqref{Eq:P}. For $k=1$, using \eqref{Eq:eta1}
we easily evaluate
\begin{equation} \label{eq:Pdeltac1}
P(\delta,c,1)= \mathcal I\Bigl(-\mu\delta/\sigma,\delta,|c|,1\Bigr) =
\begin{cases}
-\dfrac{\mu}{\sigma}\delta e^{\mu r}  + \delta \dfrac{\sigma}{\mu}(1-e^{\mu r}), & \mu\ne0,\\
-\delta\sigma r, & \mu=0.
\end{cases}
\end{equation}


For $k=2$, from \eqref{Eq:Ick} and \eqref{Eq:eta2}, if $\tfrac{\delta+\hat u}{D_1\delta}\geq r_+$ then
\begin{align}
\mathcal{I}(\hat u,\delta,c,2)   \label{Eq:dI/dtau1def}
& = \hat ue^{\mu r_+}  + \sigma\int_{- r_+}^0 e^{ - \mu\theta}(\hat u + D_1\delta\theta)d\theta  \\
&  = \begin{cases}
\hat u\Bigl[ e^{\mu r_+}  + \frac{\sigma}{\mu}(e^{\mu r_+}- 1) \Bigr]
+ \frac{\sigma}{\mu}D_1\delta\Bigl[\frac{1}{\mu}(e^{\mu r_+}- 1) - r_+ e^{\mu r_+} \Bigr], & \mu\ne0,\\
\hat u  +\sigma r_+\hat u -\frac{\sigma D_1  r_+^2}{2}\delta, & \mu=0,
\end{cases}
\label{Eq:dI/dtau1}
\end{align}
while if $\tfrac{\delta+\hat u}{D_1\delta}< r_+$ then we have to split the integral into two parts and
\begin{align}
\mathcal{I}(\hat u,\delta,c,2)
& = \hat ue^{\mu r_+}  + \sigma \int_{ - r_+}^{-\frac{\delta+\hat u}{D_1\delta}}e^{-\mu\theta}(-\delta)d\theta
+ \sigma \int_{-\frac{\delta+\hat u}{D_1\delta }}^{0} e^{-\mu\theta}(\hat u + D_1\delta\theta)d\theta \label{Eq:dI/dtau2def} \\
&  = \begin{cases}
\hat u \left(e^{\mu r_+}-\frac{\sigma}{\mu}\right)
+ \frac{\sigma}{\mu}\delta\Bigl[\tfrac{D_1}{\mu}\Bigl(e^{\;\mu\tfrac{\delta+\hat u}{D_1\delta}}- 1\Bigr) - e^{\mu r_+} \Bigr], & \mu\ne0, \\
\hat u  -\sigma\delta r_+ +\frac{\sigma}{2D_1\delta}(\delta+\hat u)^2, & \mu=0.
\end{cases}  \label{Eq:dI/dtau2}
\end{align}

To determine $P(\delta,c,2)$ we perform the integration in $\mathcal{I}(\hat u,\delta,|c|,2)$ and find $\hat u$ to
maximise this function.
If $\tfrac{\delta+\hat u}{D_1\delta}\geq r$ then
$\hat u\in\bigl[(r D_1-1)\delta,-\delta\mu/\sigma\bigr]$.
This is only possible in the region where $r D_1-1 \leq -\mu/\sigma$.
From \eqref{Eq:dI/dtau1} we have
\begin{equation}  \label{Eq:I1}
\mathcal{I}(\hat u,\delta,|c|,2)
=\mathcal{I}_{\!1}(\hat u,\delta):=
\begin{cases}
\hat u\Bigl[ e^{\mu r}  + \frac{\sigma}{\mu}(e^{\mu r}- 1) \Bigr]
+ \frac{\sigma}{\mu}D_1\delta\Bigl[\frac{1}{\mu}(e^{\mu r}- 1) - r e^{\mu r} \Bigr], & \mu\ne0,\\
\hat u  +\sigma r\hat u -\frac{\sigma D_1  r^2}{2}\delta, & \mu=0.
\end{cases}
\end{equation}
If $\tfrac{\delta+\hat u}{D_1\delta}<r$ then $\hat u$ has another upper bound $\hat u <(rD_1-1)\delta$
so $\hat u \in \bigl[-\delta,\min\bigl\{(rD_1-1)\delta,-\tfrac{\mu}{\sigma}\delta\bigr\}\bigr]$.
Since the integration is broken down into two parts in this case we label the expression we derive
as $\mathcal{I}_{\!2}$, and from \eqref{Eq:dI/dtau2} we have
\begin{equation} \label{Eq:I2}
\mathcal{I}(\hat u,\delta,|c|,2)
=\mathcal{I}_{\!2}(\hat u,\delta):=
\begin{cases}
\displaystyle \hat u \Bigl(e^{\mu r}-\tfrac{\sigma}{\mu}\Bigr)
+ \tfrac{\sigma}{\mu}\delta\Bigl[\tfrac{D_1}{\mu}\Bigl(e^{\;\mu\tfrac{\delta+\hat u}{D_1\delta}}- 1\Bigr) - e^{\mu r} \Bigr], & \mu\ne0, \\
\displaystyle \hat u  -\sigma\delta r +\tfrac{\sigma}{2D_1\delta}(\delta+\hat u)^2, & \mu=0.
\end{cases}
\end{equation}
The main differences between the expressions for $\mathcal{I}(\hat u,\delta,c,2)$ and $\mathcal{I}(\hat u,\delta,|c|,2)$ are that
the former involve $r_+=a+c\delta$, and the latter use $r=a+|c|\delta$ as well as being subject to different
restrictions on the values of $\hat u$ for which they apply.
In \eqref{Eq:dI/dtau1},\eqref{Eq:dI/dtau2},\eqref{Eq:I1} and \eqref{Eq:I2} the $\mu=0$ expressions equal the $\mu\to0$ limit
of the $\mu\ne0$ expressions. Results for $\mu=0$ thus follow from those for $\mu\ne0$, and so we do
not treat these cases separately below.

\begin{thm}\label{Thm:StabilityDDE:simplify}
Let $a>0$, $c\ne 0$, $\sigma\leq\mu$ and $\sigma<-\mu$. Let $\delta\in(0,|a/c|)$.
If $P(\delta,c,2)<\delta$ then
\begin{equation} \label{eq:Pdeltac2}
P(\delta,c,2) = \left\{ \begin{array}{ll}
   \mathcal{I}_{\!1}(-\delta\mu/\sigma,\delta), & \text{if}\quad r D_1 - 1 \leq  -\mu/\sigma,\\
   \mathcal{I}_{\!2}(-\delta\mu/\sigma,\delta), & \text{if}\quad r D_1 - 1 >  -\mu/\sigma,
 \end{array} \right.
 \end{equation}
where $\mathcal{I}_{\!1}$ is defined by \eqref{Eq:I1} and $\mathcal{I}_{\!2}$ is defined by \eqref{Eq:I2}.
\end{thm}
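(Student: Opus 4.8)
The plan is to reduce the evaluation of $P(\delta,c,2)=\sup_{\hat u\in[-\delta,-\delta\mu/\sigma]}\mathcal I(\hat u,\delta,|c|,2)$ to a one-variable calculus problem and to show that, under the hypothesis $P(\delta,c,2)<\delta$, the supremum is attained at the right endpoint $\hat u=-\delta\mu/\sigma$. Here $\mathcal I(\hat u,\delta,|c|,2)$ is $\mathcal I_{\!2}$ of \eqref{Eq:I2} on $\{\hat u<(rD_1-1)\delta\}$ and $\mathcal I_{\!1}$ of \eqref{Eq:I1} on $\{\hat u\geq(rD_1-1)\delta\}$, with $r=a+|c|\delta$. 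First I would differentiate in $\hat u$ on each branch; a short calculation gives $\partial_{\hat u}\mathcal I=e^{\mu r}+\tfrac{\sigma}{\mu}\bigl(e^{\mu s}-1\bigr)$ with $s=\min\{(\delta+\hat u)/(D_1\delta),\,r\}$, and this expression is continuous across the breakpoint $\hat u=(rD_1-1)\delta$, so $\mathcal I(\cdot,\delta,|c|,2)$ is $C^1$ on the whole interval (the $\mu=0$ case being the limit as before). Since $\sigma<0$ throughout the region $\sigma\leq\mu$, $\sigma<-\mu$, the derivative is strictly decreasing in $\hat u$, hence $\mathcal I$ is strictly concave.

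Next I would evaluate the derivative at the relevant places. At the left endpoint $\hat u=-\delta$ (where $s=0$) it equals $e^{\mu r}>0$, so by concavity the maximiser is never the left endpoint and is either the right endpoint or the unique interior critical point where $\partial_{\hat u}\mathcal I=0$. The dichotomy in the statement is then just the location of the right endpoint relative to the breakpoint: the condition $rD_1-1\leq-\mu/\sigma$ places $\hat u=-\delta\mu/\sigma$ in the $\mathcal I_{\!1}$ branch and $rD_1-1>-\mu/\sigma$ places it in the $\mathcal I_{\!2}$ branch, which is exactly why evaluating at the right endpoint yields $\mathcal I_{\!1}(-\delta\mu/\sigma,\delta)$ in the first case and $\mathcal I_{\!2}(-\delta\mu/\sigma,\delta)$ in the second, once we know the maximum sits there.

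It therefore remains to exclude an interior maximiser, and this is precisely where the hypothesis $P(\delta,c,2)<\delta$ enters. An interior critical point $\hat u^*$ satisfies $e^{\mu s^*}=1-\tfrac\mu\sigma e^{\mu r}$ with $s^*=(\delta+\hat u^*)/(D_1\delta)$, and I would substitute this relation into $\mathcal I_{\!2}(\hat u^*,\delta)$ to obtain a closed form and then show the resulting value is at least $\delta$; if so, an interior maximiser forces $P(\delta,c,2)\geq\delta$, contradicting the hypothesis, and the maximum must be at the right endpoint. I expect this last step to be the main obstacle. Using $D_1\geq|\mu|+|\sigma|$ together with the Case~1 bound $r\leq(1-\mu/\sigma)/D_1$ one can first check that no interior critical point occurs when $rD_1-1\leq-\mu/\sigma$ (this reduces to the elementary inequalities $e^x(1-x)\leq1$ for $\mu>0$ and $\tfrac{y(1-y)}{1+y}\leq\ln(1+y)$ with $y=|\mu|/|\sigma|$ for $\mu<0$), so the genuine work is the case $rD_1-1>-\mu/\sigma$, where the critical value $\mathcal I_{\!2}(\hat u^*,\delta)$ must be bounded below by $\delta$ after eliminating the transcendental quantity $s^*$ via the critical-point relation and handling the signs of $\mu$ separately.
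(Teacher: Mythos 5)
Your reduction to a one-variable concavity argument is sound and, in its first half, essentially reproduces the paper's Appendix~\ref{Appendix:Explicit}: your derivative formula is the paper's \eqref{Eq:I2derivative} (constant and equal to $e^{\mu r}+\tfrac{\sigma}{\mu}(e^{\mu r}-1)$ on the $\mathcal{I}_{\!1}$ branch, so the function is only \emph{weakly} concave there, not strictly — harmless, but worth stating correctly), and positivity of the derivative at $\hat u=-\delta$ is noted there too. Your Case~1 claim is correct: I checked that the two elementary inequalities you cite, $e^x(1-x)\leq 1$ for $x=\mu r>0$ and $\tfrac{y(1-y)}{1+y}\leq\ln(1+y)$ for $y=\mu/\sigma\in(0,1]$, do close the case $rD_1-1\leq-\mu/\sigma$, and this is tidier than the paper's route through Lemma~\ref{Lem:StabilityDDE:tauD-1<-mu/sigma} (a discriminant computation plus the inequality $\tfrac{-xe^x}{1-e^x}\geq\tfrac12(1+x)+\tfrac12\sqrt{1+6x+x^2}$) combined with parts (C)--(E) of Lemma~\ref{Lem:StabilityDDE:simplify}.

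The genuine gap is Case~2, which you explicitly leave as a plan; it is exactly part (B) of Lemma~\ref{Lem:StabilityDDE:simplify}, the only place where the hypothesis $P(\delta,c,2)<\delta$ enters, hence the crux of the theorem. Two things are missing. First, for $\mu<0$ your substitution strategy is the right one, but it only closes with an inequality you never identify: after inserting $e^{\mu s^*}-1=-\tfrac{\mu}{\sigma}e^{\mu r}$ and using that the coefficient $e^{\mu r}-\sigma/\mu$ of $\hat u_*$ is negative (so $\hat u_*<-\mu\delta/\sigma$ gives a lower bound), one arrives at
\begin{equation*}
\mathcal{I}_{\!2}(\hat u_*,\delta)\;\geq\;\delta-\Bigl(\tfrac{D_1}{\mu}+\tfrac{\mu}{\sigma}+\tfrac{\sigma}{\mu}\Bigr)\delta e^{\mu r},
\end{equation*}
and one must then invoke $D_1\geq|\mu|+|\sigma|\geq(\mu^2+\sigma^2)/|\sigma|$ (valid because $|\mu|\leq|\sigma|$) to get $\tfrac{D_1}{\mu}+\tfrac{\mu}{\sigma}+\tfrac{\sigma}{\mu}\leq0$ and hence $\mathcal{I}_{\!2}(\hat u_*,\delta)\geq\delta$. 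Second, and more seriously, for $\mu>0$ the plan of ``bounding the critical value below by $\delta$'' is the wrong move: there the coefficient $e^{\mu r}-\sigma/\mu$ is \emph{positive}, the sign flips that drive the $\mu<0$ estimate are unavailable, and in fact the configuration ``$\mu>0$ with a genuine interior critical point'' is empty, so the desired bound is only vacuously true and cannot be obtained by direct estimation. What is actually needed — and what the paper proves in the first half of part (B), via \eqref{Eq:epsD/mu} and the inequalities $(1-e^{-x})/x>1$ and $(1-1/x)\ln\bigl[(1-e^{-x})/x\bigr]-1\leq-x$ for $x=\mu/\sigma\in(-1,0)$ — is that $\mu>0$ together with $\hat u_*<-\mu\delta/\sigma$ forces $rD_1-1\leq-\mu/\sigma$, i.e.\ an interior maximiser simply cannot occur in your Case~2 when $\mu>0$, so the maximum sits at the right endpoint without any appeal to $P(\delta,c,2)<\delta$. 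Until this exclusion argument is supplied, your contradiction never gets started in the $\mu>0$ half of Case~2, and the theorem is not proved.
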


\begin{proof}
See Appendix~\ref{Appendix:Explicit}.
\end{proof}

We will not state an explicit expression for $P(\delta,c,3)$.
When needed, this can determined by evaluating \eqref{Eq:P} numerically for $k=3$.

We now prove four lemmas which will be needed for the proof of Theorem~\ref{Thm:StabilityDDE:Razumikhin_k=123}
where we show  asymptotic stability in the set
$\bigl\{(\mu,\sigma) : P(1,0,k)<1\bigr\}$.

\begin{lem}\label{Lem:dI/dtau1}
Let $a>0$, $c\ne 0$, $\sigma\leqslant\mu$ and $\sigma<-\mu$. Let $\delta\in \bigl(0,|a/c|\bigr)$ and $\hat u \in\bigl[-\delta,-\delta\mu/\sigma\bigr]$ be fixed. Then $\mathcal I(\hat u,\delta,c,1)$
decreases with decreasing $r_+$.
\end{lem}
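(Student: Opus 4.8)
The plan is to write $\mathcal{I}(\hat u,\delta,c,1)$ in closed form, regard it as a function of $r_+$ alone with $\hat u$ and $\delta$ held fixed, and show it is nondecreasing in $r_+$ by a one-line differentiation followed by a sign check. First I would substitute the explicit form of $\eta_{(1)}$ from \eqref{Eq:eta1} into the definition \eqref{Eq:Ick}. Since $\eta_{(1)}(\theta)=-\delta$ on $[-r_+,0)$ and the single point $\theta=0$ does not affect the integral, this yields, for $\mu\ne0$,
$$
\mathcal{I}(\hat u,\delta,c,1)=\hat u\,e^{\mu r_+}-\sigma\delta\int_{-r_+}^{0}e^{-\mu\theta}\,d\theta
=\hat u\,e^{\mu r_+}-\frac{\sigma\delta}{\mu}\bigl(e^{\mu r_+}-1\bigr),
$$
with the $\mu=0$ expression $\hat u-\sigma\delta r_+$ obtained as the $\mu\to0$ limit (or by direct integration). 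Because the only dependence on $c$ is through $r_+=a+c\delta$, it is legitimate to treat $\mathcal{I}$ as a function of the single scalar $r_+$.

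Next I would differentiate with respect to $r_+$. A direct computation gives
$$
\frac{d}{dr_+}\mathcal{I}(\hat u,\delta,c,1)=\mu\hat u\,e^{\mu r_+}-\sigma\delta\,e^{\mu r_+}=e^{\mu r_+}\bigl(\mu\hat u-\sigma\delta\bigr),
$$
and the same expression is recovered at $\mu=0$, where it reduces to $-\sigma\delta$. Since $e^{\mu r_+}>0$, proving the lemma reduces to showing $\mu\hat u-\sigma\delta\ge0$ for every admissible $\hat u$; the claimed monotonicity (that $\mathcal{I}$ decreases when $r_+$ decreases) then follows immediately. I would first record that the standing hypotheses $\sigma\le\mu$ and $\sigma<-\mu$ add to give $2\sigma<0$, so $\sigma<0$.

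The main step is then the sign bookkeeping, which I expect to be the only real obstacle. As $\mu\hat u$ is linear (hence monotone) in $\hat u$, it suffices to check the endpoint of $[-\delta,-\delta\mu/\sigma]$ at which it is smallest. If $\mu\ge0$, then $\mu\hat u$ is minimized at $\hat u=-\delta$, where $\mu\hat u-\sigma\delta=-\delta(\mu+\sigma)>0$ by $\sigma<-\mu$. If $\mu<0$, then $\mu\hat u$ is minimized at $\hat u=-\delta\mu/\sigma$, where
$$
\mu\hat u-\sigma\delta=-\frac{\delta\mu^2}{\sigma}-\sigma\delta=-\frac{\delta}{\sigma}\bigl(\mu^2+\sigma^2\bigr)>0,
$$
since $\sigma<0$ makes $-\delta/\sigma>0$. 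In both cases $\mu\hat u-\sigma\delta>0$ for every $\hat u$ in the interval, so $\frac{d}{dr_+}\mathcal{I}(\hat u,\delta,c,1)>0$ and $\mathcal{I}(\hat u,\delta,c,1)$ is strictly increasing in $r_+$, which is exactly the assertion that it decreases as $r_+$ decreases.
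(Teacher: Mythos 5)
Your proof is correct and follows essentially the same route as the paper: both reduce the claim to computing $\tfrac{\partial}{\partial r_+}\mathcal{I}(\hat u,\delta,c,1)=e^{\mu r_+}\bigl(\mu\hat u-\sigma\delta\bigr)$ and checking that this is positive, the paper doing so by differentiating under the integral sign while you first evaluate the integral in closed form. Your case-by-case endpoint analysis merely spells out the sign claim that the paper asserts in one line from $\sigma<-\mu$ and $\hat u\in[-\delta,-\delta\mu/\sigma]$, so there is no substantive difference.
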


\begin{proof}
For $r_+>0$,
$$\frac{\partial}{\partial r_+}\biggl(\hat ue^{\mu r_+}
+ \sigma \int_{-r_+}^{0}{e^{-\mu \theta}\eta_{(1)}(\theta)d\theta}\biggr)
=e^{\mu r_+}\bigl[\mu \hat u +\sigma \eta_{(1)}(-r_+)\bigr]
=e^{\mu r_+}\bigl[\mu\hat{u} - \sigma\delta \bigr] > 0,$$
since $\sigma<-\mu$ and $\hat u\in\bigl[-\delta,-\delta\mu/\sigma\bigr]$.
\end{proof}

\begin{lem}\label{Lem:dI/dtau23}
For $k=2$ or $3$, let $a>0$, $c\ne 0$, $\sigma\leq\mu$ and $\sigma<-\mu$. Let $\delta\in(0,|a/c|)$
and $\hat u \in[-\delta,-\mu\delta/\sigma]$ be fixed.
Let $\mathcal{I}(r_+)$ be the expression for $\mathcal{I}(\hat u,\delta,c,k)$ as a function of only $r_+$;
\begin{equation} \label{Eq:dI/dtau23}
\mathcal{I}(r_+)
=\hat ue^{\mu r_+}
+ \sigma\! \int_{-r_+}^{0} \! {e^{-\mu \theta} \eta_{(k)}(\theta)d\theta}, \qquad
\tfrac{\partial}{\partial r_+}\mathcal{I}(r_+)
=e^{\mu r_+}\bigl[\mu \hat u +\sigma \eta_{(k)}(-r_+)\bigr].
\end{equation}
\begin{enumerate}[(A)]
\item If $\mu \leq 0$, then $\frac{\partial}{\partial r_+}\mathcal{I}(r_+) > 0$.
\item If $\mu>0$ and $\eta_{(k)}(-r_+)\leq0$, then $\frac{\partial}{\partial r_+}\mathcal{I}(r_+) > 0$.
\item If $\frac{\partial}{\partial r_+}\mathcal{I}(r_+) \leq 0$, then $\mu>0$, $\eta_{(k)}(-r_+)>0$, and $\mathcal{I}(\hat u,\delta,c,k)<\delta$.
\end{enumerate}
\end{lem}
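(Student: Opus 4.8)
The plan is to reduce everything to the sign of the bracket $g := \mu\hat u + \sigma\eta_{(k)}(-r_+)$ appearing in \eqref{Eq:dI/dtau23}, since $e^{\mu r_+}>0$ forces $\tfrac{\partial}{\partial r_+}\mathcal{I}(r_+)$ and $g$ to have the same sign. First I would record the standing sign facts implied by $\sigma\le\mu$ and $\sigma<-\mu$: namely $\sigma<0$, $\mu+\sigma<0$, and $-\mu\delta/\sigma<\delta$ (the last precisely because $\sigma<-\mu$). I would also read off from the explicit forms \eqref{Eq:eta2} and \eqref{Eq:eta3} (rather than from the abstract infimum \eqref{Eq:etak}) that $\eta_{(k)}$ is nondecreasing on $[-r_+,0]$ with $\eta_{(k)}(0)=\hat u$ and $\eta_{(k)}(\theta)\in[-\delta,\hat u]$; in particular $-\delta\le\eta_{(k)}(-r_+)\le\hat u$, where the right-hand inequality is strict unless $\hat u=-\delta$ (in which case $\eta_{(k)}\equiv-\delta$). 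With these facts, parts (A) and (B) become the claim $g>0$, and part (C) is their contrapositive plus a quantitative bound.

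For (A) and (B) I would first dispose of $\hat u=-\delta$: then $\eta_{(k)}(-r_+)=-\delta$ and $g=-(\mu+\sigma)\delta>0$. For $\hat u>-\delta$ the strict monotonicity $\eta_{(k)}(-r_+)<\hat u$ together with $\sigma<0$ gives $\sigma\eta_{(k)}(-r_+)>\sigma\hat u$, hence $g>(\mu+\sigma)\hat u$. In case (A), $\mu\le0$ forces $\hat u\le-\mu\delta/\sigma\le0$, so $(\mu+\sigma)\hat u\ge0$ and $g>0$; the identical argument handles the subcase of (B) with $\hat u\le0$. The only remaining subcase is (B) with $\mu>0$ and $\hat u>0$, which I would treat directly: $\mu\hat u>0$, while $\eta_{(k)}(-r_+)\le0$ and $\sigma<0$ give $\sigma\eta_{(k)}(-r_+)\ge0$, so again $g>0$.

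For (C), assume $\tfrac{\partial}{\partial r_+}\mathcal{I}(r_+)\le0$, i.e. $g\le0$. The contrapositives of (A) and (B) immediately yield $\mu>0$ and $\eta_{(k)}(-r_+)>0$, leaving the substantive claim $\mathcal{I}(\hat u,\delta,c,k)<\delta$. Here I would bound the integral below using monotonicity, $\eta_{(k)}(\theta)\ge\eta_{(k)}(-r_+)$, so that $\int_{-r_+}^0 e^{-\mu\theta}\eta_{(k)}(\theta)\,d\theta\ge\eta_{(k)}(-r_+)\tfrac1\mu(e^{\mu r_+}-1)$; multiplying by $\sigma<0$ reverses this and gives
\[
\mathcal{I}(r_+)\le \hat u\,e^{\mu r_+}+\frac{\sigma\eta_{(k)}(-r_+)}{\mu}\bigl(e^{\mu r_+}-1\bigr).
\]
Feeding in $g\le0$ in the form $\sigma\eta_{(k)}(-r_+)\le-\mu\hat u$, hence $\tfrac{\sigma\eta_{(k)}(-r_+)}{\mu}\le-\hat u$ (valid as $\mu>0$), and using $e^{\mu r_+}-1>0$, the bound collapses to $\mathcal{I}(r_+)\le\hat u$; since $\hat u\le-\mu\delta/\sigma<\delta$, this closes the argument.

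The sign bookkeeping in (A)--(B) is routine; the main obstacle is the estimate in (C). What makes it work is, first, extracting the correct (strict) monotonicity of $\eta_{(k)}$ from the piecewise definitions, and second, the observation that the crude replacement of $\eta_{(k)}$ by its left-endpoint value in the integral is exactly sharp enough that, after substituting $g\le0$, the exponential factors cancel and leave the clean estimate $\mathcal{I}\le\hat u$. I expect the monotonicity verification in the $k=3$ case, where $\eta_{(3)}$ is built from the shifted profile $\bar\eta_{(3)}$ in \eqref{Eq:eta3}, to demand the most care.
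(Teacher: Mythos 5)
Your proof is correct, and for the substantive part (C) it takes a genuinely different route from the paper's. Both arguments get $\mu>0$ and $\eta_{(k)}(-r_+)>0$ by the contrapositive of (A)--(B) (the paper dismisses (A)--(B) as ``easy to show''; your sign bookkeeping, including the strict-monotonicity care at the edge case $\hat u=0$, is exactly what that gloss hides). But for the bound $\mathcal{I}<\delta$ the paper works case by case: for $k=2$ it observes that $\eta_{(2)}(-r_+)>0$ places all of $[-r_+,0]$ in the linear piece of $\eta_{(2)}$, inserts the closed-form expression \eqref{Eq:dI/dtau1}, and manipulates it; for $k=3$ it shows $\eta_{(3)}(-r_+)>0$ forces $\theta+\theta_{\text{shift}}>D_1/D_2$ on $[-r_+,0]$, so $\eta_{(3)}\equiv\eta_{(2)}$ there and $k=3$ reduces to $k=2$. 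You bypass both the explicit formulas and the reduction: monotonicity gives $\int_{-r_+}^0 e^{-\mu\theta}\eta_{(k)}(\theta)\,d\theta\geq\eta_{(k)}(-r_+)\tfrac{1}{\mu}(e^{\mu r_+}-1)$, and then $\sigma<0$, $\mu>0$ and the hypothesis $\mu\hat u+\sigma\eta_{(k)}(-r_+)\leq0$ collapse everything to $\mathcal{I}(r_+)\leq\hat u\leq-\mu\delta/\sigma<\delta$. This is shorter, needs no case analysis, and is uniform in $k$: it works verbatim for any $k$ whose $\eta_{(k)}$ is nondecreasing with $\eta_{(k)}(0)=\hat u$.

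Your route also buys something more than elegance: the paper's printed $k=2$ chain contains a sign error. The asserted implication that $\hat u-D_1\delta r_+>0$ yields $-\tfrac{\sigma}{\mu}D_1\delta r_+<\tfrac{\sigma}{\mu}\hat u$ is false in this regime, since with $\sigma<0<\mu$ and $\hat u>0$ the left-hand side is positive while the right-hand side is negative; consequently the first inequality of the paper's display, and its intermediate conclusion $\mathcal{I}<0$, do not hold. Indeed $\mathcal{I}$ can be positive under the hypotheses of (C): for example $\mu=0.5$, $\sigma=-1$, $\delta=1$, $a=0.1$, $|c|$ small, $\hat u=0.5$ gives $\eta_{(2)}(-r_+)\approx0.33>0$, $\mu\hat u+\sigma\eta_{(2)}(-r_+)\approx-0.08\leq0$, yet $\mathcal{I}\approx0.48>0$ --- still below $\hat u<\delta$, exactly as your estimate predicts. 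So your argument is not merely an alternative; it is a correct proof where the paper's needs repair. The only point to spell out fully in a write-up is the strict inequality $\eta_{(k)}(-r_+)<\hat u$ when $\hat u>-\delta$, which, as you anticipated, follows from \eqref{Eq:eta2} and \eqref{Eq:etabarthetashift_k=3} because $r_+>0$.
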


\begin{proof}
Parts (A) and (B) are easy to show. Let $\frac{\partial}{\partial r_+}\mathcal{I}(r_+) \leq 0$. From the
first two cases, this is only possible if $\mu>0$ and $\eta_{(k)}(-r_+)>0$.
Consider $k=2$ first.
Since $\eta_{(2)}(-r_+)>0\ne -\delta$ we are in the case $\frac{\delta+\hat u}{D_1\delta}>r_+$.
Thus $\mathcal{I}(\hat u, \delta,c,2)$ is given by \eqref{Eq:dI/dtau1}.
In this case, $\eta_{(2)}(-r_+)= \hat u - D_1\delta r_+>0$. From $\frac{\partial}{\partial r_+}\mathcal{I}(r_+) \leq 0$,
we get $\hat u\leq  -\frac{\sigma}{\mu}\eta_{(2)}(-r_+) = -\frac{\sigma}{\mu}(\hat u - D_1\delta r_+)$ which we can rearrange to get 
$-(\mu+\sigma)\hat u\geq-\sigma D_1\delta r_+$. Using this in \eqref{Eq:dI/dtau1} yields,
\begin{align*}
\mathcal{I}(\hat u, \delta,c,2)
&\leqslant \hat u\Bigl[ e^{\mu r_+}  + \frac{\sigma}{\mu}(e^{\mu r_+}- 1)\Bigr]
+ \delta\frac{\sigma D_1}{\mu^2}(e^{\mu r_+}  - 1) -  \frac{(\mu+\sigma)}{\mu}\hat u e^{\mu r_+},\\
&=-\hat u\frac{\sigma}{\mu}  + \delta \frac{\sigma D_1}{\mu^2 }(e^{\mu r_+} - 1), \\
& \leq \delta  + \delta \frac{\sigma D_1}{\mu^2 }(e^{\mu r_+} - 1),
\quad \text{since $\hat u \leq -\frac{\mu}{\sigma}\delta$}, \\
& <\delta.
\end{align*}

For $k=3$,
from $D_1^2/D_2\leqslant 1$ it follows that
$$\bar \eta_{(3)}(D_1/D_2) = -\delta+\frac{\delta D_1^2}{2D_2} \leqslant-\delta+\frac{\delta}{2}=-\frac{\delta}{2}<0.$$
Since $\eta_{(3)}$ is an increasing function and we require $\eta_{(3)}(-r_+)>0$,
then $-r_+ +\theta_\text{shift}>\frac{D_1}{D_2}$. Thus $\theta+\theta_\text{shift} >D_1/D_2$ for all $\theta\in[-r_+,0]$.
By the definition of $\eta_{(3)}$, in this case $\eta_{(3)}(\theta)=\eta_{(2)}(\theta)$ for $\theta\in[-r_+,0]$
and $\mathcal{I}(\hat u, \delta,c,3)=\mathcal{I}(\hat u, \delta,c,2)<\delta$.
\end{proof}

\begin{lem}\label{Lem:StabilityDDE:negativec123}
For $k=1$, $2$ or $3$, let $a>0$, $c\ne 0$, $\sigma\leqslant\mu$ and $\sigma<-\mu$.
Let $\delta\in \bigl(0,|a/c|\bigr)$. Then
\[\left\{(\mu,\sigma)\, : \, P(\delta,c,k)=\sup\limits_{\hat u \in[-\delta,-\delta\mu/\sigma]}\mathcal{I}(\hat u,\delta,|c|,k)<\delta\right\}
\subseteq \left\{(\mu,\sigma)\; : \;\sup\limits_{\hat u \in[-\delta,-\delta\mu/\sigma]}\mathcal{I}(\hat u,\delta,-|c|,k)<\delta\right\}. \]
\end{lem}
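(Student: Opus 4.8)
The plan is to recognise the two suprema as the \emph{same} function $\mathcal I(r_+)$ from \eqref{Eq:dI/dtau23} evaluated over two nested integration windows, and then to exploit the sign dichotomy for $\tfrac{\partial}{\partial r_+}\mathcal I$ supplied by Lemmas~\ref{Lem:dI/dtau1} and~\ref{Lem:dI/dtau23}. Following the remark after Lemma~\ref{Lem:v<P}, for a fixed $\hat u$ the quantity $\mathcal I(\hat u,\delta,|c|,k)$ is $\mathcal I(r_+)$ evaluated at $r_+=r:=a+|c|\delta$, while $\mathcal I(\hat u,\delta,-|c|,k)$ is the same $\mathcal I(r_+)$ evaluated at $r_+=\rho:=a-|c|\delta$, both using the identical integrand $\eta_{(k)}$ (which depends on $c$ only through $|c|$ via $D_1,D_2$); note $0<\rho<r$ because $\delta<|a/c|$. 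Since $[-\delta,-\delta\mu/\sigma]$ is compact and $\mathcal I(\hat u,\delta,-|c|,k)$ is continuous in $\hat u$, it suffices to establish the pointwise bound $\mathcal I(\rho)<\delta$ for every fixed $\hat u\in[-\delta,-\delta\mu/\sigma]$; the supremum defining the right-hand set is then attained and strictly below $\delta$.

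Fix such an $\hat u$. From \eqref{Eq:dI/dtau23}, $\tfrac{\partial}{\partial r_+}\mathcal I(r_+)=e^{\mu r_+}\bigl[\mu\hat u+\sigma\eta_{(k)}(-r_+)\bigr]$, and since each $\eta_{(k)}$ is non-decreasing in its argument (see \eqref{Eq:eta1}, \eqref{Eq:eta2}, \eqref{Eq:eta3}) while $\sigma<0$, the bracket $\mu\hat u+\sigma\eta_{(k)}(-r_+)$ is non-decreasing in $r_+$. Hence $\tfrac{\partial}{\partial r_+}\mathcal I$ changes sign at most once, from non-positive to non-negative, so $\mathcal I(r_+)$ is first non-increasing and then non-decreasing in $r_+$. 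I would then split on the sign of the derivative at the smaller endpoint $\rho$. If $\tfrac{\partial}{\partial r_+}\mathcal I(\rho)>0$, monotonicity of the bracket forces $\tfrac{\partial}{\partial r_+}\mathcal I>0$ on all of $[\rho,r]$, whence $\mathcal I(\rho)<\mathcal I(r)=\mathcal I(\hat u,\delta,|c|,k)\leqslant P(\delta,c,k)<\delta$, using the membership hypothesis and the definition \eqref{Eq:P} of $P$. If instead $\tfrac{\partial}{\partial r_+}\mathcal I(\rho)\leqslant0$, we are exactly in case (C) of Lemma~\ref{Lem:dI/dtau23} at $r_+=\rho$, which yields $\mathcal I(\rho)=\mathcal I(\hat u,\delta,-|c|,k)<\delta$ directly. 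For $k=1$ this second case never arises, since Lemma~\ref{Lem:dI/dtau1} gives a strictly positive derivative, so only the first case is needed. In all cases $\mathcal I(\rho)<\delta$, which completes the pointwise bound and hence, by the compactness argument above, the lemma.

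The main obstacle is that $\mathcal I(r_+)$ is \emph{not} monotone in the window length $r_+$: it can decrease and then increase, so one cannot simply assert that a shorter integration window gives a smaller value. The resolution is precisely the dichotomy of Lemma~\ref{Lem:dI/dtau23}: on the decreasing branch (bracket $\leqslant0$) the value of $\mathcal I$ is automatically below $\delta$, while on the increasing branch the value at $\rho$ is dominated by the value at $r$, which the hypothesis $P(\delta,c,k)<\delta$ controls. The only genuinely technical estimate—that $\mathcal I<\delta$ wherever $\mathcal I$ is non-increasing—has already been discharged in Lemma~\ref{Lem:dI/dtau23}(C), so here I need only invoke it together with the elementary monotonicity of the bracket; the three values $k=1,2,3$ are handled uniformly once one notes that every $\eta_{(k)}$ is non-decreasing, requiring no case-by-case re-derivation of $\mathcal I$.
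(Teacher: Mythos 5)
Your proof is correct and takes essentially the same route as the paper's: for each fixed $\hat u$ you split on the sign of the bracket $\mu\hat u+\sigma\eta_{(k)}(-r_+)$ at the smaller window $\rho=a-|c|\delta$, invoking Lemma~\ref{Lem:dI/dtau23}(C) on the non-increasing branch and the hypothesis $P(\delta,c,k)<\delta$ together with monotonicity of the bracket on the increasing branch, with Lemma~\ref{Lem:dI/dtau1} disposing of $k=1$. The only cosmetic difference is your compactness/continuity step for passing from the pointwise bound to the supremum, which the paper gets for free because both cases yield bounds uniform in $\hat u$ (namely $0$ and $P(\delta,c,k)$).
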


\begin{proof}
Recall from Section~\ref{Sec:ModelEquation}
that changing the sign of $c$ in $\mathcal{I}(\hat u,\delta,c,k) $ only changes the value of $r_+=a+c\delta$.
For $k=1$ the result follows from Lemma~\ref{Lem:dI/dtau1}.

For $k=2$ or $3$, let $(\mu,\sigma)\in \bigl\{P(\delta,c,k)<\delta\bigr\}$ and
$\hat u\in[-\delta,-\delta\mu/\sigma]$.
Recall that $\sigma<0$, while
$\eta_{(k)}(\theta)$ is a nondecreasing function in $\theta$.
There are two cases to consider:
\begin{enumerate}[(i)]
\item If $\mu\hat u+\sigma\eta_{(k)}(-(a-|c|\delta))\leq 0$ then
                  by Lemma~\ref{Lem:dI/dtau23}(C), $\mathcal{I}(\hat u,\delta,-|c|,k)<\delta$.
\item If $\mu\hat u+\sigma\eta_{(k)}(-(a+|c|\delta))\geq \mu\hat u+\sigma\eta_{(k)}(-(a-|c|\delta))>0$, then $\mu\hat u+\sigma\eta_{(k)}(-\tau))>0$ for all
        $\tau\in(a-|c|\delta,a+|c|\delta)$.
        By equation \eqref{Eq:dI/dtau23},
        the expression for $\mathcal{I}(r_+)$ is
        increasing over this interval and
        thus, $\mathcal{I}(\hat u,\delta,-|c|,k)\leq\mathcal{I}(\hat u,\delta,|c|,k) \leq P(\delta,c,k) <\delta$.
\end{enumerate}
Thus $\mathcal{I}(\hat u,\delta,-|c|,k)<\delta$ and the result follows.
\end{proof}

\begin{lem}\label{Lem:StabilityDDE:P123}
For $k=1$, $2$ or $3$, let $a>0$, $c\ne 0$, $\sigma\leqslant\mu$ and $\sigma<-\mu$. If $0<\delta_{*}\leqslant \delta_{**}<|a/c|$ then
$$\bigl\{(\mu,\sigma)\, : \, P(\delta_{**},c,k)<\delta_{**}\bigr\}
\subseteq \bigl\{(\mu,\sigma)\, : \, P(\delta_*,c,k)<\delta_{*}\bigr\}.$$
\end{lem}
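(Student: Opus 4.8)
The plan is to reduce the set inclusion, for a fixed admissible $(\mu,\sigma)$, to a one–dimensional monotonicity statement about the normalized quantity $H(\delta):=P(\delta,c,k)/\delta$. Writing $\hat u=\hat w\delta$, the substitution turns the supremum in \eqref{Eq:P} over $\hat u\in[-\delta,-\delta\mu/\sigma]$ into a supremum over the \emph{fixed} compact interval $\hat w\in[-1,-\mu/\sigma]$ (nonempty since $\sigma\le\mu$, recalling $\sigma<0$). Setting $r=a+|c|\delta$ and $\psi_{\hat w,\delta}:=\eta_{(k)}/\delta$ (the normalized minimal profile, with $\psi(0)=\hat w$, $|\psi|\le1$ and $|\psi^{(j)}|\le D_j(\delta)$), one obtains $h_{\hat w}(\delta):=\mathcal I(\hat w\delta,\delta,|c|,k)/\delta=\hat w e^{\mu r}+\sigma\int_{-r}^{0}e^{-\mu\theta}\psi_{\hat w,\delta}(\theta)\,d\theta$ and $H(\delta)=\sup_{\hat w}h_{\hat w}(\delta)$. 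The desired inclusion is precisely the implication $H(\delta_{**})<1\Rightarrow H(\delta_*)<1$ for $\delta_*\le\delta_{**}$, which I will establish by a barrier argument showing that $H$ cannot cross the level $1$ downward as $\delta$ increases.

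The crux is the pointwise claim: for each fixed $\hat w$, $h_{\hat w}(\delta)\ge1\Rightarrow \tfrac{d}{d\delta}h_{\hat w}(\delta)>0$. Differentiating and using $\tfrac{dr}{d\delta}=|c|$ together with \eqref{Eq:dI/dtau23}, I get the decomposition $\tfrac{d}{d\delta}h_{\hat w}(\delta)=\tfrac{|c|}{\delta}\,\partial_{r_+}\mathcal I+\sigma\int_{-r}^{0}e^{-\mu\theta}\,\partial_\delta\psi_{\hat w,\delta}(\theta)\,d\theta$. For the second term, since $D_1(\delta)$ and $D_2(\delta)$ in \eqref{Eq:D1}--\eqref{Eq:D2} and $r=a+|c|\delta$ are all nondecreasing in $\delta$, the feasible set defining the pointwise infimum $\eta_{(k)}$ only grows with $\delta$, so $\psi_{\hat w,\delta}$ is nonincreasing in $\delta$, i.e. $\partial_\delta\psi_{\hat w,\delta}\le0$; as $\sigma<0$ this makes the integral term $\ge0$ (for $k=1$ it vanishes, $\psi_{(1)}\equiv-1$ on $\theta<0$). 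For the first term, $h_{\hat w}(\delta)\ge1$ means $\mathcal I(\hat w\delta,\delta,|c|,k)\ge\delta$; by Lemma~\ref{Lem:dI/dtau1} (for $k=1$) and the contrapositive of Lemma~\ref{Lem:dI/dtau23}(C) (for $k=2,3$) this forces $\partial_{r_+}\mathcal I>0$, so the first term is strictly positive. Hence $\tfrac{d}{d\delta}h_{\hat w}>0$ whenever $h_{\hat w}\ge1$.

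With the claim in hand I finish by contradiction. Suppose $H(\delta_{**})<1$ but $H(\delta_*)\ge1$, and set $\delta_1=\sup\{\delta\in[\delta_*,\delta_{**}]:H(\delta)\ge1\}$. By continuity of $H$ (a supremum over a fixed compact set of functions continuous in $\delta$) we have $\delta_1<\delta_{**}$, $H(\delta_1)=1$, and $H<1$ on $(\delta_1,\delta_{**}]$. Let $\hat w^*$ attain the supremum at $\delta_1$ (compactness of the $\hat w$-interval), so $h_{\hat w^*}(\delta_1)=1$ and, by the claim, $\tfrac{d}{d\delta}h_{\hat w^*}(\delta_1)>0$; then $h_{\hat w^*}(\delta)>1$ for $\delta$ slightly larger than $\delta_1$, whence $H(\delta)\ge h_{\hat w^*}(\delta)>1$ there, contradicting $H<1$ on $(\delta_1,\delta_{**}]$. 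This proves $H(\delta_*)<1$, i.e. the claimed inclusion.

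The main obstacle is the sign of $\partial_{r_+}\mathcal I$ in the first term: when $\mu>0$ and $\eta_{(k)}(-r_+)>0$ it can be negative, so $h_{\hat w}$ need not be globally increasing in $\delta$. This is exactly the delicate case already isolated in Lemma~\ref{Lem:dI/dtau23}(C), whose conclusion $\mathcal I<\delta$ shows that such points have $h_{\hat w}<1$ and therefore never obstruct the barrier argument. The only remaining care is the minor technical point of differentiating the piecewise-$C^1$ profiles $\eta_{(k)}$ and handling kinks via one-sided derivatives, which satisfy the same inequality on each smooth piece.
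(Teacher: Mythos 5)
Your proof is correct, and it is built from the same core ingredients as the paper's own proof --- the normalization $\hat u=s\delta$ onto the fixed interval $[-1,-\mu/\sigma]$, the splitting of $\tfrac{d}{d\delta}\bigl(\mathcal{I}(s\delta,\delta,|c|,k)/\delta\bigr)$ into the boundary term $\tfrac{|c|}{\delta}\partial_{r_+}\mathcal{I}$ plus nonnegative terms coming from the growth of $r$ and the $D_j$ with $\delta$, and the sign information of Lemmas~\ref{Lem:dI/dtau1} and~\ref{Lem:dI/dtau23}(C) --- but the top-level logic is genuinely different. The paper fixes $s$ and runs a dichotomy at $\delta_*$: if $\partial_{r_+}\mathcal{I}\leq 0$ there, then Lemma~\ref{Lem:dI/dtau23}(C) gives $\mathcal{I}(s\delta_*,\delta_*,|c|,k)<\delta_*$ outright; otherwise the positivity of $\partial_{r_+}\mathcal{I}$ is propagated forward over all of $[\delta_*,\delta_{**}]$ using the auxiliary monotonicity fact that $\eta_{(k)}(\delta,-(a+|c|\delta))/\delta$ decreases with $\delta$ (a fact the paper asserts but does not prove in detail), whence $\mathcal{I}/\delta$ is nondecreasing on the whole interval and is compared with $P(\delta_{**},c,k)/\delta_{**}<1$; the case $k=1$ is handled separately by the explicit computation \eqref{Eq:dI/ddelta1}. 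You instead run a last-crossing (barrier) contradiction on $H(\delta)=P(\delta,c,k)/\delta$, which needs only the pointwise implication $\mathcal{I}\geq\delta\Rightarrow\partial_{r_+}\mathcal{I}>0$ (the contrapositive of Lemma~\ref{Lem:dI/dtau23}(C), or Lemma~\ref{Lem:dI/dtau1} when $k=1$) applied at the crossing point $\delta_1$. What this buys: you never need the unproved monotonicity of $\eta_{(k)}(\delta,-r)/\delta$ in $\delta$, and $k=1,2,3$ are treated uniformly rather than $k=1$ being a separate calculation. What it costs: your argument leans on continuity of $H$, attainment of the supremum at $\delta_1$, and validity of the derivative claim for one-sided derivatives of the piecewise-smooth maps $\delta\mapsto h_{\hat w}(\delta)$ --- all true here because the profiles \eqref{Eq:eta1}, \eqref{Eq:eta2}, \eqref{Eq:eta3} and hence $\mathcal{I}$ are explicit, jointly continuous and piecewise $C^1$, but these are exactly the points that must remain explicit, as you acknowledge. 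Both routes are sound; the paper's version yields the small extra of genuine monotonicity of $\mathcal{I}/\delta$ along $[\delta_*,\delta_{**}]$ in its second case, while yours is the more robust template when no such auxiliary monotonicity is available.
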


\begin{proof}
Increasing $\delta$ increases $r=a+|c|\delta$ which is the only source of nonlinearity in $\delta$ in the first expression
\eqref{eq:Pdeltac1} for $P(\delta,c,1)$. Thus for $\mu\ne0$
\begin{equation}  \label{Eq:dI/ddelta1}
\frac{\partial}{\partial\delta}\biggl(\frac{P(\delta,c,1)}{\delta}\biggr)
=-e^{\mu r}\biggl(\frac{\mu}{\sigma}+\frac{\sigma}{\mu}\biggr)\frac{\partial}{\partial\delta}(\mu r)
=\mu|c|\biggl(\frac{\mu^2+\sigma^2}{-\sigma\mu}\biggr)e^{\mu r}>0.
\end{equation}
Positivity also follows trivially from \eqref{eq:Pdeltac1} when $\mu=0$. The result follows for $k=1$.

For $k=2$ or $3$,
consider $\mathcal{I}(s\delta,\delta,|c|,k)/\delta$ and note that
$r$, $D_1$ and $D_2$ are the only terms in the expression that depend on $\delta$,
and that increasing $\delta$ increases the value of each of those terms. Thus
$$\frac{\partial}{\partial\delta}\left(\frac{\mathcal{I}(s\delta,\delta,|c|,k)}{\delta}\right)
= \frac{\partial}{\partial r}\left(\frac{\mathcal{I}(s\delta,\delta,|c|,k)}{\delta}\right)|c|
+ \sum_{j=1}^{k-1}\frac{\partial}{\partial D_j}\left(\frac{\mathcal{I}(s\delta,\delta,|c|,k)}{\delta}\right)
\frac{\partial D_j}{\partial \delta}.$$ 
We focus on the first term on the right-hand side, since all the remaining terms are positive.
From \eqref{Eq:Ick} we can write
$$\frac{\partial}{\partial r}\left(\frac{\mathcal{I}(s\delta,\delta,|c|,k)}{\delta}\right)
=e^{\mu r}\Bigl[\mu s + \tfrac{\sigma}{\delta}\eta_{(k)}(-r)\Big]
=e^{\mu r}\Bigl[\mu s + \tfrac{\sigma}{\delta}\eta_{(k)}(-(a+|c|\delta)\Big].$$
Let $r^{*}=a+|c|\delta_*$, $r^{**}=a+|c|\delta_{**}$ and
$(\mu,\sigma)\in \bigl\{P(\delta_{**},c,k)<\delta_{**}\bigr\}$.
Let $s\in[-1,-\mu/\sigma]$ and use the notation $\eta_{(k)}(\delta,\theta)$ to denote the function $\eta_{(k)}$
as a function of both $\theta$ and $\delta$. Note that
$\eta_{(k)}(\delta,-(a+|c|\delta)) / \delta$ is a nonincreasing function of $\delta$.
Consider the following cases:
\begin{enumerate}[(i)]
\item
If $\mu s +\sigma\eta_{(k)}(\delta_*,-r^*) / \delta_* \leq0$ then
by Lemma~\ref{Lem:dI/dtau23}(C), $\mathcal{I}(s\delta_*,\delta_*,|c|,k)<\delta_*$.
\item
If $\mu s +\sigma\eta_{(k)}(\delta_{**}, -r^{**}) /\delta_{**}\geq  \mu s +\sigma\eta_{(k)}(\delta_*,-r^*) / \delta_* >0$
then $\frac{\partial}{\partial r}\Bigl(\frac{\mathcal{I}(s\delta,\delta,|c|,k)}{\delta}\Bigr)\geq0$
for $\delta\in[\delta_*,\delta_{**}]$. Thus,
$\frac{\partial}{\partial \delta}\Bigl(\frac{\mathcal{I}(s\delta,\delta,|c|,k)}{\delta}\Bigr)\geq0$
for $\delta\in[\delta_*,\delta_{**}]$ and,
\[\frac{\mathcal{I}(s\delta_*,\delta_{*},|c|,k)}{\delta_{*}}\leq\frac{\mathcal{I}(s\delta_{**},\delta_{**},|c|,k)}{\delta_{**}} \leq \frac{P(\delta_{**},c,k)}{\delta_{**}}<1.\]
\end{enumerate}
Cases (i) and (ii) both yield $\mathcal{I}(s\delta_*,\delta_*,|c|,k)<\delta_*$.
Since this holds for all $s\in[-1,-\mu/\sigma]$, $P(\delta_*,|c|,k)<\delta_*$ follows.
\end{proof}


With these lemmas we can prove our main result.

\begin{thm}[Asymptotic stability for \eqref{Eq:1Delay} using $\mathcal{E}_{(k)}(\delta,x)$]
\label{Thm:StabilityDDE:Razumikhin_k=123}
For $k=1$, $2$ or $3$,
let $a>0$, $c\ne 0$, and $(\mu,\sigma) \in \{P(1,0,k) <1\}$ where $P(\delta,c,k)$ is defined by \eqref{Eq:P}.
Then $(\mu,\sigma) \in \{P(\delta_1,c,k) <\delta_1\}$ for some $\delta_1\in\bigl(0,|a/c|\bigr)$. Furthermore, for $\delta\in(0,\delta_1]$ let $\delta_2=\delta e^{-k(|\mu|+|\sigma|)(a+|c|\delta)}$
and $|\varphi(t)|< \delta_2 $ for all $t\in[-a-|c|\delta,0]$, then the solution to \eqref{Eq:1Delay}
satisfies $|u(t)|\leqslant \delta$ for all $t\geqslant 0$ and $\lim_{t\to\infty}u(t)=0$.
\end{thm}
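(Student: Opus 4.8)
The plan is to verify the hypotheses of Theorem~\ref{Thm:Technique} for the model DDE \eqref{Eq:1Delay} on an interval of radii $\delta$ and then read off both conclusions, using the identifications $L=|\mu|+|\sigma|$ and $r(\delta)=a+|c|\delta$, so that the basin radius $\delta e^{-Lkr(\delta)}$ supplied by the stability theorems is exactly the stated $\delta_2=\delta e^{-k(|\mu|+|\sigma|)(a+|c|\delta)}$ on the history interval $[-a-|c|\delta,0]$.

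\emph{Passing from $(c,\delta)=(0,1)$ to $(c,\delta_1)$.} Since the $c=0$ problem is linear with the fixed delay $a$, the set $\mathcal{E}_{(k)}(\delta,x)$, the extremal profiles $\eta_{(k)}$, and hence $P(\delta,0,k)$ all scale linearly in $\delta$, so $P(\delta,0,k)=\delta\,P(1,0,k)$. For fixed $c\ne0$ the $\delta$-dependence of $P(\delta,c,k)/\delta$ enters only through $r(\delta)=a+|c|\delta$ and the derivative bounds $D_1,D_2$, and these tend to their $c=0$ values as $\delta\to0^+$ (namely $r(\delta)\to a$, $D_1\to|\mu|+|\sigma|$, $D_2\to(|\mu|+|\sigma|)^2$). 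I would therefore show that the scaled profiles $\eta_{(k)}(\theta)/\delta$ and the integral in \eqref{Eq:Ick} converge as $\delta\to0^+$, uniformly in the scaled variable $\hat u/\delta\in[-1,-\mu/\sigma]$, to their $c=0$ counterparts, whence $\lim_{\delta\to0^+}P(\delta,c,k)/\delta=P(1,0,k)$. (For $k=1$ this is transparent from \eqref{eq:Pdeltac1}, where $P(\delta,c,1)/\delta$ depends on $\delta$ only through $r$.) As $P(1,0,k)<1$, this limit produces some $\delta_1\in(0,|a/c|)$ with $P(\delta_1,c,k)<\delta_1$, establishing the first claim.

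\emph{Propagating $P<\delta$ and verifying \eqref{odecond}.} Lemma~\ref{Lem:StabilityDDE:P123} then gives $P(\delta,c,k)<\delta$ for every $\delta\in(0,\delta_1]$, and by continuity of $\delta\mapsto P(\delta,c,k)-\delta$ the strict inequality persists on $(0,\delta^\sharp]$ for some $\delta^\sharp\in(\delta_1,|a/c|)$. For each such $\delta$ I convert $P(\delta,c,k)<\delta$ into the auxiliary-ODE condition: as the scalar problem has $x=\pm\delta$, Lemma~\ref{Lem:v<P} bounds $v(0)$ above on $\mathcal{E}_{(k)}(\delta,\delta)$ by $\sup_{\hat u}\mathcal I(\hat u,\delta,c,k)$ and below on $\mathcal{E}_{(k)}(\delta,-\delta)$ by $-\sup_{\hat u}\mathcal I(\hat u,\delta,-c,k)$. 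Since $\{c,-c\}=\{|c|,-|c|\}$, one of these suprema equals $P(\delta,c,k)<\delta$ and the other is $\sup_{\hat u}\mathcal I(\hat u,\delta,-|c|,k)$, which is also $<\delta$ by Lemma~\ref{Lem:StabilityDDE:negativec123}. Hence $\mathcal{G}(\delta,\delta)$ and $\mathcal{G}(\delta,-\delta)$ are both $<\delta$, so $\mathcal{F}(\delta)<\delta$, and by \eqref{stronger} the condition \eqref{odecond} holds for all $\eta\in E_{(k)}(\delta,x)$ with $|x|=\delta$.

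\emph{Invoking Theorem~\ref{Thm:Technique}.} By the previous paragraph, \eqref{odecond} holds at every scale $\delta'\in(0,\delta^\sharp)$, which is precisely the hypothesis of Theorem~\ref{Thm:Technique} with threshold $\delta^\sharp$. That theorem yields asymptotic stability of $u=0$, and via the Lyapunov-stability conclusion of Theorem~\ref{Thm:Lyapunov_NA} it gives, for every $\delta\in(0,\delta_1]\subset(0,\delta^\sharp)$, the bound $|u(t)|\le\delta$ for all $t\ge0$ together with $u(t)\to0$ whenever $|\varphi(s)|<\delta e^{-Lkr(\delta)}$ on $[-r(\delta),0]$; substituting $L=|\mu|+|\sigma|$ and $r(\delta)=a+|c|\delta$ identifies this with the stated $\delta_2$ and history interval. (Enlarging $\delta_1$ to $\delta^\sharp$ is what lets the closed endpoint $\delta=\delta_1$ fall in the open interval required by the theorems.) I expect the genuine obstacle to be the first step: making $\lim_{\delta\to0^+}P(\delta,c,k)/\delta=P(1,0,k)$ rigorous requires controlling the extremal functions $\eta_{(k)}$ and their integrals uniformly as $\delta\to0$, since for $k=2,3$ these profiles change shape with $\delta$ through $D_1,D_2$, rather than merely appealing to continuity; the remaining steps are a bookkeeping assembly of Lemmas~\ref{Lem:v<P}, \ref{Lem:StabilityDDE:negativec123} and~\ref{Lem:StabilityDDE:P123} with Theorem~\ref{Thm:Technique}.
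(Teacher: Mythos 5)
Your proposal is correct and follows essentially the same route as the paper's own proof: the scaling identity $P(\delta,0,k)=\delta P(1,0,k)$ together with the limit $P(\delta,c,k)/\delta\to P(1,0,k)$ as $\delta\to0^+$ to produce $\delta_1$, Lemma~\ref{Lem:StabilityDDE:P123} to propagate $P(\delta,c,k)<\delta$ down to all $\delta\in(0,\delta_1]$, Lemmas~\ref{Lem:v<P} and~\ref{Lem:StabilityDDE:negativec123} to convert this into \eqref{odecond} on $\mathcal{E}_{(k)}(\delta,\pm\delta)\supseteq E_{(k)}(\delta,\pm\delta)$, and finally Theorem~\ref{Thm:Technique}. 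Your extra step of enlarging $\delta_1$ to $\delta^\sharp$ by continuity, and your insistence on uniform (in $\hat u/\delta$) control of the $\delta\to0$ limit, are careful refinements of points the paper treats tersely, not departures from its argument.
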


\begin{proof}
For this proof define
$$J=\bigcup_{\delta\in(0,|a/c|)} \Bigl\{P(\delta,c,k)<\delta\Bigr\}.$$
First we show that $J=\{P(1,0,k) <1\}$.
When $c=0$ it is seen that $\mathcal{I}(s\delta,\delta,0,k)/\delta$ is independent of $\delta$ for $k=1$, $2$ or $3$.
From this it follows that $P(\delta,0,k)/\delta=P(1,0,k)$.
Moreover, for all $c$, when $\delta\to 0$ then $r\to a$, and
$\mathcal{I}(s\delta,\delta,|c|,k)/\delta\to\mathcal{I}(s\delta,\delta,0,k)/\delta$,
since $c$ only appears multiplied by $\delta$ in these expressions.
Thus $P(\delta,c,k)/\delta\to P(1,0,k)$ as $\delta\to 0$.
Because of this and Lemma~\ref{Lem:StabilityDDE:P123}, $J=\{P(1,0,k) <1\}$.

Let $(\mu,\sigma)\in\{P(1,0,k) <1\}$. The existence of $\delta_1$ such
that $(\mu,\sigma) \in \{P(\delta_1,c,k) <\delta_1\}$ follows from the above discussion.
It also follows that $(\mu,\sigma) \in \{P(\delta,c,k) <\delta\}$ for all $\delta\in(0,\delta_1]$.

Let $\delta\in(0,\delta_1]$. Consider the auxiliary ODE \eqref{Eq:AuxiliaryODESDD}.
For all $\eta\in\mathcal{E}_{(k)}(\delta,\delta)$ it follows from
Lemmas~\ref{Lem:v<P} and~\ref{Lem:StabilityDDE:negativec123} that
$v(0)\leq \sup_{\hat u \in[-\delta,-\delta\mu/\sigma]}\mathcal{I}(\hat u,\delta,c,k)
\leqslant P(\delta,c,k)<\delta$.
Similarly, for all $\eta\in\mathcal{E}_{(k)}(\delta,-\delta)$ we obtain
$v(0)\geq-\sup_{\hat u \in[-\delta,-\delta\mu/\sigma]}\mathcal{I}(\hat u,\delta,-c,k)
\geqslant -P(\delta,c,k)>-\delta$.
Thus  \eqref{odecond} holds for all $\eta\in\mathcal{E}_{(k)}(\delta,x)$
or any $|x|=\delta$. This is true for all $\delta\in(0,\delta_1]$.
Since $E_{(k)}(\delta,x)\subseteq\mathcal{E}_{(k)}(\delta,x)$,
applying Theorem~\ref{Thm:Technique} completes the proof.
\end{proof}

\begin{figure}[t]
\begin{center}
\subfigure[$\{P(1,0,1)<1\}$]{\includegraphics[scale=1]{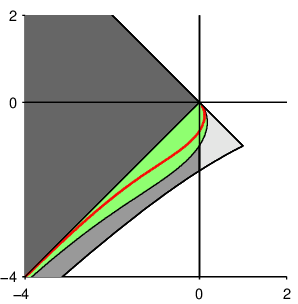}\label{Fig:StabilityExample_k=1}}
\put(-145,115){{\footnotesize $\sigma$}}
\put(-25,1){{\footnotesize $\mu$}}
\hspace{3em}
\subfigure[$\{P(1,0,2)<1\}$]{\includegraphics[scale=1]{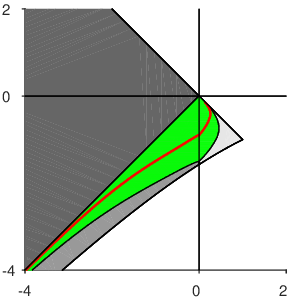}\label{Fig:StabilityExample_k=2}}
\put(-140,115){{\footnotesize $\sigma$}}
\put(-20,1){{\footnotesize $\mu$}}
\FigureCaption{For (a) $k=1$ and (b) $k=2$, the set $\{P(1,0,k)<1\}$ is shaded green and the
boundary of $\{P(1/2,1,k)<1/2\}$ is drawn in red. If $(\mu,\sigma)\in\{P(1,0,k)<1\}$ then the zero solution to \eqref{Eq:1Delay} is
asymptotically stable by Theorem~\ref{Thm:StabilityDDE:Razumikhin_k=123}.
As $\delta \to 0$ the proof of Theorem~\ref{Thm:StabilityDDE:Razumikhin_k=123} shows
that $\{P(\delta,c,k)<\delta\}$ converges to $\{P(1,0,k)<1\}$.}
\label{Fig:StabilityExample_k=12} 
\end{center}
\end{figure}

For $(\mu,\sigma)$ in the part of $\wedg\cup\cusp$ for which $P(1,0,k)<1$,
equation
\eqref{odecond} is satisfied for all $\eta\in\mathcal{E}_{(k)}(\delta,x)$ and hence
Theorem~\ref{Thm:StabilityDDE:Razumikhin_k=123} establishes asymptotic stability
These parameter sets are shown
in Figure~\ref{Fig:StabilityExample_k=12} for $k=1$ and $k=2$.
The stability region $\{P(1,0,1)<1\}$ shown in Figure~\ref{Fig:StabilityExample_k=12}(a)
for the DDE \eqref{Eq:1Delay} comprises a relatively small
part of $\wedg\cup\cusp$, because it is derived by requiring that \eqref{odecond} holds for
all $\eta\in\mathcal{E}_{(1)}(\delta,x)$. But $\mathcal{E}_{(1)}(\delta,x)$ is a very large set, with the
main restrictions on $\eta$ being that it is merely piecewise continuous with $\|\eta\|\leq\delta$.

We obtain a larger stability region by increasing $k$. This is seen in Figure~\ref{Fig:StabilityExample_k=12}(b)
where $P(1,0,2)<1$ ensures that \eqref{odecond} is satisfied for all $\eta\in\mathcal{E}_{(2)}(\delta,x)$
results in a significantly larger stability region than seen in Figure~\ref{Fig:StabilityExample_k=12}(a).
Since $\mathcal{E}_{(2)}(\delta,x)\subset\mathcal{E}_{(1)}(\delta,x)$, with all functions $\eta\in\mathcal{E}_{(2)}(\delta,x)$
satisfying the derivative bound $|\eta'(\theta)|\leq D_1\delta$, the set $\mathcal{E}_{(2)}(\delta,x)$ is smaller than
$\mathcal{E}_{(1)}(\delta,x)$ and it is possible to satisfy \eqref{odecond} over a larger region of $(\mu,\sigma)$
parameter space. We will compare the sizes of the stability regions $\{P(1,0,k)<1\}$ for different $k$ in
Section~\ref{Sec:StabilityDDE:Measurements}.

The boundary of $\{P(1/2,1,k)<1/2\}$ is also shown in Figure~\ref{Fig:StabilityExample_k=12} for $k=1$ and $k=2$.
As $\delta \to 0$ the sets $\{P(\delta,c,k)<\delta\}$ converge to the set $\{P(1,0,k)<1\}$, and
for $(\mu,\sigma)\in\{P(1,0,k)<1\}$
the inequality $P(\delta,c,k)<\delta$ can be used to determine the largest $\delta_1$ and hence the largest
$\delta_2$ for which Theorem~\ref{Thm:StabilityDDE:Razumikhin_k=123} applies.
This determines a ball which is contained in the basin of attraction of the steady state, and in
Section~\ref{Sec:Basins} we consider how the size of this lower bound on the basin of attraction varies with $k$.


%
%


\section{Comparison of the stability regions}
\label{Sec:StabilityDDE:Measurements}

In this section we compare the sets in which we can establish asymptotic stability of the
steady state of the state-dependent DDE \eqref{Eq:1Delay} using Lyapunov-Razumikhin techniques.
In Sections~\ref{Sec:Stability:k=123} we showed asymptotic stability
for $(\mu,\sigma)\in\cone$, and for $(\mu,\sigma)$
in the parts of the cusp $\cusp$ and wedge $\wedg$ for which $P(1,0,k)<1$ for $k=1,2,3$.
Measurements of these sets and the exact stability region $\Sigma_\star$ are presented in
Tables~\ref{Table:NumericalTests1}--\ref{Table:NumericalTests3}, and they are
illustrated 
in Figure~\ref{Fig:StabilityDDESummary}.

To compute these stability regions, from
Theorems~\ref{Thm:StabilityDDE:Razumikhin_k=123}
we need to compute $P(1,0,k)$ in the limiting case $c=0$, $\delta=1$.
This was done in MATLAB \cite{Matlab}. For $k=1$ and $2$ we have exact expressions for  $P(1,0,k)$ given by
\eqref{eq:Pdeltac1} and  \eqref{eq:Pdeltac2}.
Noting that $\sigma<0$ in $\wedg\cup\cusp$, from
\eqref{eq:Pdeltac1} we find that $(\mu,\sigma)$ satisfies $P(1,0,1)<1$ when
\begin{equation} \label{eq:P101<1}
\tfrac{\sigma^2}{\mu}(1-e^{\mu a})-\sigma-\mu e^{\mu a}>0.
\end{equation}
The boundary of $\{P(1,0,1)<1\}$ is defined by equality in \eqref{eq:P101<1}.

For $k=3$
the value of $P(1,0,3)$ was calculated by maximizing the function $\mathcal{I}(\hat u,1,0,k)$
over $\hat u\in [-1,-\mu/\sigma]$ using the MATLAB \verb#fminbnd# function.

The boundary of
$\{P(1,0,k)<1\}$ is then found by fixing one of $\mu$ or $\sigma$ and
using the \verb#fzero# function to find the value of the other one which solves $P(1,0,k)-1=0$
(except in the case $k=1$ where for given $\mu$, applying the quadratic formula to \eqref{eq:P101<1} determines $\sigma$).
The largest value of $\mu$ for each region (shown in Table~\ref{Table:NumericalTests3})
is then found by regarding the $\mu$ that solves $P(1,0,k)=1$ as a function of $\sigma$ and
using \verb#fminsearch# to find the $\sigma$ that maximises $\mu$. The boundary of the
full stability domain $\Sigma_\star$,
found by linearization, is given by Definition~\ref{Defn:StabilityRegion}.

\begin{figure}[t]
\subfigure[$\cone\cup\{P(1,0,1)<1\}$]{\includegraphics[scale=1]{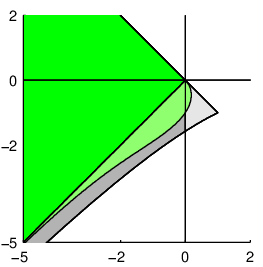}}
\subfigure[$\cone\cup\{P(1,0,2)<1\}$]{\includegraphics[scale=1]{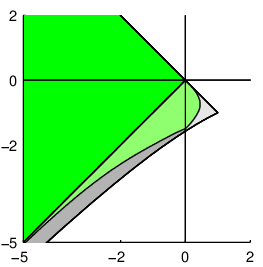}\label{Fig:StabilityRegion_k=22}}
\subfigure[$\cone\cup\{P(1,0,3)<1\}$]{\includegraphics[scale=1]{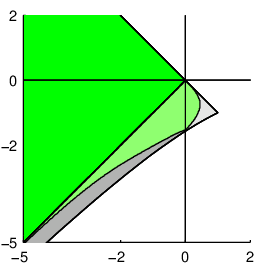}\label{Fig:StabilityRegion_k=3}}
\put(-128,102){{\footnotesize $\sigma$}}
\put(-255,102){{\footnotesize $\sigma$}}
\put(-22,1){{\footnotesize $\mu$}}
\put(-382,102){{\footnotesize $\sigma$}}
\put(-282,1){{\footnotesize $\mu$}}
\put(-150,1){{\footnotesize $\mu$}}
\caption{Stability regions found using
Theorems~\ref{Thm:localcone} and~\ref{Thm:StabilityDDE:Razumikhin_k=123}
shaded in green. Parameter pairs $(\mu,\sigma)\in\Sigma_\star$
which are contained in the wedge $\wedg$ or cusp $\cusp$ but not in $\{P(1,0,k)\}$ are shaded dark or light
grey respectively.}  \label{Fig:StabilityDDESummary}
\end{figure}

\begin{table}[t]
  \centering
 \begin{tabular}{|c|l|l|l|}
  \hline
   Region   & $\sigma$ at $\mu=-5$ & $\sigma$ at $\mu=-2$ & $\sigma$ at $\mu=0$\\
  \hline
  $\left\{P(1,0,1)<1\right\}$ & $-5.0673855$ & $-2.5578041$ &  $-1$\\
  \hline
  $\left\{P(1,0,2)<1\right\}$ & $-5.0676090$ & $-2.5875409$ & $-1.5=-3/2$\\
  \hline
  $\left\{P(1,0,3)<1\right\}$ & $-5.0678325$ & $-2.6127906$ & $-1.5416667=-37/24$\\
  \hline
  $\Sigma_\star$  & $-5.6605586$ & $-3.0396051$ & $-1.5707963=-\pi/2$ \\
  \hline
\end{tabular}
 \TableCaption{Boundaries of the stability regions:
 Values of $\sigma$ for fixed $\mu$ with $a=1$.} \label{Table:NumericalTests1}
\end{table}

\begin{table}[t]
  \centering
\begin{tabular}{|c|l|l|l|}
  \hline
   Region     & $\mu$ at $\sigma=-5$ &  $\mu$ at $\sigma=-2$ & $\mu$ at $\sigma=-1$ \\
  \hline
  $\left\{P(1,0,1)<1\right\}$ & $-4.9286634$ & $-1.16401463$ & $0$ \\
  \hline
  $\left\{P(1,0,2)<1\right\}$ & $-4.9283941$ & $-1.0040856$ & $0.38774807$ \\
  \hline
  $\left\{P(1,0,3)<1\right\}$ & $-4.9281247$ & $-0.93607885$ & $0.39925645$ \\
  \hline
  $\Sigma_\star$ & $-4.2734224$ & $-0.63804505$  & $1$ \\
  \hline
\end{tabular}
  \TableCaption{\label{Table:NumericalTests2}
  Boundaries of the stability regions: Values of $\mu$ for fixed $\sigma$ with $a=1$.}
\end{table}

\begin{table}[t]
  \centering
\begin{tabular}{|c|l|l|}
  \hline
    Region   & Supremum of $\mu$ & Corresponding value of $\sigma$ \\
  \hline
  $\left\{P(1,0,1)<1\right\}$ & $0.18822641=\ln((1+\sqrt{2})/2))$ & $-0.45439453$ \\
  \hline
  $\left\{P(1,0,2)<1\right\}$ & $0.45697166$ & $-0.73935547$ \\
  \hline
  $\left\{P(1,0,3)<1\right\}$ & $0.45700462$ & $-0.74059482$ \\
  \hline
  $\Sigma_\star$ & $1$ & $-1$  \\
  \hline
\end{tabular}
  \TableCaption{The values of $\mu$ and $\sigma$ at the rightmost boundary point of each stability region with $a=1$.} \label{Table:NumericalTests3}
\end{table}

Since by Theorem~\ref{Thm:StabilityDDE:Razumikhin_k=123},
at least for $k\leq 3$,
the Lyapunov-Razumikhin stability
regions are given by $P(1,0,k)<1$ irrespective
of the value of $c$, we obtain the same regions in the constant $c=0$ and variable
$c\ne0$ delay cases. This is consistent with the linearization theory of Gy\"ori and Hartung \cite{GyoriHartung:1}
who showed that $\Sigma_\star$ is the exponential stability region for both $c=0$ and $c\ne0$.

When $\mu=0$ the DDE \eqref{Eq:1Delay} becomes
\begin{equation} \label{Eq:1Delmu0}
\dot u(t)=\sigma u(t-a-cu(t))
\end{equation}
and the intervals of $\sigma$ values in the stability regions,
(shown in the last column of Table~\ref{Table:NumericalTests1})
can be found exactly.
From \eqref{eq:Pdeltac1} we have $P(1,0,1)=-\sigma a$ when $\mu=0$ which implies $\sigma\in(-1/a,0)$
for $P(1,0,1)<1$. Similarly, when $\sigma<-1/a$
from \eqref{eq:Pdeltac2} we have $P(1,0,2)=-\sigma a-1/2$ and hence $\sigma=-3/(2a)$
on the boundary of $\{P(1,0,2)<1\}$.
Magpantay \cite{Magpantay:1} shows that for $\mu=0$ we require $\sigma\in(-37/(24a),0)$ for $P(1,0,3)<1$.
For the constant delay case of \eqref{Eq:1Delmu0}, with $c=0$, Barnea \cite{Barnea:1} showed Lyapunov
stability for $\sigma\in[-3/(2a),0]$, by applying Lyapunov-Razumikhin techniques with $k=2$. The stability
bound $\sigma\geq-37/(24a)$ seems not to have been derived before for the constant delay case of \eqref{Eq:1Delmu0},
but is well-known for Wright's equation \cite{Wright:1} which is a nonlinear constant delay DDE whose
linear part corresponds to \eqref{Eq:1Delmu0} with $c=0$ (see \cite{Liz2003309,Wright:1}).

When $\mu=0$,
the boundary of $\{P(1,0,k)<1\}$ seems to converge rapidly to $-\pi/(2a)$,
the boundary of $\Sigma_\star$, as $k\to\infty$,
suggesting that the full stability interval can be recovered. Indeed, for constant delay with $\mu=c=0$
Krisztin \cite{Krisztin:1} showed Lyapunov stability for $\sigma\in(-\pi/(2a),0]$ by considering $k\to\infty$.


Barnea \cite{Barnea:1} and Myshkis \cite{Myshkis:1} also applied Razumikhin techniques to establish Lyapunov stability for \eqref{Eq:1Delay} in the case of constant delay ($c=0$) with $\mu\ne0$. The regions in which they claim stability are shown
in Figure~\ref{fig:BMLyap}. The region found by Myshkis \cite{Myshkis:1} has $\sigma\geq-1/a$ and $\mu\leq\sigma(a\sigma+1)/(a\sigma-1)$, which for $a=1$ always has $\mu\leq3-2\sqrt{2}$ and is contained in $\{P(1,0,1)<1\}$.

Barnea \cite{Barnea:1} claimed that
the Lyapunov stability region of \eqref{Eq:1Delay} with $c=0$ contains the
region $\mathcal{X}_2=\bigl\{(\mu,\sigma):0\leqslant s^* \leqslant a, \; P<1\bigr\}$  where
\[s^*=-\frac{e^{\mu a}}{\sigma}, \qquad P=\frac{\sigma(\mu+\sigma)}{\mu^2}\left[e^{\mu s^*}-\frac{\sigma}{\mu+\sigma}\right].\]
We show this region in Figure~\ref{Fig:X2}, but
Barnea did not actually graph $\mathcal{X}_2$ or give its derivation in \cite{Barnea:1}.
He noted that setting $P=1$ and letting $\mu\to0$ yields that the point $\sigma = -3/2a$
is a boundary of $\mathcal{X}_2$ on the $\sigma$-axis. We observe that setting $s^* =0$ and $\mu\to0$
yields $\sigma = -1/a$ as the other boundary of $\mathcal{X}_2$ on the $\sigma$-axis.
Thus the region $\mathcal{X}_2$ does not include the whole interval $\sigma\in(-3/2a,0]$ on the
$\sigma$-axis which Barnea had proven to be Lyapunov stable in the $\mu=0$ case in the same paper \cite{Barnea:1}.
Barnea's stability region $\mathcal{X}_2$ is hence incomplete. Although the $\eta_{(2)}$ function
used by Barnea to show Lyapunov stability corresponds to \eqref{Eq:eta2} with $c=0$,
it appears that Barnea performed his integration assuming that
$\frac{\delta+\hat u}{D_1\delta}\leq r_+$ in all cases.
The case when $\frac{\delta+\hat u}{D_1\delta}>r_+$ occurs in the $\mu=c=0$ case
(as well as the general case $c\ne0$, $\mu\ne0$ considered in \eqref{Eq:I1},\eqref{Eq:I2}).
Omitting this case results in the incorrect stability region $\mathcal{X}_2$.
The correct region is $(\mu,\sigma)\in\bigl\{P(1,0,2)<1\bigr\}$ as illustrated in Figure~\ref{Fig:StabilityRegion_k=22}.
Moreover within this region we show the stronger property of asymptotic stability for both
the constant delay ($c=0$) and state-dependent delay ($c\ne 0$) cases.

\begin{figure}[t]
\begin{center}
\subfigure[]{\includegraphics[scale=1]{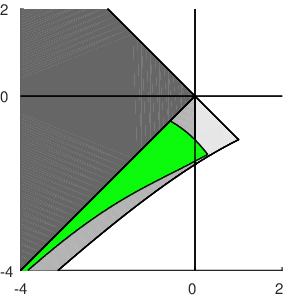}\label{Fig:X2}}
\put(-140,115){{\footnotesize $\sigma$}}
\put(-25,1){{\footnotesize $\mu$}}
\hspace{3em}
\subfigure[]{\includegraphics[scale=1]{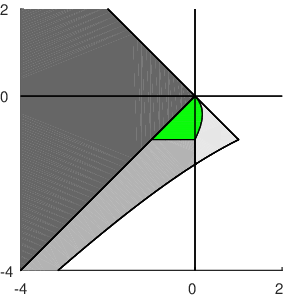}\label{Fig:M2}}
\put(-140,115){{\footnotesize $\sigma$}}
\put(-25,1){{\footnotesize $\mu$}}
\FigureCaption{(a) The set $\mathcal{X}_2\setminus\cone$ which is part of the stability region of \eqref{Eq:1Delay} for the constant delay case ($c=0$) according to Barnea \cite{Barnea:1}.
(b) The part of the stability domain outside $\cone$ for the same problem found by Myshkis \cite{Myshkis:1}.}
\label{fig:BMLyap}
\end{center}
\end{figure}



Tables~\ref{Table:NumericalTests1} and~\ref{Table:NumericalTests2} also show that for $\mu<0$ we can show asymptotic
stability in a larger part of $\Sigma_\star$ by increasing $k$. However when
$\mu \ll0$ the improvement in going from $k=1$ to $2$ to $3$ is very marginal and we can only show asymptotic stability
in a slice of the wedge $\wedg$ whose width appears to go to zero as $\mu\to-\infty$.
The problem here is that as $\mu\to-\infty$ the DDE \eqref{Eq:1Delay}
is singularly perturbed and can be written as the so-called saw-tooth equation
$$\epsilon\dot{u}(t)=u(t)+Ku(t-a-cu(t))$$
where $\epsilon=1/\mu$ and $K=\sigma/\mu$.
This DDE had been studied in detail in \cite{MalletParetNussbaum:4} and
for $K>1$ sufficiently large (corresponding to $(\mu,\sigma)$ outside $\Sigma_\star$) the steady state is
unstable, but there is an asymptotically stable slowly oscillating periodic solution.
This periodic solution, known as the sawtooth solution,
has unbounded gradient and a discontinuous profile in the singular limit.
For parameter values inside the wedge $\wedg$
the steady state is asymptotically stable, and for large and negative $\mu$ there are no periodic solutions but a slowly decaying sawtooth-like oscillation can occur. Lyapunov-Razumikhin techniques based on
bounding derivatives of solutions cannot perform well when those derivatives can be arbitrarily large. To
improve the results in this case it would be necessary to define different sets $\mathcal{E}_{(k)}^*(\delta,x)$
which take into account the structure of the oscillations and are hopefully much closer to
$E_{(k)}(\delta,x)$ than the sets $\mathcal{E}_{(k)}(\delta,x)$ that we use here.

For $\mu>0$ there is a significant improvement in the computed stability domain in going from $k=1$ to $k=2$
and a smaller improvement using $k=3$. The largest value of $\mu$ which satisfies $P(1,0,1)\leq1$
can be computed from \eqref{eq:P101<1} which is quadratic in $\sigma$. Then non-negativity of the discriminant
imposes the bound that $\mu<(1/a)\ln((1+\sqrt{2})/2)\approx0.1882/a$, as seen in
Table~\ref{Table:NumericalTests3}.

Although the parameter regions in which we can show asymptotic stability are independent of $c$, we will
see in Section~\ref{Sec:Basins}
that the basins of attraction do depend on $c$.

\section{Basins of attraction}
\label{Sec:Basins}
Theorem~\ref{Thm:StabilityDDE:Razumikhin_k=123} shows that for $(\mu,\sigma)\in\wedg\cup\cusp$
the ball
\begin{equation} \label{Eq:basink}
\left\{\phi:\|\phi\|< \delta_2=\delta_1e^{-k(|\mu|+|\sigma|)(a+|c|\delta_1)}\right\}
\end{equation}
is contained in the basin of attraction of the steady-state of the state-dependent DDE \eqref{Eq:1Delay}
for $k=1$, $2$ and $3$. For fixed $\delta_1$ the radius of this ball gets smaller as $k$ increases, but the value of $\delta_1$ depends on $k$, $\mu$ and $\sigma$, and some work is required to determine the largest such ball
that is contained in the basin of attraction.
In \cite{MH:1}
we show that \eqref{Eq:basink} can be improved when $\mu<0$, so here we will consider
$(\mu,\sigma)\in\cusp$,
where $\sigma<0\leq\mu$.
Lemma~\ref{lem:1delbound} does not apply
when $\mu\geqslant 0$, so there is no \textit{a priori} bound on the solutions to \eqref{Eq:1Delay} in this case.
We present two examples which show that \eqref{Eq:1Delay} can have unbounded solutions when $\mu\geq0$, which also
shows that the steady-state is not globally asymptotically stable when $(\mu,\sigma)\in\cusp$ and gives an
upper bound on the largest ball contained in its basin of attraction. For simplicity of exposition we suppose
$c>0$ in this section, but the results can easily be extended to $c<0$. We first consider $\mu=0$.



\newcommand{\dUB}{\delta^{*}}

\begin{figure}[t]
\begin{center}
\subfigure[{$\delta_1$ when $\mu=0$, $\sigma\in[-\pi/2,0]$}]{\includegraphics[scale=1]{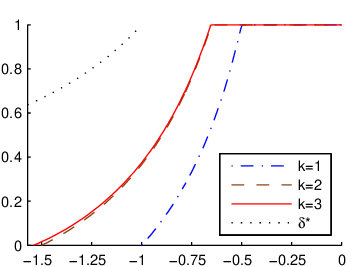}}
\put(-21,0){{\footnotesize $\sigma$}}
\put(-174,104){{\footnotesize $\delta$}}
\mbox{}\hspace{2em}\mbox{}
\subfigure[{$\delta_2$ when $\mu=0$, $\sigma\in[-\pi/2,0]$}]{\includegraphics[scale=1]{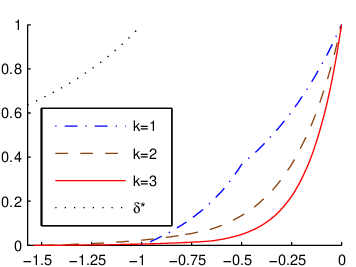}}
\put(-21,0){{\footnotesize $\sigma$}}
\put(-174,104){{\footnotesize $\delta$}}
\caption{For fixed $a=c=1$, $\mu=0$ and $\sigma\in[-\pi/2,0]$, (a) supremum of $\delta_1\in(0,a/c]$ such that $(\mu,\sigma)\in\{P(\delta_1,c,k)<\delta_1\}$
with $k=1$, $2$, $3$, and,
(b) $\delta_2=\delta_1 e^{-k|\sigma|(a+|c|\delta_1)}$. The value of $\dUB$ from Example~\ref{Example:mu0}
is also shown in both plots.
} \label{Fig:Basinsmu0}
\end{center}
\end{figure}

\begin{example} \label{Example:mu0}
Consider \eqref{Eq:1Delay} with $c>0$, $a>0$, $\mu=0$ and $\sigma\in[-\pi/2a,-1/a)$ and
for $\delta\in[-1/(c\sigma),a/c)$
let $\varphi(t)$ be Lipschitz continuous with
\begin{gather*}
\varphi(0)=\delta, \qquad
\varphi(t)=-\delta\text{ for } t\leqslant -a-c\delta,\\
\varphi(t)\in(-\delta,\delta) \text{ for } t\in(-a-c\delta,0).  
\end{gather*}
Then while the deviated argument $\alpha(t,u(t))=t-a-cu(t)\leqslant -a-c\delta$ we have
\[u(t-a-cu(t))=-\delta \quad \text{and} \quad \dot u(t)=-\sigma\delta.\]
Hence,
\begin{equation}\label{Eq:Ex1}
u(t)=\delta(1-\sigma t)\geqslant \delta+\frac{t}{c} ,\quad \text{for }t\geqslant 0.
\end{equation}
But now $\alpha(t,u(t))\leqslant -a- c\delta$ for all $t\geqslant 0$ and \eqref{Eq:Ex1} is valid for all $t\geqslant 0$.
Thus for $\mu=0$, $\sigma\in[-\pi/2a,-1/a)$ on the axis between the third and fourth quadrants of the
stability region we have
$\|\varphi\|=\delta$ and $|u(t)|\to\infty$ as $t\to\infty$.
It follows that the steady state is not globally asymptotically stable and also that $B(0,\delta)$ is not
contained in its basin of attraction. Thus $\dUB=-1/c\sigma$ provides an upper bound on the radius of the largest ball contained in the basin of attraction.
\end{example}

Figure~\ref{Fig:Basinsmu0} shows three bounds on the basin of attraction of the steady state of \eqref{Eq:1Delay}.
The value of $\dUB$ from Example~\ref{Example:mu0} gives an upper bound on the radius of the largest ball contained in the
basin of attraction. Two lower bounds on the radius of the largest ball are also shown. The larger bound $\delta_1$
gives the radius of the ball that
Theorem~\ref{Thm:StabilityDDE:Razumikhin_k=123} shows
is contracted asymptotically to the steady state provided
the solution is sufficiently differentiable. Lemma~\ref{Lem:FiniteBoundI} is used to ensure that the solution
remains bounded long enough to acquire sufficient regularity, and the growth in the solution allowed by that lemma results
in the smaller radius $\delta_2$ (as defined by \eqref{Eq:basink})
of the ball that is contained in the basin of attraction for general continuous initial functions $\phi$.
We see that the bounds $\delta_1$ increase monotonically with $k$, but because of the exponential term in
\eqref{Eq:basink}, the largest value of $\delta_2$ is achieved with $k=1$ in most of the interval for
which $P(1,0,1)<1$.

Now consider the case of $\mu>0$. We can again derive an upper bound on the basin of attraction of the steady state
when $(\mu,\sigma)\in\cusp$.

\begin{figure}[t]
\begin{center}
\subfigure[{$\delta_1$ when $\sigma=-1$, $\mu\geq0$}]{\includegraphics[scale=1]{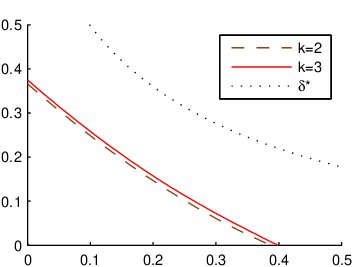}}
\put(-23,-2){{\footnotesize $\mu$}}
\put(-176,104){{\footnotesize $\delta$}}
\mbox{}\hspace{2em}\mbox{}
\subfigure[{$\delta_2$ when $\sigma=-1$, $\mu\geq0$}]{\includegraphics[scale=1]{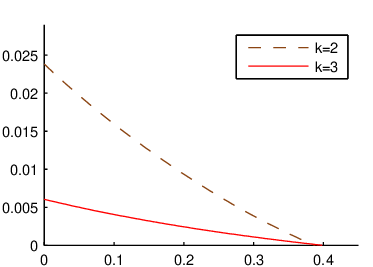}}
\put(-10,0){{\footnotesize $\mu$}}
\put(-174,108){{\footnotesize $\delta$}}
\caption{For fixed $a=c=1$, $\sigma=-1$ and $\mu>0$, (a) supremum of $\delta_1\in(0,a/c]$ such that $(\mu,\sigma)\in\{P(\delta_1,c,k)<\delta_1\}$
with $k=2$, $3$, and,
(b) $\delta_2=\delta_1 e^{-k(|\mu|+\sigma|)(a+|c|\delta_1)}$. The value of $\dUB$ from Example~\ref{Example:2}
is also shown in (a).
}  \label{Fig:Basinssigm1}
\end{center}
\end{figure}

\begin{example} \label{Example:2}
Let $a>0$, $c>0$, $\mu>0$ and $(\mu,\sigma)\in\cusp$ so $\sigma<-\mu<0$. Also let
$$q(\delta)=-a\mu-1-c\sigma\delta-\ln\bigl(c\delta(\mu-\sigma)\bigr).$$
Note that $q(1/(c(\mu-\sigma)))=-\mu(a+1/(\mu-\sigma))<0$,
while $q'(\delta)=-c\sigma-1/\delta<0$ for all $\delta\in(0,1/(c(\mu-\sigma)))$.
Also $q(\delta)\to\infty$ as $\delta\to 0$, hence there exists
$\dUB\in\bigl(0,1/(c(\mu-\sigma))\bigr)$ such that $q(\dUB)=0$ and $\dUB$ is unique in this interval.

Suppose that the parameters are chosen so that $\dUB<a/c$. A sufficient (but not necessary) condition for this is $\sigma<\mu-1/a$ since this implies $1/(c(\mu-\sigma))\leqslant a/c$.  Now let $\delta\in(\dUB,a/c)$ so $q(\delta)<0$ and consider \eqref{Eq:1Delay} with $\varphi(t)$
Lipschitz continuous and
\begin{gather*}
\varphi(0)=\delta, \qquad
\varphi(t)=-\delta\text{ for } t\leqslant \mu q(\delta),\\
\varphi(t)\in(-\delta,\delta) \text{ for } t\in(\mu q(\delta),0),
\end{gather*}
Then \eqref{Eq:1Delay} has solution
\begin{equation}
u(t)=\frac{\sigma \delta}{\mu} + \delta e^{\mu t} \left[\frac{\mu-\sigma}{\mu}\right]
\label{Eq:Ex2}
\end{equation}
with $u(t-a-cu(t))=-\delta$ for all $t\geqslant 0$. To see this note that
$$\alpha(t,u(t))=t-a-cu(t)=t-a-\frac{c\sigma\delta}{\mu}-c\delta e^{\mu t}\left[\frac{\mu-\sigma}{\mu}\right],$$
with $\alpha(t,u(t))\to-\infty$ as $t\to\infty$. Differentiating the expression for $\alpha(t,u(t))$ shows that
$\alpha(t,u(t))\leqslant \mu q(\delta)<0$ for all $t\geq0$,
with $\alpha(t,u(t))=\mu q(\delta)$ when $t=-\mu^{-1}\ln \left( c \delta (\mu-\sigma)\right)$.
Hence, as in Example~\ref{Example:mu0}, we have $\|\varphi\|=\delta$ and $|u(t)|\to\infty$ as $t\to\infty$. The steady state is not globally asymptotically stable and the ball $B(0,\delta)$ is not contained in its basin of attraction. Thus $\dUB$ provides an upper bound on the radius of the largest ball contained in the basin of attraction.
\end{example}

For $\sigma=-1$ and $\mu\geq0$, Figure~\ref{Fig:Basinssigm1} shows the same bounds $\delta_1$, $\delta_2$ and $\delta^*$ on the radius of the basin of attraction of the steady state as were shown in Figure~\ref{Fig:Basinsmu0}. Since these parameters are outside the set $\{P(1,0,1)<1\}$ no bound is shown for $k=1$. On nearly all of this interval $k=2$ gives the largest lower bound $\delta_2$ on the radius of
a ball contained in the basin of attraction.

\begin{figure}[t]
\begin{center}
\subfigure[$\delta_1$]{\includegraphics[scale=1]{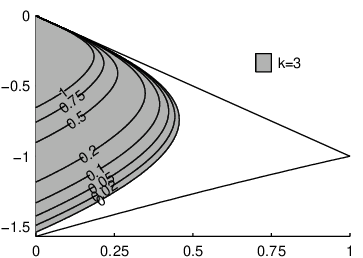}}
\put(-25,-2){{\footnotesize $\mu$}}
\put(-162,110){{\footnotesize $\sigma$}}
\subfigure[$\delta_2$]{\includegraphics[scale=1]{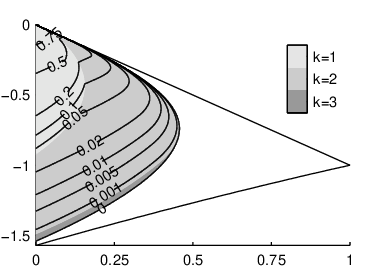}}
\put(-30,0){{\footnotesize $\mu$}}
\put(-170,100){{\footnotesize $\sigma$}}

\vspace{-2ex}

\subfigure[$\dUB$]{\includegraphics[scale=1]{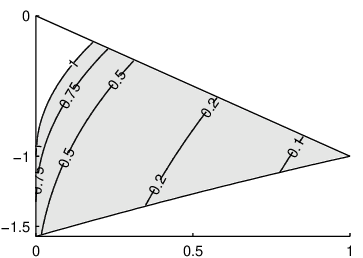}}
\put(-30,0){{\footnotesize $\mu$}}
\put(-170,90){{\footnotesize $\sigma$}}
\caption{Plot of $\cusp$ for fixed $a=c=1$ with contour plots of (a) the maximum $\delta_1\in(0,a/c]$
such that $(\mu,\sigma)\in\{P(\delta_1,c,k)<\delta_1\}$, and (b) the $\delta_2$ that maximizes $\delta_2=\delta e^{-k(|\mu|+|\sigma|)(a+|c|\delta}$ for $\delta\in(0,\delta_1]$. Shading shows the value $k\in\{1,2,3\}$ for
which the maximum is achieved. (c) The upper bound $\dUB$ from Example~\ref{Example:2} for the radius of the largest ball $B(0,\delta)$ contained in the basin of attraction of the zero solution to \eqref{Eq:1Delay}.}  \label{Fig:Basins2}
\end{center}
\end{figure}

Figure~\ref{Fig:Basins2} shows these bounds on the basin of attraction in the cusp $\cusp$.
The shaded region in Figure~\ref{Fig:Basins2}(c) denotes the portion of $\cusp$ for which $\dUB\leqslant a/c$ when $a=c=1$, and hence $\dUB$ from Example~\ref{Example:2} gives an upper bound on the radius of the largest ball contained in the basin of attraction. The corresponding bounds $\dUB$ are shown as contours within this
region. Figure~\ref{Fig:Basins2}(a) and (b) shows the lower bounds $\delta_1$ and $\delta_2$, along with the
value of $k$ that achieves the bound.

In all three figures in this section, $\delta_2$ is computed using \eqref{Eq:basink} and $\delta_1$
is obtained from solving $P(\delta_1,c,k)=\delta_1$ similarly to computations described in
Section~\ref{Sec:StabilityDDE:Measurements}.


\section{Conclusions}
\label{Conclusions}
In this paper we have expanded upon the existing work on Lyapunov-Razumikhin techniques 
by providing results specifically tailored to DDEs with time-varying discrete delays including problems with
state-dependent delays and vanishing delays. Our main results provide sufficient conditions for Lyapunov and asymptotic stability of steady state solutions of DDEs in Theorems~\ref{Thm:Lyapunov_NA} and \ref{Thm:Technique} respectively. These conditions involve converting the DDE into a corresponding ODE problem with the delay terms treated as source terms that satisfy constraints. Our results require a Lipschitz condition on the right-hand side function $f$ in \eqref{Eq:GeneralMultipleDelay} instead of the more restrictive Lipschitz condition on $F$ in \eqref{Eq:RFDE} required in Barnea \cite{Barnea:1}, and do not require the construction of auxiliary functions as required by Hale and Verduyn Lunel~\cite{HaleLunel:1}. Nevertheless we are able to show asymptotic stability, using a proof
by contradiction showing that there cannot exist a solution which is not asymptotic to the steady state.

We apply our results to the model state-dependent DDE \eqref{Eq:1Delay} in
Sections~\ref{Sec:Stability:k=123}--\ref{Sec:Basins}.
The main result of the application of Lyapunov-Razumikhin techniques to \eqref{Eq:1Delay} is given as Theorems~\ref{Thm:StabilityDDE:Razumikhin_k=123}
where we prove that the zero solution to \eqref{Eq:1Delay} is asymptotically stable if $(\mu,\sigma)\in\{P(1,0,k)<1\}$, for $k=1$, $2$ or $3$ and provide lower bounds on the basin of attraction.

The parameter regions in which stability is proven in these theorems are compared in Section~\ref{Sec:StabilityDDE:Measurements}.
As shown in Figure~\ref{Fig:StabilityDDESummary}, the derived parameter regions grow as larger values of $k$ are used, 
though for $\mu\ne0$,
the derived stability region does not approach the entire known stability region $\Sigma_\star$ as $k\to \infty$
(for reasons discussed in
in Section~\ref{Sec:StabilityDDE:Measurements}).

In Section~\ref{Sec:Basins} we consider \eqref{Eq:1Delay} in the cusp $\cusp$
where $\mu>0$ and the steady state would be unstable without the delay term.
In Examples~\ref{Example:mu0} and~\ref{Example:2} we constructed solutions
which do not converge to the steady state for $(\mu,\sigma)\in\cusp$.
These solutions provide us with an upper bound $\dUB$ on the radius of the largest ball about the zero solution
contained in the basin of attraction.
In Figures~\ref{Fig:Basinsmu0}--\ref{Fig:Basins2} these upper bounds were compared with the lower bound $\delta_2$ on the basin of attraction from \eqref{Eq:basink}.



In the current work have studied stability through Lyapunov-Razumikhin techniques, but let us briefly compare and contrast this approach to the alternative, namely linearization. State-dependent DDEs
have long been linearized by freezing the delays at their steady-state values and linearizing the resulting constant delay DDE \cite{Cooke,CookeHuang}. This heuristic approach has recently been put on a rigorous footing.
For a class of state-dependent DDEs which includes \eqref{Eq:1Delay} with $\mu=0$,
Gy\"ori and Hartung \cite{GyoriHartung:2} proved that the steady state of the state-dependent DDE is
exponentially stable if and only if the steady state of the corresponding frozen-delay DDE is
exponentially stable. In \cite{GyoriHartung:1} they generalise this result to a class of nonautonomous problems
which are linear except for the state-dependency.

To compare and contrast our results with the linearization results of \cite{GyoriHartung:1}, we note
that our results apply to
a larger class of problems \eqref{Eq:GeneralMultipleDelay} than was considered in \cite{GyoriHartung:1},
and we prove both Lyapunov stability and asymptotic stability results, whereas \cite{GyoriHartung:1} is concerned
with exponential stability. The results in \cite{GyoriHartung:1} do apply directly to our
model problem \eqref{Eq:1Delay}, and
reveal the parameter region for which the steady state is exponentially stable. In contrast our Lyapunov-Razumikhin
techniques are only able to deduce stability in part of this parameter region.



Even though Lyapunov-Razumikhin techniques do not provide a proof of stability in the entire known stability region for \eqref{Eq:1Delay}, just as Lyapunov functions for ODEs do not always do so, they can nevertheless still be
a very useful tool for studying stability in state-dependent DDEs. In particular our
Lyapunov stability result is applicable to nonautonomous problems (for some of which rigorous linearization
has yet to be derived) and the asymptotic stability result
yield bounds on the basins of attraction which cannot be derived through linearization.


\section*{Acknowledgments}
ARH is grateful to Tibor Krisztin, John Mallet-Paret, Roger Nussbaum and Hans-Otto Walther
for productive discussions and suggestions, and to the National Science and Engineering Research Council (NSERC), Canada for funding through the Discovery Grant program. FMGM is grateful to Jianhong Wu for helpful discussions, and to McGill University, York University, the Institut des Sciences Math\'ematiques (Montreal, Canada) and NSERC for funding. We are grateful to an anonymous referee whose feedback significantly improved the manuscript.

\def\bibsection{\section*{References}}

\bibliographystyle{plain}

\begin{thebibliography}{10}

\bibitem{Barnea:1}
D.I. Barnea.
\newblock A method and new results for stability and instability of autonomous
  functional differential equations.
\newblock {\em SIAM J. Appl. Math}, 17:681--697, 1969.

\bibitem{BellenZennaro:1}
A.~Bellen and M.~Zennaro.
\newblock {\em Numerical Methods for Delay Differential Equations}.
\newblock Numerical Mathematics and Scientific Computation. Oxford Science
  Publications, New York, 2003.

\bibitem{CHK:1}
R.C. Calleja, A.R. Humphries, and B.~Krauskopf.
\newblock Resonance phenomena in a scalar delay differential equation with two
  state-dependent delays.
\newblock {\em SIAM J. Appl. Dyn. Syst.}, 16:1474--1513, 2017.

\bibitem{Cooke}
K.L. Cooke.
\newblock Asymptotic theory for the delay-differential equation {{\(u'(t) =
  -au(t - r(u(t)))\)}}.
\newblock {\em J. Math. Anal. Appl.}, 19:160--173, 1967.

\bibitem{CookeHuang}
K.L. Cooke and W.~Huang.
\newblock On the problem of linearization for state-dependent delay
  differential equations.
\newblock {\em Proc. Amer. Math. Soc.}, 124:1417--1426, 1996.

\bibitem{CHM16}
M.~Craig, A.R. Humphries, and M.C. Mackey.
\newblock A mathematical model of granulopoiesis incorporating the negative
  feedback dynamics and kinetics of {G-CSF}/neutrophil binding and
  internalization.
\newblock {\em Bull. Math. Biol.}, 78:2304--2357, 2016.

\bibitem{DGVLW95}
O.~Diekmann, S.A. van Gils, S.M. Verduyn~Lunel, and H.-O. Walther.
\newblock {\em Delay Equations Functional-, Complex-, and Nonlinear Analysis},
  volume 110 of {\em Applied Mathematical Sciences}.
\newblock Springer-Verlag, 1995.

\bibitem{Driver:1}
R.D. Driver.
\newblock Existence theory for a delay-differential system.
\newblock {\em Contribs Diff. Eqns.}, 1:317--336, 1963.

\bibitem{ElsgoltsNorkin:1}
L.E. El'sgol'ts and S.B. Norkin.
\newblock {\em Introduction to the Theory and Application of Differential
  Equations with Deviating Arguments}.
\newblock Academic Press, New York, 1973.

\bibitem{GW13}
S.~Guo and J.~Wu.
\newblock {\em Bifurcation Theory of Functional Differential Equations}, volume
  184 of {\em Applied Mathematical Sciences}.
\newblock Springer-Verlag, 2013.

\bibitem{GyoriHartung:2}
I.~Gy\"ori and F.~Hartung.
\newblock On the exponential stability of a state-dependent delay equation.
\newblock {\em Acta Sci. Math. (Szeged)}, 66:71--84, 2000.

\bibitem{GyoriHartung:1}
I.~Gy\"ori and F.~Hartung.
\newblock Exponential stability of a state-dependent delay system.
\newblock {\em Discrete Contin. Dyn. Syst. Ser. A}, 18:773--791, 2007.

\bibitem{Hale:1}
J.K. Hale.
\newblock {\em Theory of Functional Differential Equations}, volume~3 of {\em
  Applied Mathematical Sciences}.
\newblock Springer-Verlag, New York, 1977.

\bibitem{HaleLunel:1}
J.K. Hale and S.M. Verduyn~Lunel.
\newblock {\em Introduction to Functional Differential Equations}, volume~99 of
  {\em Applied Mathematical Sciences}.
\newblock Springer-Verlag, New York, 1993.

\bibitem{HartungKrisztinWaltherWu:1}
F.~Hartung, T.~Krisztin, H.-O. Walther, and J.~Wu.
\newblock Functional differential equations with state-dependent delays: theory
  and applications.
\newblock In A.~Ca\~nada, P.~Dr\'abek, and A.~Fonda, editors, {\em Handbook of
  Differential Equations: Ordinary Differential Equations}, volume~3, pages
  435--545. Elsevier/North Holland, 2006.

\bibitem{Hayes50}
N.D. Hayes.
\newblock Roots of the transcendental equation associated with a certain
  difference-differential equation.
\newblock {\em J. London Math. Soc.}, s1-25:226--232, 1950.

\bibitem{HBCHM:1}
A.R. Humphries, D.A. Bernucci, R.~Calleja, N.~Homayounfar, and M.~Snarski.
\newblock Periodic solutions of a singularly perturbed delay differential
  equation with two state-dependent delays.
\newblock {\em J. Dyn. Diff. Eqs.}, 28:1215--1263, 2016.

\bibitem{Humphries:1}
A.R. Humphries, O.~DeMasi, F.M.G. Magpantay, and F.~Upham.
\newblock Dynamics of a delay differential equation with multiple state
  dependent delays.
\newblock {\em Discrete Contin. Dyn. Syst. Ser. A}, 32:2701--2727, 2012.

\bibitem{IS11}
T.~Insperger and St\'ep\'an.
\newblock {\em Semi-Discretization for Time-Delay Systems}, volume 178 of {\em
  Applied Mathematical Sciences}.
\newblock Springer-Verlag, 2011.

\bibitem{IST07}
T.~Insperger, G.~St\'ep\'an, and J.~Turi.
\newblock State-dependent delay in regenerative turning processes.
\newblock {\em Nonlinear Dyn.}, 47:275--283, 2007.

\bibitem{IvanovLizTrofimchuk:1}
A.~Ivanov, E.~Liz, and S.~Trofimchuk.
\newblock Halanay inequality, {Y}orke 3/2 stability criterion, and differential
  equations with maxima.
\newblock {\em Tohoku Math. J.}, 54:277--295, 2002.

\bibitem{Kato:1}
J.~Kato.
\newblock On {Liapunov-Razumikhin} type theorems for functional differential
  equations.
\newblock {\em Funkcialaj Ekvacioj}, 16:225--239, 1973.

\bibitem{KE:1}
G.~Kozyreff and T.~Erneux.
\newblock Singular {Hopf} bifurcation in a differential equation with large
  state-dependent delay.
\newblock {\em Proc. Roy. Soc. A}, 470:0596, 2013.

\bibitem{Krasovskii:1}
N.N. Krasovskii.
\newblock {\em Stability of Motion Applications of Lyapunov's Second Method to
  Differential Equations with Delay}.
\newblock Stanford University Press, Stanford, California, 1963.

\bibitem{Krisztin:1}
T.~Krisztin.
\newblock Stability for functional differential equations and some variational
  problems.
\newblock {\em Tohoku Math. J.}, 42:402--417, 1990.

\bibitem{Krisztin:3}
T.~Krisztin.
\newblock On stability properties for one-dimensional functional differential
  equations.
\newblock {\em Funkcialaj Ekvacioj}, 34:241--256, 1991.

\bibitem{Liz2003309}
E.~Liz, M.~Pinto, G.~Robledo, S.~Trofimchuk, and V.~Tkachenko.
\newblock Wright type delay differential equations with negative {Schwarzian}.
\newblock {\em Discrete and Contin. Dyn. Syst. Ser. A}, 9:309--321, 2003.

\bibitem{Mackey89}
M.C. Mackey.
\newblock Commodity price fluctuations: Price dependent delays and
  nonlinearities as explanatory factors.
\newblock {\em J. Econ. Theory}, 48:497 -- 509, 1989.

\bibitem{Magpantay:1}
F.M.G. Magpantay.
\newblock {\em On the stability and numerical stability of a model state
  dependent delay differential equation}.
\newblock PhD thesis, McGill University, Department of Mathematics and
  Statistics, 2012.

\bibitem{MH:1}
F.M.G. Magpantay and A.R. Humphries.
\newblock Generalised {Lyapunov-Razumikhin} techniques for scalar
  state-dependent delay differential equations.
\newblock {\em Discrete Contin. Dyn. Syst. Ser. S}, 13:85--104, 2020.

\bibitem{MalletParetNussbaum:1}
J.~Mallet-Paret and R.D. Nussbaum.
\newblock Boundary layer phenomena for differential-delay equations with
  state-dependent time lags, {I}.
\newblock {\em Arch. Ration. Mech. Anal.}, 120:99--146, 1992.

\bibitem{MalletParetNussbaum:2}
J.~Mallet-Paret and R.D. Nussbaum.
\newblock Boundary layer phenomena for differential-delay equations with
  state-dependent time lags: {II}.
\newblock {\em J. Reine Angew. Math.}, 477:129--197, 1996.

\bibitem{MalletParetNussbaum:3}
J.~Mallet-Paret and R.D. Nussbaum.
\newblock Boundary layer phenomena for differential-delay equations with
  state-dependent time lags: {III}.
\newblock {\em Discrete Contin. Dyn. Syst. Ser. A}, 189:640--692, 2003.

\bibitem{MalletParetNussbaum:4}
J.~Mallet-Paret and R.D. Nussbaum.
\newblock Superstability and rigorous asymptotics in singularly perturbed
  state-dependent delay-differential equations.
\newblock {\em J. Diff. Eqns.}, 250:4037--4084, 2011.

\bibitem{Matlab}
Mathworks.
\newblock {\em MATLAB 2017a}.
\newblock Mathworks, Natick, Massachusetts, 2017.

\bibitem{Myshkis:1}
A.~Myshkis.
\newblock Razumikhin's method in the qualitative theory of processes with
  delay.
\newblock {\em J. Appl. Math. Stoch. Anal.}, 8:233--247, 1995.

\bibitem{Razumikhin:1}
B.S. Razumikhin.
\newblock An application of {Lyapunov} method to a problem on the stability of
  systems with a lag.
\newblock {\em Autom. Remote Control}, 21:740--748, 1960.

\bibitem{Smith:1}
H.~Smith.
\newblock {\em An Introduction to Delay Differential Equations with
  Applications to the Life Sciences}.
\newblock Texts in Applied Mathematics. Springer, New York, 2011.

\bibitem{Stumpf:1}
E.~Stumpf.
\newblock Local stability analysis of differential equations with
  state-dependent delay.
\newblock {\em Discrete Contin. Dyn. Syst. Ser. A}, 36:3445--3461, 2016.

\bibitem{W03}
H.-O. Walther.
\newblock On a model for soft landing with state dependent delay.
\newblock {\em J. Dyn. Diff. Eqns.}, 19:593--622, 2003.

\bibitem{Winston74}
E.~Winston.
\newblock Uniqueness of solutions of state dependent delay differential
  equations.
\newblock {\em Journal of Mathematical Analysis and Applications}, 47:620 --
  625, 1974.

\bibitem{Wright:1}
E.M. Wright.
\newblock A non-linear difference-differential equation.
\newblock {\em J. Reine Angew. Math.}, 194:66--87, 1955.

\bibitem{Yorke:1}
J.A. Yorke.
\newblock Asymptotic stability for one dimensional differential-delay
  equations.
\newblock {\em J. Diff. Eqns.}, 7:189--202, 1970.

\end{thebibliography}


\appendix

\titleformat{\section}{\normalfont\bfseries}{Appendix \thesection.}{1em}{}
\renewcommand{\thesection}{\Alph{section}}
\renewcommand{\thefigure}{\arabic{figure}}


\section{An explicit expression for the region \boldmath$P(\delta,c,2)<\delta$}
\label{Appendix:Explicit}
Here we prove Theorem~\ref{Thm:StabilityDDE:simplify}.
Let $\mathcal{I}_{\!1}$ and $\mathcal{I}_{\!2}$ be defined by \eqref{Eq:I1} and \eqref{Eq:I2}.
Recall that $\mathcal{I}_{\!1}$ only applies when $\frac{\delta  + \hat u}{D_1\delta}\geq r$,
in which case the integration does not have to be split into two intervals. For this case to occur we require
$\hat u\in[(r D_1-1)\delta,-\mu\delta/\sigma]$, which
is only possible in the region where $r D_1-1 \leq -\mu/\sigma$.
When $\frac{\delta  + \hat u}{D_1\delta}<r$ the integration has to be broken into two parts
and $\mathcal{I}_{\!2}$ applies. In that case
$\hat u \in \bigl[-\delta,\min \bigl\{(rD_1-1)\delta,-\mu\delta/\sigma\bigr\}\bigr]$.
We require the following lemmas.

\begin{lem}\label{Lem:StabilityDDE:tauD-1<-mu/sigma}
Let $a>0$, $c\ne 0$, $\sigma\leqslant \mu$ and $\sigma<-\mu$. Let $\delta\in(0,|a/c|)$ and $rD_1-1\leqslant -\mu/\sigma$.
If $\mu\geqslant 0$ then $\sigma\geqslant-1/r$.
If $\mu<0$ then $\mu\in\bigl[(-3+2\sqrt{2})/r,0\bigr]$ and
\begin{equation} \label{eq:lemmaB1}
\sigma \geqslant  - \tfrac{1}{r}\biggl[ \tfrac12(1 + \mu r)
+\tfrac{1}{2}\sqrt{1 + 6\mu r+(\mu r)^2}\biggr]\geqslant -\tfrac{1}{r}.
\end{equation}
\end{lem}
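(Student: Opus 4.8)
The plan is to reduce the hypothesis $rD_1 - 1 \leq -\mu/\sigma$ to an inequality involving only $r$, $\mu$ and $\sigma$, and then analyse it as a polynomial inequality in $|\sigma|$. The key observation is that, by \eqref{Eq:D1}, the factor $1 + (|\mu|+|\sigma|)|c|\delta$ is at least $1$, so $D_1 \geq |\mu| + |\sigma|$; since $r > 0$ this yields $r(|\mu|+|\sigma|) - 1 \leq rD_1 - 1 \leq -\mu/\sigma$. Because the conclusions are stated purely in terms of $r$, discarding the extra factor in $D_1$ in this way costs nothing. I would also record at the outset that $\sigma < 0$ throughout, which follows from $\sigma \leq \mu$ together with $\sigma < -\mu$, so that multiplying by $|\sigma| = -\sigma > 0$ is sign-safe.

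For the case $\mu \geq 0$ I would clear the denominator by multiplying $r(\mu+|\sigma|) - 1 \leq -\mu/\sigma = \mu/|\sigma|$ through by $|\sigma|$, and then spot the factorisation $(r|\sigma|-1)(\mu+|\sigma|) \leq 0$. Since $\mu \geq 0$ and $|\sigma|>0$ give $\mu + |\sigma| > 0$, the remaining factor must be nonpositive, i.e. $r|\sigma| \leq 1$, which is exactly $\sigma \geq -1/r$.

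For $\mu < 0$ I would set $m = -\mu > 0$ and $s = -\sigma > 0$ (with $s \geq m$ coming from $\sigma \leq \mu$), so that the reduced inequality becomes the quadratic condition $g(s) := rs^2 + (rm-1)s + m \leq 0$. The existence of an admissible $s > 0$ forces the discriminant $r^2m^2 - 6rm + 1 \geq 0$, whose roots in $rm$ are $3 \pm 2\sqrt2$. The branch $rm \geq 3 + 2\sqrt2$ is excluded because there $rm - 1 > 0$, making all coefficients of $g$ positive and $g(s) > 0$ for every $s>0$; this leaves $rm \leq 3 - 2\sqrt2$, which rearranges to the claimed bound $\mu \in [(-3+2\sqrt2)/r, 0]$. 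Then $g(s) \leq 0$ places $s$ below the larger root $s_+ = \big[(1-rm) + \sqrt{r^2m^2 - 6rm + 1}\,\big]/(2r)$; rewriting $1 - rm = 1 + \mu r$ and $r^2m^2 - 6rm + 1 = 1 + 6\mu r + (\mu r)^2$ turns $\sigma = -s \geq -s_+$ into the first inequality of \eqref{eq:lemmaB1}. The final inequality $-s_+ \geq -1/r$ reduces to $\sqrt{1 + 6\mu r + (\mu r)^2} \leq 1 - \mu r$; both sides are nonnegative (as $\mu r < 0$), and squaring collapses this to $8\mu r \leq 0$, which holds.

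I expect the $\mu < 0$ case to be the only real obstacle: one must correctly identify the upper root and, crucially, exclude the spurious discriminant branch $rm \geq 3 + 2\sqrt2$. The sign argument above (positivity of all coefficients of $g$) disposes of it cleanly, so once that is in place the remaining manipulations are routine algebra. The $\mu \geq 0$ case is immediate once the factorisation is noticed.
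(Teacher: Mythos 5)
Your proof is correct and follows essentially the same route as the paper's: both replace $D_1$ by the lower bound $|\mu|+|\sigma|$, clear the denominator in $rD_1-1\leqslant-\mu/\sigma$ to obtain the quadratic inequality $r\sigma^2+(1-\mathrm{sign}(\mu)\mu r)\sigma-\mu\leqslant0$, and then read the conclusions off the roots (your factorisation $(r|\sigma|-1)(\mu+|\sigma|)\leqslant0$ for $\mu\geqslant0$ is just the explicit form of those roots). The one place you are actually more explicit than the paper is the exclusion of the discriminant branch $-\mu r\geqslant3+2\sqrt{2}$ via positivity of all coefficients of $g$, a step the paper passes over without comment.
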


\begin{proof}
Let $r D_1-1 \leq -\mu/\sigma$. Then
\begin{gather} \notag
r(\text{sign}(\mu)\mu-\sigma)- 1 = r(|\mu|+|\sigma|)- 1 \leqslant rD_1 - 1 \leqslant  - \tfrac{\mu}{\sigma},\\
 \Rightarrow \quad
r\sigma ^2  + (1-\text{sign}(\mu)\mu r)\sigma  - \mu  \leqslant 0.
\label{Eq:tauD-1<-mu/sigma1}
\end{gather}
The boundary of the region where this inequality holds is
\begin{equation}
\sigma  =  - \tfrac{1}{r}\Bigl[
\tfrac{1}{2}({1-\text{sign}(\mu)\mu r}) \pm \tfrac{1}{2}\sqrt{(1-\text{sign}(\mu)\mu r)^2  + 4\mu r}
\Bigr].
\label{Eq:tauD-1<-mu/sigma2}
\end{equation}
If $\mu\geqslant 0$ then this simplifies to $\sigma=-1/r$ and $\sigma=\mu$. Since $\sigma\leqslant 0$, and also $\mu=\sigma=0$ satisfies \eqref{Eq:tauD-1<-mu/sigma1} it follows that \eqref{eq:lemmaB1} holds
for $\sigma\in[-1/r,0]$ for the case $\mu\geqslant 0$.

If $\mu<0$ then \eqref{Eq:tauD-1<-mu/sigma2} simplifies to
\begin{equation}
\sigma  =
- \tfrac{1}{r}\Bigl[ {\tfrac{1}{2}(1 + \mu r)\pm\tfrac{1}{2}\sqrt{1 + 6\mu r +(\mu r)^2 } } \Bigr].
\label{Eq:tauD-1<-mu/sigma3}
\end{equation}
Requiring $1+6\mu r+(\mu r)^2\geqslant 0$ yields $\mu r\in[-3+2\sqrt{2},0]$. The lower bound on $\sigma$ can be found by taking the lower boundary in \eqref{Eq:tauD-1<-mu/sigma3} which attains its minimum at $\mu=0$. This yields
\eqref{eq:lemmaB1}.
\end{proof}

\begin{lem} \label{Lem:StabilityDDE:simplify}
Let $a>0$, $\sigma\leqslant \mu$ and $\sigma<-\mu$. Let $\delta\in(0,|a/c|)$. Define
\[
\hat u_* = \left[ \tfrac{1}{\mu}D_1\ln \left( 1 - \tfrac{\mu}{\sigma}e^{\mu r} \right) - 1 \right]\delta.
\]
The following statements are true:
\begin{enumerate}[(A)]
\item
If $rD_1 - 1 >  -\mu/\sigma$
then the maximum of $\mathcal {I}_{\!2}(\hat u ,\delta)$ over $\hat u \in[-\delta,-\mu\delta/\sigma]$
occurs at $\hat u _*$ if $\hat u_*< -\mu\delta/\sigma$, and at $-\mu\delta/\sigma$ otherwise.
\item
If $rD_1 - 1 > -\mu/\sigma$ and $u_*< -\mu\delta/\sigma$ then $P(\delta,c,2)\geqslant \delta$.
\item
If $rD_1 - 1 \leqslant -\mu/\sigma$ then
$\sup_ {\hat u \in [-\delta,(rD_1-1)\delta]}\mathcal{I}_{\!2}(\hat u ,\delta)
=\mathcal{I}_{\!2}((rD_1-1)\delta,\delta)$
\item
If $rD_1 - 1 \leqslant -\mu/\sigma$ then
$\sup_ {\hat u \in [(rD_1-1)\delta,-\mu\delta/\sigma]}\mathcal{I}_{\!1}(\hat u ,\delta)
=\mathcal{I}_{\!1}(-\mu\delta/\sigma,\delta)$
\item
If $rD_1 - 1 \leqslant -\mu/\sigma$ then
$\mathcal{I}_{\!2}((rD_1-1)\delta,\delta)\leqslant \mathcal{I}_{\!1}(-\mu\delta/\sigma,\delta)$.
\end{enumerate}
\end{lem}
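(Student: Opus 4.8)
The plan is to study the single function $\hat u\mapsto\mathcal{I}(\hat u,\delta,|c|,2)$ on $[-\delta,-\mu\delta/\sigma]$, which is assembled from the branch $\mathcal{I}_{\!2}$ (for $\hat u<\hat u_b$) and the branch $\mathcal{I}_{\!1}$ (for $\hat u\geq\hat u_b$), where $\hat u_b=(rD_1-1)\delta$ is the value at which $\tfrac{\delta+\hat u}{D_1\delta}=r$; here $r=a+|c|\delta$ and I take $\mu\neq0$, the case $\mu=0$ following from the limits noted after \eqref{Eq:I2}. First I would record three facts obtained by differentiating \eqref{Eq:I1}--\eqref{Eq:I2}: $\mathcal{I}_{\!1}(\cdot,\delta)$ is affine in $\hat u$ with slope $A:=\tfrac1\mu[(\mu+\sigma)e^{\mu r}-\sigma]$; $\mathcal{I}_{\!2}(\cdot,\delta)$ is strictly concave, since $\partial^2_{\hat u}\mathcal{I}_{\!2}=\tfrac{\sigma}{D_1\delta}e^{\mu(\delta+\hat u)/(D_1\delta)}<0$, with unique stationary point $\hat u_*$ characterised by $e^{\mu(\delta+\hat u_*)/(D_1\delta)}=1-\tfrac\mu\sigma e^{\mu r}$; and at $\hat u_b$ the two branches match to first order, $\mathcal{I}_{\!1}(\hat u_b,\delta)=\mathcal{I}_{\!2}(\hat u_b,\delta)$ and $\partial_{\hat u}\mathcal{I}_{\!2}(\hat u_b,\delta)=A$. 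Since its derivative is then non-increasing across the interval, $\mathcal{I}(\cdot,\delta,|c|,2)$ is $C^1$ and concave, and this is what drives all five parts.

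For part (A) I would note that when $rD_1-1>-\mu/\sigma$ the whole interval lies in the $\mathcal{I}_{\!2}$ branch, so by concavity the maximiser is $\hat u_*$ if $\hat u_*$ is interior and the right endpoint $-\mu\delta/\sigma$ otherwise; the only remaining point is $\hat u_*\geq-\delta$, i.e. $\tfrac{D_1}\mu\ln(1-\tfrac\mu\sigma e^{\mu r})\geq0$, which is a one-line sign check done separately for $\mu>0$ and $\mu<0$ using $\sigma<0$.

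For part (B) I would substitute the stationarity relation $e^{\mu(\delta+\hat u_*)/(D_1\delta)}=1-\tfrac\mu\sigma e^{\mu r}$ into \eqref{Eq:dI/dtau2}; this collapses the exponential and logarithmic terms and leaves $\mathcal{I}_{\!2}(\hat u_*,\delta)/\delta$ as an explicit expression in $e^{\mu r}$, $\hat u_*/\delta$, $D_1$, $\mu$, $\sigma$, and the goal is to show it is $\geq1$ (whence $P(\delta,c,2)=\mathcal{I}_{\!2}(\hat u_*,\delta)\geq\delta$ by part (A)). I expect this to be the main obstacle, because the inequality genuinely couples $\mu,\sigma,r,D_1$ and admits no clean factorisation; my approach would be to reduce it, using the hypotheses $\hat u_*<-\mu\delta/\sigma$, $\sigma\leq\mu$ and $\sigma<-\mu$, to a scalar inequality and then close it with elementary $\exp/\log$ estimates.

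The key to (C)--(E) is the single claim that $A\geq0$ whenever $rD_1-1\leq-\mu/\sigma$, which I would prove from Lemma~\ref{Lem:StabilityDDE:tauD-1<-mu/sigma}. For $\mu>0$ that lemma gives $|\sigma|r\leq1$, hence $\mu r\leq\mu/|\sigma|\leq-\ln(1-\mu/|\sigma|)$ (the last step from $-\ln(1-x)\geq x$), which rearranges to $A\geq0$. For $\mu<0$, $A\geq0$ is equivalent to $e^{-\mu r}-1\leq|\mu|/|\sigma|$; writing $p=\mu r\in[-3+2\sqrt{2},0]$ and $\rho(p)=\tfrac12(1+p)+\tfrac12\sqrt{1+6p+p^2}$, the sharper bound $|\sigma|r\leq\rho(p)$ from \eqref{eq:lemmaB1} gives $|\mu|/|\sigma|=(-p)/(|\sigma|r)\geq(-p)/\rho(p)$, so it suffices to verify the one-variable inequality $e^{-p}-1\leq(-p)/\rho(p)$ on $[-3+2\sqrt{2},0]$ (equality at $p=0$, strict at the left end, finished by a monotonicity check). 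Granting $A\geq0$, the rest follows from the preliminary facts: (D) holds because $\mathcal{I}_{\!1}$ is affine with non-negative slope, so its supremum over $[(rD_1-1)\delta,-\mu\delta/\sigma]$ is attained at the right endpoint; (C) holds because the concave $\mathcal{I}_{\!2}$ has derivative $A\geq0$ at $\hat u_b$ and hence derivative $\geq A\geq0$ throughout $[-\delta,\hat u_b]$, so it is non-decreasing there and maximised at $\hat u_b$; and (E) is then immediate from the value-matching $\mathcal{I}_{\!2}(\hat u_b,\delta)=\mathcal{I}_{\!1}(\hat u_b,\delta)$ combined with (D).
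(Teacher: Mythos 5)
Your treatment of parts (A), (C), (D) and (E) is correct and is essentially the paper's own argument in mildly streamlined form: the paper likewise works from the derivative formula \eqref{Eq:I2derivative}, uses that it is decreasing in $\hat u$ (your concavity observation), establishes $e^{\mu r}+\tfrac{\sigma}{\mu}(e^{\mu r}-1)\geq 0$ (your $A\geq 0$) under $rD_1-1\leq-\mu/\sigma$ via Lemma~\ref{Lem:StabilityDDE:tauD-1<-mu/sigma}, and then reads off (C), (D), (E). Your $\mu>0$ case is a direct argument from $|\sigma|r\leq 1$ and $-\ln(1-x)\geq x$ where the paper argues by contraposition through the monotonicity of $-xe^{x}/(1-e^{x})$, and your (E) uses $C^{1}$ value-matching at $\hat u_b=(rD_1-1)\delta$ where the paper computes the difference explicitly and factors it as $\bigl(-\mu/\sigma-(rD_1-1)\bigr)\delta\bigl[e^{\mu r}+\tfrac{\sigma}{\mu}(e^{\mu r}-1)\bigr]$; the one-variable inequality $e^{-p}-1\leq(-p)/\rho(p)$ you reduce the $\mu<0$ case to is exactly the inequality the paper invokes, in the equivalent form $-xe^{x}/(1-e^{x})\geq\rho(x)$. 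These differences are cosmetic.

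The genuine gap is part (B), which you state as a goal but do not prove, and which is the hardest step of the lemma. Two ingredients are missing. First, one must show that the hypotheses of (B) force $\mu<0$: the paper proves that if $\mu>0$ and $\hat u_*<-\mu\delta/\sigma$, then a chain of estimates (starting from $\tfrac{\partial}{\partial\hat u}\mathcal{I}_{\!2}(-\mu\delta/\sigma,\delta)<0$, using $D_1/\mu\geq 1-\sigma/\mu$, the exponent bound $\mu(1-\mu/\sigma)/D_1\leq-\mu/\sigma$, and the scalar inequalities $(1-e^{-x})/x>1$ and $(1-1/x)\ln[(1-e^{-x})/x]-1\leq-x$ for $x=\mu/\sigma\in(-1,0)$) yields $rD_1-1\leq-\mu/\sigma$, contradicting the hypothesis of (B). This dichotomy is not a technicality: the second ingredient only works when $\mu<0$, because then the coefficient $e^{\mu r}-\sigma/\mu$ multiplying $\hat u_*$ in \eqref{Eq:I2} is negative, so $\hat u_*<-\mu\delta/\sigma$ can be used to bound $\hat u_*\bigl(e^{\mu r}-\tfrac{\sigma}{\mu}\bigr)$ from below; combining this with the stationarity relation $e^{\mu(\delta+\hat u_*)/(D_1\delta)}-1=-\tfrac{\mu}{\sigma}e^{\mu r}$ gives
\begin{equation*}
\mathcal{I}_{\!2}(\hat u_*,\delta)\;\geq\;\delta-\Bigl(\tfrac{D_1}{\mu}+\tfrac{\mu}{\sigma}+\tfrac{\sigma}{\mu}\Bigr)\delta e^{\mu r},
\end{equation*}
after which the bound $D_1\geq|\mu|+|\sigma|\geq-(\mu^{2}/\sigma+\sigma)$ shows the bracket is nonpositive, whence $\mathcal{I}_{\!2}(\hat u_*,\delta)\geq\delta$. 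Neither the exclusion of $\mu>0$ nor the bound on $D_1$ appears in your proposal, and the ``elementary $\exp/\log$ estimates'' you defer to will not close the inequality without them; for $\mu>0$ the paper's route shows the desired bound simply cannot be reached that way, since the case is vacuous. As written, (B) remains unproven.
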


\begin{proof}[Proof of (A)]
To find $\hat u _*$ which maximises $\mathcal{I}_{\!2}(\hat u_*,\delta)$
consider the derivative
\begin{equation} \label{Eq:I2derivative}
\tfrac{\partial}{\partial \hat u}\mathcal{I}_{\!2}(\hat u,\delta) =
e^{\mu r}  + \tfrac{\sigma}{\mu} \Bigl( e^{\mu\tfrac{\delta  + \hat u}{D_1\delta}} - 1\Bigr).
\end{equation}
At $\hat u = -\delta$ this is positive. Setting the derivative equal to zero in \eqref{Eq:I2derivative} yields
$$\hat u_* = \left[ {\tfrac{1}{\mu}D_1\ln\left( 1 - \tfrac{\mu}{\sigma}e^{\mu r}  \right) - 1} \right]\delta.$$
Since ${1 - \tfrac{\mu } {\sigma }e^{\mu r} } \in \left[0,1\right]$ if $\mu<0$ and ${1 - \tfrac{\mu } {\sigma }e^{\mu r} } >1$ if $\mu>0$ then $\hat u_*>-\delta$ in both cases.
\end{proof}

\begin{proof}[Proof of (B)]
First we need to prove that if $\hat u_*<-\mu\delta/\sigma$ and $\mu>0$ then $rD_1 - 1 \leqslant -\mu/\sigma$.
Let $\hat u_*<-\mu\delta/\sigma$ and $\mu>0$. Then
$\frac{\partial}{\partial \hat u}\mathcal{I}_{\!2}(-\mu\delta/\sigma,\delta) <0$.
Consider the term $D_1/\mu$,
\begin{equation}
\frac{D_1}{\mu}
= \frac{|\mu| + |\sigma|}{\mu}\bigl( 1 + (|\mu| + |\sigma|)|c|\delta\bigr)
\geqslant \frac{|\mu| + |\sigma|}{\mu}
= \left( 1 - \frac{\sigma}{\mu} \right).
\label{Eq:epsD/mu}
\end{equation}
Now consider the exponent of the second term in \eqref{Eq:I2derivative} with $\hat u=-\mu\delta/\sigma$,
\[
\mu\frac{1 - \mu/\sigma}{D_1}
= \left( 1 - \frac{\mu}{\sigma} \right)\frac{\mu}{D_1}
\leqslant \frac{1 - \mu/\sigma}{1 -\sigma/\mu}
=  - \frac{\mu}{\sigma}.
\]
Thus,
\[
e^{\mu r}  + \tfrac{\sigma}{\mu}(e^{-\mu/\sigma} - 1)
\leqslant
\tfrac{\partial}{\partial \hat u}\mathcal{I}_{\!2}(-\mu\delta/\sigma,\delta) <0.
\]
Isolating $r$ in this expression yields
$r<\tfrac{1}{\mu}\ln \bigl[ \tfrac{\sigma}{\mu}\bigl( 1 - e^{-\mu/\sigma}\bigr) \bigr] $.
Let $x=\mu/\sigma$, then $x \in (-1,0)$
and $(1-e^{-x})/x>1$.
Also, $(1-1/x)\ln[(1-e^{-x})/x]-1\leqslant -x$. These inequalities and \eqref{Eq:epsD/mu} imply
\[
r D_1 - 1 \leqslant
\tfrac{D_1}{\mu}\ln \left[ \tfrac{\sigma}{\mu}(1 - e^{-\mu/\sigma}) \right] - 1
\leqslant \left( 1 - \tfrac{\sigma}{\mu} \right)\ln\left[\tfrac{\sigma}{\mu}(1 - e^{-\mu/\sigma}) \right] - 1
\leqslant  - \tfrac{\mu}{\sigma}.
\]
Thus if $r D_1 - 1 > -\mu/\sigma$ then $\hat u_*<-\mu\delta/\sigma$ can only occur if $\mu<0$.

Now let $\mu<0$ and $\hat u_*<-\mu\delta/\sigma$. Then by setting $ \tfrac{{\partial }}{{\partial \hat u}} \mathcal{I}_{\!2}(\hat u_*,\delta)=0$ in \eqref{Eq:I2derivative},
$e^{\mu \tfrac{\delta  + \hat u_*}{D_1\delta }}  - 1= -\tfrac{\mu}{\sigma} e^{\mu r}  $.
Also, $e^{\mu r}-\sigma/\mu<0$ because $\mu<0$ and $\sigma\leqslant \mu<0$. Thus,
\begin{align*}
\mathcal{I}_{\!2}(\hat u_* ,\delta) & =
\hat u _*\left(e^{\mu r}-\tfrac{\sigma}{\mu}\right) + \tfrac{\sigma}{\mu}\delta
\biggl[\tfrac{D_1}{\mu}\Bigl( e^{\mu\tfrac{\delta  + \hat u_*}{D_1\delta }}  - 1 \Bigr) - e^{\mu r}\biggr],\\
& \geqslant -\tfrac{\mu}{\sigma} \delta\left(e^{\mu r}-\tfrac{\sigma}{\mu}\right)
+ \tfrac{\sigma}{\mu}\delta \left[ -\tfrac{D_1}{\sigma}e^{\mu r}- e^{\mu r} \right]
=\delta-\left(\tfrac{D_1}{\mu } + \tfrac{\mu}{\sigma} + \tfrac{\sigma}{\mu}\right)\delta e^{\mu r}.
\end{align*}
But $D_1 \geq |\mu| +|\sigma| \geqslant |\mu/\sigma| |\mu| + |\sigma|
= - (\mu^2/\sigma + \sigma)$ which implies
$\tfrac{D_1}{\mu} + \tfrac{\mu}{\sigma} + \tfrac{\sigma}{\mu} \leqslant 0$.
Hence $\mathcal{I}_{\!2}(\hat u_* ,\delta)\geq\delta$ as required.
\end{proof}

\begin{proof}[Proof of (C)]
Let $\frac{\partial}{\partial \hat u}\mathcal{I}_{\!2}((rD_1-1)\delta,\delta)<0$. Then,
$e^{\mu r}  + \tfrac{\sigma}{\mu}( e^{\mu r}  - 1 ) < 0$,
which can be rewritten as
\begin{equation} \label{Eq:FullIntegration,Expr1<0sigma}
\sigma  <\frac{{\mu e^{\mu r} }}
{1 - e^{\mu r} }=-\frac{1}{r}\left(\frac{-\mu r e^{\mu r} }
{1 - e^{\mu r} }\right).
\end{equation}
We show that the expression on the right-hand-side is continuous and decreases as $\mu$ increases.
Let $x=\mu r$, then
\begin{gather*}
\mathop{\lim}\limits_{\mu\to 0} -\frac{1}{r}\left(\frac{-\mu r e^{\mu r}}{1 - e^{\mu r} }\right)
= \mathop{\lim}\limits_{x\to 0} -\frac{1}{r}\left(\frac{-x e^{x}}{1 - e^{x}}\right)=  - \frac{1}{r},\\
\frac{d}
{{d\mu }}\left[-\frac{1}{r}\left(\frac{-\mu r e^{\mu r} }
{1 - e^{\mu r} }\right)\right]=\frac{d}
{{dx }}\frac{x e^{x} }
{1 - e^{x} } =\frac{e^x\left(1+x-e^x\right)}{\left(1-e^x\right)^2}\leqslant 0.
\end{gather*}
So when $\mu>0$, a necessary condition for
\eqref{Eq:FullIntegration,Expr1<0sigma}
to hold is $\sigma  < - 1/r$.
From Lemma~\ref{Lem:StabilityDDE:tauD-1<-mu/sigma}, if $rD_1-1 \leqslant -\mu/\sigma$ and $\mu>0$
then $\sigma\geqslant -1/r$. Thus
$ \tfrac{\partial  }{{\partial \hat u}}\mathcal{I}_{\!2}((rD_1-1)\delta,\delta)\geqslant 0$ if $rD_1-1 \leqslant -\mu/\sigma$ and $\mu>0$.

Now let $rD_1-1 \leqslant -\mu/\sigma$ and $\mu<0$. From Lemma~\ref{Lem:StabilityDDE:tauD-1<-mu/sigma},
$\mu r\in[-3+2\sqrt{2},0]$ and
$\sigma \geqslant  - \frac{1}{{r }}\left[ {\frac{1}{2}\left( {1 + \mu r} \right)- \frac{1}
{2}\sqrt {1 + 6\mu r+ (\mu r)^2 } } \right]$.
Since $\tfrac{-x e^{x}}{1 - e^{x}} \geqslant \tfrac{1}
{2}\left( {1 + x} \right)+ \tfrac{1}
{2}\sqrt {1 + 6x + x^2 }  $ for $x\in[-3+2\sqrt{2},0]$, then
$\sigma \geqslant -\tfrac{1}{r}\left(\tfrac{-\mu r e^{\mu r} }
{1 - e^{\mu r} }\right)$. This contradicts \eqref{Eq:FullIntegration,Expr1<0sigma}. Thus $ \tfrac{{\partial  }}
{{\partial \hat u}}\mathcal{I}_{\!2}((rD_1-1)\delta,\delta)\geqslant 0$ if $rD_1-1 \leqslant -\mu/\sigma$
and $\mu<0$. Thus,
\begin{equation} \label{Eq:tauD-1<-mu/sigma-->der>0}
rD_1-1 \leqslant -\tfrac{\mu}{\sigma} \quad \Rightarrow \quad
\tfrac{\partial}{\partial \hat u}\mathcal{I}_{\!2}((rD_1-1)\delta,\delta)
=e^{\mu r}  + \tfrac{\sigma}{\mu}\left( {e^{\mu r}  - 1} \right) \geqslant 0.
\end{equation}
This is shown 
in Figure~\ref{Fig:tauD-1<-mu/sigma-->der>0}. To finish 
the proof of (C), observe from \eqref{Eq:I2derivative} that 
$\tfrac{\partial}{\partial \hat u}\mathcal{I}_{\!2}(\hat u,\delta)$
decreases as $\hat u$ increases. Then by \eqref{Eq:tauD-1<-mu/sigma-->der>0},
$\sup_{\hat u \in[-\delta,(rD_1-1)\delta]}\mathcal{I}_{\!2}(\hat u ,\delta)=\mathcal{I}_{\!2}((rD_1-1)\delta,\delta)$.
\end{proof}

\begin{figure}[t]
\begin{center}
\includegraphics[scale=1]{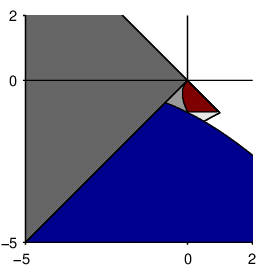}
\put(-23,-2){{\footnotesize $\mu$}}
\put(-130,102){{\footnotesize $\sigma$}}
\FigureCaption{\label{Fig:tauD-1<-mu/sigma-->der>0} Region where $rD_1-1\leqslant -\mu/\sigma$ is shown in brown and the region where $e^{\mu r}  + (\sigma/\mu)( e^{\mu r}  - 1 ) <0 $ is shown in blue.
These two regions do not intersect.}
\end{center}
\end{figure}

\begin{proof}[Proof of (D)]
Let $rD_1-1 \leqslant -\mu/\sigma$. For all $\hat u \in \bigl[(rD_1-1)\delta,-\mu\delta/\sigma\bigr]$,
$$\tfrac{\partial}{\partial \hat u}\mathcal{I}_{\!1}(\hat u ,\delta)
= e^{\mu r}  + \tfrac{\sigma}{\mu}(e^{\mu r}  - 1)
=\tfrac{\partial}{\partial \hat u}\mathcal{I}_{\!2}((rD_1-1)\delta,\delta).$$
From \eqref{Eq:tauD-1<-mu/sigma-->der>0},
$\frac{\partial}{\partial \hat u}\mathcal{I}_{\!1}(\hat u ,\delta)>0$
for all $\hat u\in [-\delta,(rD_1-1)\delta]$. Thus,
$\sup_{\hat u\in [(rD_1-1)\delta, -\mu\delta/\sigma]}\mathcal{I}_{\!1}(\hat u,\delta)
= \mathcal{I}_{\!1}(-\mu\delta/\sigma,\delta)$.
\end{proof}

\begin{proof}[Proof of (E)]
From \eqref{Eq:I1} and \eqref{Eq:I2} we find that
$$
\mathcal{I}_{\!1}(-\mu\delta/\sigma,\delta)-\mathcal{I}_{\!2}((rD_1 - 1)\delta ,\delta)
= \left(  - \tfrac{\mu}{\sigma} - ( rD_1 - 1) \right)\delta \left[ e^{\mu r}  + \tfrac{\sigma}{\mu}( e^{\mu r}  - 1) \right]. $$
For $rD_1-1 \leqslant -\mu/\sigma$ it follows from \eqref{Eq:tauD-1<-mu/sigma-->der>0}
that this expression is non-negative. Thus,
$\mathcal{I}_{\!1}(-\mu\delta/\sigma,\delta) \geqslant \mathcal{I}_{\!2}((rD_1 - 1)\delta,\delta)$.
\end{proof}

\noindent
Finally we can prove Theorem~\ref{Thm:StabilityDDE:simplify}.


\begin{proof}[Proof of Theorem~\ref{Thm:StabilityDDE:simplify}]
Note that this expression does not hold outside of $\bigl\{P(\delta,c,2)<\delta\bigr\}$.
In order to prove this theorem, we need items (A)-(E) in Lemma~\ref{Lem:StabilityDDE:simplify} which require Lemma~\ref{Lem:StabilityDDE:tauD-1<-mu/sigma}.

Let $r D_1-1>-\mu/\sigma$. Then we can only have the two-part integration so
$\mathcal{I}(\hat u,\delta,c,2)=\mathcal{I}_{\!2}(\hat u,\delta)$. From (A) and (B), $P(\delta,c,2)=\mathcal{I}_{\!}\bigl(-\mu/\sigma,\delta\bigr)$ if $P(\delta,c,2)<\delta$.

Let $r D_1-1\leq-\mu/\sigma$. Then we can have either the one-part or the two-part integration. From (C) and (D), $\mathcal{I}(\hat u,\delta,c,2)
=\max\bigl\{\mathcal{I}_{\!2}((rD_1-1)\delta,\delta),\mathcal{I}_{\!1}(-\mu\delta/\sigma,\delta)\bigr\}$.
From (E), $P(\delta,c,2)=\mathcal{I}_{\!1}(-\mu\delta/\sigma,\delta\bigr)$.
\end{proof}

\end{document}